\DeclareSymbolFontAlphabet{\mathbbl}{AMSb}
\DeclareSymbolFontAlphabet{\mathbb}{bbold}
\newcommand{\R}{\mathbb{R}}
\newcommand{\V}{\mathcal{V}}
\newcommand{\F}{\mathcal{F}}
\newcommand{\E}{\mathcal{E}}
\newcommand{\VV}{\mathbb{V}}
\newcommand{\EE}{\mathbb{E}}
\newcommand{\TT}{\mathbb{T}}
\newcommand{\LL}{\mathbb{L}}
\renewcommand{\AA}{\mathbb{A}}
\newcommand{\PP}{\mathbb{P}}
\newcommand{\dd}{\mathbb{d}}
\newcommand{\WW}{\mathbb{W}}
\newcommand{\CC}{\mathbb{C}}
\newcommand{\RR}{\mathbb{R}}
\newcommand{\DD}{\mathbb{D}}
\newcommand{\bb}{\mathbb{b}}
\newcommand{\pp}{\mathbb{p}}
\newcommand{\xx}{\mathbb{x}}
\newcommand{\yy}{\mathbb{y}}
\newcommand{\vv}{\mathbb{v}}
\newcommand{\ww}{\mathbb{w}}
\newcommand{\zz}{\mathbb{z}}
\newcommand{\ee}{\mathbb{e}}
\newcommand{\ff}{\mathbb{f}}
\begin{document}

\title{A New Approach to Laplacian Solvers and Flow Problems}

\author{\name Patrick Rebeschini \email patrick.rebeschini@stats.ox.ac.uk\\
       \addr Department of Statistics\\
       University of Oxford\\
       Oxford, OX1 3LB, UK
       \AND
       \name Sekhar Tatikonda \email sekhar.tatikonda@yale.edu \\
       \addr Department of Statistics and Data Science\\
       Yale University\\
       New Haven, CT 06511, USA}

\editor{}

\maketitle

\thispagestyle{plain}

\begin{abstract}%
This paper investigates the behavior of the Min-Sum message passing scheme to solve systems of linear equations in the Laplacian matrices of graphs and to compute electric flows. Voltage and flow problems involve the minimization of quadratic functions and are fundamental primitives that arise in several domains.
Algorithms that have been proposed are typically centralized and involve multiple graph-theoretic constructions or sampling mechanisms that make them difficult to implement and analyze. On the other hand, message passing routines are distributed, simple, and easy to implement. In this paper we establish a framework to analyze Min-Sum to solve voltage and flow problems. We characterize the error committed by the algorithm on general weighted graphs in terms of hitting times of random walks defined on the computation trees that support the operations of the algorithms with time. For $d$-regular graphs with equal weights, we show that the convergence of the algorithms is controlled by the total variation distance between the distributions of non-backtracking random walks defined on the original graph that start from neighboring nodes.
The framework that we introduce extends the analysis of Min-Sum to settings where the contraction arguments previously considered in the literature (based on the assumption of walk summability or scaled diagonal dominance) can not be used, possibly in the presence of constraints.
\end{abstract}

\begin{keywords}
Min-Sum, message passing, decentralized algorithms, Laplacian solver, network flows.
\end{keywords}

\section{Introduction}\label{sec:introduction}

\underline{Voltage and flow problems.} Laplacian matrices of graphs are fundamental. Solving linear systems of equations in these matrices constitutes the back-bone of many algorithms with applications to a wide variety of fields.
See \cite{S11,V13}. 
When a weighted graph is viewed as an electric circuit, where weighted edges represent resistors and vertices represent nodes to which some external current is applied, solving linear systems of equations in the Laplacian matrix of the graph corresponds to computing the voltages induced to each node by the external currents. For this reason, we refer to the Laplacian problem as the ``voltage problem." Along with the problem of computing the voltages is the problem of computing the current induced across each resistor, which we refer to as the ``flow problem." The two problems are related by Ohm's law---which says that electric flows are induced by voltage differences across the nodes---and they both can be cast in the form of quadratic optimization problems. The voltage problem is an unconstrained problem to find the voltage assignment that maximizes the quadratic Laplacian form. The flow problem is a constrained problem to find the flow assignment that minimizes the circuit energy and satisfies Kirchhoff's law. The voltage problem is the dual of the flow problem. 

\underline{Previous literature on Laplacian solvers.} The extensive literature on approximately solving the voltage and flow problems typically involves algorithms that rely on multiple graph-theoretic constructions or sampling mechanisms which make them difficult to implement and analyze. The first quasi-linear time solver was given in \cite{Spielman:2004,ST14}, and it achieves a running time of $\tilde O(m\log^c n \log 1/\varepsilon)$ in the Laplacian-modified $\ell_2$ norm. Here, $n$ and $m$ are, respectively, the number of vertices and edges in the graph, $c$ is a constant close to $100$, and the $\tilde O$ notation hides $\log\log n$ factors.
Since this result, many algorithms have been proposed and showed to yield smaller values of $c$. The work in \cite{KMP11} has reduced the constant $c$ to $1$. Currently, the best running time is in \cite{Cohen:2014:SSL:2591796.2591833} where $c=1/2$. These algorithms all rely on the same general architecture, which combines the Chebyshev method or the conjugate gradient algorithm with the use of recursive preconditioning, spectral sparsifiers, low-stretch spanning trees, and expander graphs.
Despite their theoretical guarantees, these algorithms present noticeable difficulties to practitioners due to the involved machinery they rely upon.
Lately, new algorithms have been proposed that depart from this framework and use fewer graph-theoretic constructions. The method in \cite{KOSZ13}, for instance, which operates in the flow domain and yields a fast solver with $c=2$, using very little of the previous machinery---although it still requires low-stretch spanning trees.
More recently, the work in \cite{KS16}, which relies on sparse Gaussian elimination for Laplacian matrices to yield a fast solver with $c=3$, without the use of any graph-theoretic construction. While more appealing, these new algorithms still rely on random sampling so the convergence analysis is only performed in expectation, or with high probability.
Moreover, all these algorithms are centralized. There has also been some work in parallelizing these solvers, with the first parallel solver with near linear work and poly-logarithmic depth presented in \cite{PS14parallel}. These solvers work in the shared memory model and are not completely distributed.

\underline{Min-Sum algorithm.} The aforementioned limitations prompted us to investigate the behavior of the Min-Sum algorithm to solve both the voltage and the flow problem. Min-Sum is a distributed, deterministic (i.e., not randomized), general-purpose message passing algorithm that is used to optimize objective functions that are sums of component functions supported on a given graph.
Min-Sum has emerged as a canonical procedure to address large scale problems in a variety of domains.
Its success comes from the fact that the algorithm is simple and easy to implement, require minimal data structures and low communication per iteration, and in many applications it has been shown to converge to the problem solution. Interest in Min-Sum has been triggered by its success in solving certain classes of challenging combinatorial optimization problems, such as decoding low-density parity-check codes, see \cite{BGT93,910577} for instance, and solving certain type of satisfiability problems, see \cite{MPZ02,BMZ05}, for instance.

Despite its successes, the convergence analysis of the Min-Sum algorithm remains limited.
For quadratic optimization problems and, more generally, convex optimization problems, the Min-Sum algorithm has been shown to converge to the problem solution for \emph{any} graph topology under the assumption that the objective function is walk-summable \citep{MJW06}, or, equivalently, scaled diagonal dominant \citep{MVR10}.
These conditions allow the use of contraction arguments to establish exponential convergence rates. These conditions do not apply to quadratic objective functions in Laplacian matrices, hence to the voltage problem,
and they do not apply to constrained optimization problems, hence to the flow problem.
At the same time, on the one hand quadratic approximations of the Min-Sum algorithm leading to the Approximate Message Passing (AMP) algorithm are becoming increasingly popular in high-dimensional statistics and machine learning \citep{Donoho18914,montanari_2012, OptAMP18}. The AMP algorithm is typically analysed from an asymptotical point of view and this analysis is \emph{not} based on contraction arguments. The recent success of the AMP algorithm prompts for improving the understanding of the convergence and correctness of the Min-Sum algorithm directly applied to quadratic models, where previously-considered contraction-based assumptions do not hold. On the other hand, while some work has been done to study the behavior of Min-Sum in constrained settings, this work seems to have focused on linear objective functions and a general theory is missing. See \cite{GSW12} and references therein. 

\underline{Results.} This paper investigates the convergence behavior of the Min-Sum algorithm applied to the voltage and flow problems and establishes a general framework for the analysis of message-passing algorithms that can accommodate for non contraction-based assumptions and for constrained-based optimization problems.
The algorithms that we analyze---Algorithm \ref{alg:Min-Sum voltages, quadratic messages} for the voltage problem and Algorithm \ref{alg:Min-Sum, quadratic messages, no leaves} for the flow problem---are easy to implement, with a pseudo-code that fits two lines.
We investigate the behavior of these algorithms on general weighted graphs, and we show that the error they incur can be characterized in terms of hitting times of ordinary diffusion random walks defined on the computation trees that are obtained by unraveling the operations of the algorithms with time \citep{WF01}. We specialize the analysis to the  case of $d$-regular graphs. When $d=2$, we show that the flow algorithm converges to the true solution in the $\ell_\infty$ norm with rate $O(1/t)$, where $t$ is the iteration time, independently of the graph dimensions $n$ and $m$, yielding a $\varepsilon$-approximate solution with a running time $O(m/\varepsilon)$, linear in $m$ (Corollary \ref{cor:cycle conv}). On the other hand, when $d=2$, we show that the voltage algorithm does \emph{not} converge, as it keeps oscillating (Corollary \ref{cor:not convergence}).
For graphs with equal weights, when $d\ge 3$, we show that the error both algorithms commit is characterized by quantities involving the difference of non-backtracking random walks on the original graph that originate from neighbor nodes (Lemma \ref{lem:regulargraph}).
We show that the convergence rate of the algorithms (with respect to various norms) can be controlled by uniform bounds on the total variation distance between the distributions of these walks. For graphs where these bounds decay exponentially $O(e^{-t})$ (such as complete graphs, as we show numerically) or polynomially $O(1/t^\beta)$ with $\beta>0$ (such as cycles and tori, where $\beta=1/2$ as we show numerically), possibly under additional assumptions on the problem instance (depending on the norm chosen for the analysis), the Min-Sum algorithm yield $\varepsilon$-approximate solutions for both voltage and flow problem with time $O(m \log 1/\varepsilon)$ or $O(m /\varepsilon^{1/\beta})$, respectively, linear in $m$ (Theorem \ref{thm:runningtime} for voltages, and Theorem \ref{thm:runningtime flow} for flows).

\underline{Novelty in the analysis of message passing.} Our analysis of the Min-Sum algorithm presents two key contributions that go beyond the specifics of the problems here addressed.
\begin{enumerate}
\item \underline{Beyond contraction arguments.} Typically, the convergence analysis of the Min-Sum scheme exploits a decay of correlation property in the computation tree established upon imposing \emph{local} assumptions on the original problem, such as the assumption of scaled diagonal dominance. 
These conditions essentially yield a bound on the local interactions in the computation tree by a quantity strictly less than one. In this case, a contraction argument can be performed at each level of the tree to establish an \emph{exponential} decay of correlation from root to leaves, which translates into an exponential convergence for the algorithm.
On the other hand, the analysis of message passing that we present in this paper is based on a \emph{global} analysis of the problem supported on the computation tree, via the analogy with electric circuits and the characterization with hitting times of random walks, 
which is not captured by contraction arguments based on worst-case bounds for local interactions. In fact, as mentioned above, for some graphs this analysis yields a convergence \emph{polynomial} in time, not exponential.
\vspace{-.1cm}
\item \underline{Beyond unconstrained optimization.} Typically, the convergence analysis of the Min-Sum algorithm relies on finding the fix point(s) of the message updates. For unconstrained optimization problems, the key insight to find fix point(s) is that the first order optimality conditions for the original problem are respected by the problem supported on the computation tree, for every node that is not a leaf \citep{WF01,MVR10}. The convergence analysis that we present for the flow problem---a constrained problem---is based on the observation that the Karush-Kuhn-Tucker (KKT) optimality conditions for constrained problems are respected on the computation tree, for every non-leaf node. This insight allows to extend the analysis of the Min-Sum algorithm to more general constrained optimization problems.
\end{enumerate}

\underline{Structure of the paper.} The structure of the paper is as follows. In Section \ref{sec:Voltage and flow problem} we define the voltage and flow problems as optimization routines. In Section \ref{sec:Min-Sum algorithm} we introduce the general-purpose Min-Sum algorithm, and we specialize it to the voltage and flow problem, respectively. In Section \ref{sec:results} we present the error characterization for the Min-Sum algorithms for $d$-regular graphs, and we present the main results on their convergence. In Section \ref{sec:proofs} we develop the general framework to investigate the Min-Sum algorithm, yielding the characterization of the error in terms of hitting times of random walks on the computation trees, and we present the proofs of the results given in the previous section. In Appendix \ref{sec:Laplacians and random walks} we establish a general connection between the inverse of restricted Laplacian matrices and hitting times of ordinary diffusion random walks on graphs---which is the main computational tool used in our analysis. Technical results are in Appendix \ref{app:technical}.
\begin{remark}[Notation]
\label{rem:notation}
Given a matrix $M$, we denote by $M^T$ its transpose, by $M^{-1}$ its inverse, and by $M^{+}$ its Moore-Penrose pseudoinverse. 
Given a set $\mathcal{I}$ and $i\in \mathcal{I}$, we denote by $i$ both the element $i\in \mathcal{I}$ and the subset $\{i\}\subseteq \mathcal{I}$ (e.g., we write $\mathcal{I}\setminus i$ to mean $\mathcal{I}\setminus \{i\}$). Given a function $f:\R^{\mathcal{I}}\rightarrow \R$, $i\in\mathcal{I}$, we write $f(x_{\mathcal{I}\setminus i}\,\cdot)$ to mean the function $z\in\R \rightarrow f(x_{\mathcal{I}\setminus i}z)$. We adopt the following taxonomy of notation:
\vspace{-.1cm}
\begin{itemize}
\setlength\itemsep{-.17em}
\item the arrow notation $\vec {\color{white}\cdot}$ is used to either refer to quantities related to directed graphs or to refer to probability distributions related to non-backtracking random walks;
\item the hat notation $\hat{\color{white}\cdot}$ is used to refer to the output of algorithms;
\item the bar notation $\bar{\color{white}\cdot}$ is for quantities related to graphs where a node has been removed;
\item the double-struck notation is for quantities related to computation trees, cf.\ Section \ref{sec:proofs}.
\end{itemize}

\end{remark}

\section{Voltage and Flow Problems}\label{sec:Voltage and flow problem}

Throughout, let us consider a simple (i.e., no self-loops, and no multiple edges), undirected, weighted graph $G$ given by the triple $(V,E,W)$, where $V$ is a set of $n$ vertices, $E$ is a set of $m$ edges, and $W\in\R^{V\times V}$ is a symmetric matrix that assigns a positive weight to every edge, namely, $W_{vw}>0$ if $\{v,w\}\in E$, and $W_{vw}=0$ if $\{v,w\}\not\in E$. The matrix $W$ is called the weighted \emph{adjacency matrix} of the weighed graph $G$. Define the weighted degree of a vertex $v$ by $d_v:=\sum_{w\in V} W_{vw}$, and let $D\in\R^{V\times V}$ be the diagonal matrix defined by $D_{vv}:=d_v$. The Laplacian $L$ of the graph $G$ is the matrix defined as $L:=D-W$. Without loss of generality, in what follows we assume that the graph $G$ is connected, otherwise we can treat each connected component on its own. It can be verified that in this case the null space of the Laplacian matrix $L$ is spanned by the all-ones vector $1\in\R^V$, so that the range of $L$ is spanned by the vectors orthogonal to $1$.

\underline{Voltage problem.} We are interested in solving linear equations in the Laplacian matrix $L$. That is, given $b$ in the range of $L$, i.e., $1^Tb=0$, our goal is to find $\nu$ that satisfies
\begin{align}
	L \nu = b.
	\label{system}
\end{align}
As it can immediately be checked from first order optimality conditions, the set of solutions to \eqref{system} coincides with the set of solutions to the ``voltage problem," namely:
\begin{align}
\begin{aligned}
	\text{maximize }\quad   & - \frac{1}{2} \nu^T L \nu + b^T \nu.
\end{aligned}
\label{dual}
\end{align}
We are interested in the unique solution that is orthogonal to the all one vector, namely, $\nu^\star = L^+ b$, where $L^{+}$ is the Moore-Penrose pseudoinverse of $L$. 
Given an accuracy parameter $\varepsilon>0$, the fast solvers mentioned in the introduction achieve a quasi-linear running time $\tilde O(m\log^c n \log 1/\varepsilon)$ to return an estimate $\hat\nu$ that satisfies
$
	\| \nu^\star - \hat\nu \|_L \le \varepsilon \| \nu^\star \|_L,
$
where $\|\nu\|_L := \sqrt{\nu^T L \nu}$ is the Laplacian-modified $\ell_2$ norm, also known as $L$-norm. It can be seen that $\|\,\cdot\,\|_L$ is a pseudo-norm, as $\|\nu\|_L=0$ for any vector $\nu$ orthogonal to the all one vector. In many applications where Laplacian solvers are needed, the $L$-norm is the natural norm to run the analysis. See \cite{V13}, for instance. The $L$-norm is related to the Euclidean norm as follow: $\| \nu \|_L \le \kappa \| \nu \|_2$ and $\| \nu \|_2 \le \kappa \| \nu \|_L$, where $\kappa$ is the ratio between the the maximum and minimum non-zero eigenvalues of $L$. 

\underline{Flow problem.} 
To introduce the flow problem, let us fix an arbitrary orientation of the edges in $E$. Denote this set of directed edges by $\vec E$, and let $\vec G= (V, \vec E)$ be the corresponding directed graph. Let $A\in\R^{V\times \vec E}$ be the \emph{signed vertex-edge incidence} matrix of the graph $\vec G$, defined as $A_{ve}:=1$ if edge $e$ leaves node $v$, $A_{ve}:=-1$ if edge $e$ enters node $v$, $A_{ve}:=0$ otherwise.
To each directed edge $e=(v,w)$ let $R\in\R^{\vec E\times \vec E}$ be the diagonal matrix defined by $R_{ee}:=1/W_{vw}$. It is easy to verify that the Laplacian $L$ of the undirected graph $G$ can be expressed as $L=AR^{-1} A^T$. Note that while $A$ depends on the choice of the direction of the edges in $\vec E$, $L$ does not. The ``flow problem" is defined as follows:
\begin{align}
\begin{aligned}
	\text{minimize }\quad   & \frac{1}{2} x^T R x\\
	\text{subject to }\quad & Ax = b.
\end{aligned}
\label{primal}
\end{align}
This problem can be interpreted as a minimal-energy electrical network problem. To each directed edge $e\in \vec E$ is associated a resistor of value $R_{ee}$ (analogously, a conductor of value $1/R_{ee}$) through which a current $x_e$ flows, with the convention that $x_e>0$ if the current is in the direction of the edge, $x_e<0$ if the current is in the direction opposite the edge. The energy of having current $x_e$ on edge $e$ is given by the $\frac{1}{2}R_{ee} x_e^2$. For each node $v\in V$, $b_v$ represents a given external current: $b_v>0$ represents a source where the current enters node $v$, whereas $b_v<0$ represents a sink where the current exits node $v$. The constraint equations $Ax=b$ embody Kirchhoff's conservation law, by which at each vertex the net sum of the incoming and outgoing internal flows equals the external flow.

\underline{Connection between voltage and flow problems.} From the KKT optimality conditions it can be shown (see Proposition \ref{prop:quadratic} in Appendix \ref{app:technical}) that the unique solution of problem \eqref{primal} reads
$
	x^\star = R^{-1}A^T L^+ b = R^{-1}A^T \nu^\star.
$
That is, if $e=(v,w)\in\vec E$, we have $x^\star_e = W_{vw} (\nu^\star_v - \nu^\star_w)$ which represents Ohm's law.
Finally, notice that problem \eqref{dual} is the dual of \eqref{primal}. Recall that the Lagrangian of \eqref{primal} is the function $\mathcal{L}$ from $\mathbb{R}^{\vec E}\times\mathbb{R}^V$ to $\mathbb{R}$ defined as
\begin{align}
	\mathcal{L}(x,\nu) := \frac{1}{2} x^T R x + \nu^T (b - A x).
	\label{lagrangian-primal}
\end{align}
The dual function is
$
	q(\nu) 
	= \min_{x\in\mathbb{R}^{\vec E}} \mathcal{L}(x,\nu)
	= 
	- \frac{1}{2} \nu^T L \nu + b^T \nu,
$
and the dual problem reads $\max_{\nu\in\R^V} q(\nu)$, which is problem \eqref{dual}.

\section{Min-Sum Algorithm}\label{sec:Min-Sum algorithm}

The Min-Sum algorithm is a distributed routine to optimize a cost function that has a graph structure. 
Let $\V$ and $\F$ be two finite sets, whose elements are respectively referred to as \emph{variables} and \emph{factors}, to be interpreted as two groups of vertices in a certain undirected bipartite graph with edge set $\E$, where each edge in $\E$ connects an element of $\V$ with an element of $\F$. For any variable node $i\in\V$, we use the notation $\partial i$ to denote the set of neighbors of vertex $i$ in the factor graph. Analogously, for any factor node $a\in\F$, we use the notation $\partial a$ to denote the set of neighbors of vertex $a$ in the factor graph. Consider the following optimization problem over $x\in\R^\V$:
\begin{align}
	\text{minimize }\quad   & g(x) := 
	\sum_{i\in \V} g_{i}(x_{i}) +
	\sum_{a\in \F} g_{a}(x_{\partial a}),
	\label{cost function Min-Sum}
\end{align}
for a given set of functions $g_{i}: \R \rightarrow \R \cup \{\infty\}$, for each $i\in\V$, and $g_{a}: \R^{\partial a} \rightarrow \R \cup \{\infty\}$, for each $a\in\F$.
The Min-Sum algorithm is an iterative algorithm that proceeds by updating a set of \emph{messages} associated to each edge in the underlying factor graph. Messages are functions of the optimization variables $x_i$, for $i\in\V$, and each edge in the factor graph is associated with two messages at any given iteration of the algorithm. For any time $s\ge 0$, and $i\in\V,a\in\F$ so that $\{i,a\}\in\E$, $\mu^{s}_{i\rightarrow a}:\R\rightarrow\R \cup \{\infty\}$ represents a message from variable to factor, and $\mu^{s}_{a\rightarrow i}:\R\rightarrow\R \cup \{\infty\}$ represents a message from factor to variable.

In this work we consider the \emph{synchronous} implementation of the Min-Sum algorithm, where at each iteration all messages are updated.
For a given iteration step $t\ge 1$, the algorithm reads as in Algorithm \ref{alg:Min-Sum}.
\begin{algorithm}
	\DontPrintSemicolon
	\KwIn{Initial messages $\{\mu^{0}_{a\rightarrow i}\}$, for each $\{i,a\}\in\E$;}
	\For{$s \in \{1,\ldots,t\}$}{
		Compute, for each $\{i,a\}\in\E$,\\
		$\mu^{s}_{i\rightarrow a} = 
		g_i + \sum_{b\in\partial i \setminus a} \mu^{s-1}_{b\rightarrow i},
		\qquad
		\mu^{s}_{a\rightarrow i} =
		\min_{x\in\R^{\V}}
		\{
		g_a(x_{\partial a\setminus i}\, \cdot) 
		+ \sum_{j\in\partial a \setminus i} \mu^{s}_{j\rightarrow a}(x_j)
		\}$;
	}
	Compute $\mu^{t}_{i} = g_i + \sum_{a\in\partial i} \mu^{t}_{a\rightarrow i}$, for each $i\in\V$;\\
	\KwOut{$\hat x_i^t := \arg\min_{z\in\R} \mu^{t}_{i}(z)$, for each $i\in\V$.}
	\caption{Min-Sum}
	\label{alg:Min-Sum}
\end{algorithm}

The iterations of the Min-Sum algorithm correspond to the dynamic programming iterations to solve \eqref{cost function Min-Sum} when the underlying graph is a tree. In general, the Min-Sum algorithm is not guaranteed to converge, and even when it converges it is not guaranteed that it converges to an optimal point of problem \eqref{cost function Min-Sum}.
The Min-Sum algorithm is suited for cooperative distributed multi-agents optimization. In this framework, to each variable $i\in \V$ is associated an agent who has access only to the local objective functions $g_i$ and $g_a$, for each $a\in\partial i$. At each time step, each agent can only exchange information with his near-neighbors in the factor graph, i.e., the agents who share at least one factor with him. Agents' goal is to cooperate over time to solve problem \eqref{cost function Min-Sum}. The Min-Sum algorithm is a mean to achieve this goal: iteratively, each agent $i$ updates a set of messages $\{\mu^{s}_{i\rightarrow a}\}, a\in\partial i$, using the messages that he has received from his neighbors at the previous time step, i.e., $\{\mu^{s-1}_{j\rightarrow a}\}, a\in\partial j$, for each neighbor $j$ of $i$, and using the locally known functions $g_i$ and $g_a$, for each $a\in\partial i$.

\begin{remark}[Belief Propagation]
The Min-Sum algorithm alternates minimizations and summations to compute the minimum of a sum of functions of the form in \eqref{cost function Min-Sum}. The main property that is used to derive the local updates of this algorithm (on a tree, where the algorithm computes the correct solution) is the fact that the minimum distributes over the summation. Closely related to the Min-Sum algorithm is the Sum-Product algorithm, where the goal is to compute the sum (or integral) of a product of functions of the following form:
\begin{align}
	\prod_{i\in \V} g_{i}(x_{i})
	\prod_{a\in \F} g_{a}(x_{\partial a}).
	\label{BP factorization}
\end{align}
As the sum (or integral) distributes over the product, the Sum-Product algorithm takes the exact same form as Algorithm \ref{alg:Min-Sum} with minima replaced by sums and sums replaced by products. In the machine learning community, the Sum-Product algorithm is usually referred to as Belief Propagation, and a classical application involves the computation of the normalization function of graphical models, where random variables $\{X_v\}_{v\in V}$ have a joint density that factorizes as in \eqref{BP factorization}. While Min-Sum and Sum-Product are formally related, they are known to produce results that are qualitatively different. In particular, while there are sufficient conditions that guarantee the convergence and correctness of Min-Sum in graphs of arbitrary topology, the Sum-Product algorithm is known to yield correct results only on locally tree-like graphs \citep{WF01}.
\end{remark}

We now specialize the Min-Sum algorithm to the voltage and flow problem, respectively.

\subsection{Min-Sum for the Voltage Problem}
\label{sec:Min-Sum for the voltage problem}
In order to apply the Min-Sum algorithm to solve the voltage problem \eqref{dual}, throughout the rest of this paper we consider the equivalent minimization form:
$
	\text{minimize }\  \frac{1}{2} \nu^T L \nu - b^T \nu.
$
We rewrite this problem in the form \eqref{cost function Min-Sum}.\footnote{It is well known that different ways of mapping the original optimization problem to the form \eqref{cost function Min-Sum} yield different convergence results. See \cite{MVR09}, for instance.} To this end, let us identify $\V \leftrightarrow V$ and $\F \leftrightarrow E$. In this context, the notation $\partial v$ indicates the set of edges in $E$ are connected to $v\in V$; analogously, the notation $\partial e$ indicates the pair of vertices that are connected by edge $e$. For each $v\in V$, define the function $g_v(\nu_v) = -b_v \nu_v$, and for each
$e\in E$ define the function
$
	g_{e}(\nu_{\partial e}) :=
	\frac{1}{2} W_{vw} (\nu_v-\nu_w)^2,
$
where $\partial e = \{v,w\}$.
Recalling that $\nu^T L \nu = \sum_{\{v,w\}\in E} W_{vw} (\nu_v-\nu_w)^2$, the problem above is equivalent to the problem of minimizing the function
$
	\sum_{v\in V} g_{v}(\nu_v)
	+
	\sum_{e\in E} g_{e}(\nu_{\partial e}),
$
which is of the form \eqref{cost function Min-Sum}. As in the factor graph interpretation each factor node (i.e., edge in $E$) is adjacent to exactly two variable nodes (i.e., vertices in $V$), it is convenient to rewrite the Min-Sum algorithm in terms of updates of messages from factors to variables only. At each time step $t\ge 1$, to each edge $e=\{v,w\}\in E$ are associated two messages: $\mu^t_{e\rightarrow v}$ and $\mu^t_{e\rightarrow w}$.
It is easy to verify that in the present setting Algorithm \ref{alg:Min-Sum} becomes Algorithm \ref{alg:Min-Sum voltages}.

\begin{algorithm}
	\DontPrintSemicolon
	\KwIn{Initial messages $\{\mu^{0}_{e\rightarrow v}\}$, for $e\in E, v\in\partial e$;}
	\For{$s \in \{1,\ldots,t\}$}{
		Compute, for each $e\in E$, $v\in\partial e$, with $w=\partial e \setminus v$
		$\mu^{s}_{e\rightarrow v} =
		\min_{\nu_w\in\R}
		\{
		g_w(\nu_w)
		+ g_e(\nu_{w}\, \cdot) 
	 	+ \sum_{f\in\partial w \setminus e} \mu^{s-1}_{f\rightarrow w}(\nu_w)
		\}$;
	}
	Compute $\mu^{t}_{v} = g_v + \sum_{e\in\partial v} \mu^{t}_{e\rightarrow v}$, for each $v\in V$;\\
	\KwOut{$\hat \nu_v^t := \arg\min_{z\in\R} \mu^{t}_{v}(z)$, for each $v\in V$.}
	\caption{Min-Sum, voltage problem}
	\label{alg:Min-Sum voltages}
\end{algorithm}

Algorithm \ref{alg:Min-Sum voltages} applies for any choice of the initial messages $\{\mu^{0}_{e\rightarrow v}\}$, for $e\in E, v\in\partial e$. If we choose initial messages that are quadratic functions, then the messages computed by the algorithm remain quadratic. The following Proposition presents the quadratic updates, without stating formulas for the constant terms as these terms do not influence the output of the Min-Sum algorithm, as we will show next.

\begin{proposition}\label{prop:Min-Sum updates voltages}
For a given $s\ge 1$, for each $e\in E$, $v\in \partial e$ let
$
	\mu^{s-1}_{e\rightarrow v}(z) = 
	\frac{1}{2} 
	W^{s-1}_{e\rightarrow v}z^2 + w^{s-1}_{e\rightarrow v}z
	+ c^{s-1}_{e\rightarrow v}.
$
Then, for each $e\in E$, $v\in\partial e$, we have
$
	\mu^{s}_{e\rightarrow v}(z) =
	\frac{1}{2} 
	W^{s}_{e\rightarrow v}z^2 + w^{s}_{e\rightarrow v}z
	+ c^{s}_{e\rightarrow v},
$
where $\{c^{s}_{e\rightarrow v}\}$ is a certain set of constants, and (here $w=\partial e \setminus v$)
\begin{align*}
		W^{s}_{e\rightarrow v}
		=
		W_{vw}
		\frac{
		\sum_{f\in\partial w \setminus e} 
		W^{s-1}_{f\rightarrow w}}
		{W_{vw} +
		\sum_{f\in\partial w \setminus e} 
		W^{s-1}_{f\rightarrow w}},
		\qquad
		w^{s}_{e\rightarrow v}
		=
		W_{vw}
		\frac{\sum_{f\in\partial w \setminus e} 
		w^{s-1}_{f\rightarrow w}
		- b_w}{W_{vw} +
		\sum_{f\in\partial w \setminus e} 
		W^{s-1}_{f\rightarrow w}}.
\end{align*}
\end{proposition}

\begin{proof}
Fix $s\ge 1$, $e\in E$, $v\in\partial e$, and let $w=\partial e \setminus v$. The Min-Sum update rule reads
$
	\mu^{s}_{e\rightarrow v}(\nu_v) =
	\min_{z\in\R}
	\{
	- b_wz
	+ \frac{1}{2} W_{vw} (\nu_v-z)^2
	+ \frac{1}{2} z^2 \sum_{f\in\partial w \setminus e} 
	W^{s-1}_{f\rightarrow w} + 
	z\sum_{f\in\partial w \setminus e} w^{s-1}_{f\rightarrow w}
	+ \sum_{f\in\partial w \setminus e} c^{s-1}_{f\rightarrow w}
	\}.
$
The solution is
$
	z^\star = 
	(W_{vw}\nu_v + b_w - \sum_{f\in\partial w \setminus e}\! w^{s-1}_{f\rightarrow w})/
	(W_{vw} +
		\sum_{f\in\partial w \setminus e}\!
		W^{s-1}_{f\rightarrow w}),
$
and the proof is immediately concluded by substitution.
\end{proof}

From Proposition \ref{prop:Min-Sum updates voltages} it follows that if we initialize Algorithm \ref{alg:Min-Sum voltages} with quadratic messages of the form
$
	\mu^{0}_{e\rightarrow v}(z) = \frac{1}{2} 
	W^{0}_{e\rightarrow v}z^2 + w^{0}_{e\rightarrow v}z,
$
then, modulo the constant terms, we can track the evolution of the quadratic messages computed by the algorithm by evaluating recursively the set of parameters $\{W^s_{e\rightarrow v}\},\{w^s_{e\rightarrow v}\}$ as prescribed by Proposition \ref{prop:Min-Sum updates voltages}. The belief function associated to vertex $v$ at time $t$ then reads
$
	\mu^{t}_{v}(z) = -b_v z + \sum_{e\in\partial v} 
	(\frac{1}{2} 
	W^{t}_{e\rightarrow v}z^2 + w^{t}_{e\rightarrow v}z
	+ c^{t}_{e\rightarrow v} ),
$
so that
$
	\hat \nu_v^t = \arg\min_{z\in\R} \mu^{t}_{v}(z)
	= (b_v - \sum_{e\in\partial v} 
		w^{t}_{e\rightarrow v})
		/(\sum_{e\in\partial v} 
		W^{t}_{e\rightarrow v}),
$
independent of $\{c^t_{e\rightarrow v}\}$. The final procedure, in the case of quadratic initial messages, yields Algorithm \ref{alg:Min-Sum voltages, quadratic messages}.

\begin{algorithm}
	\DontPrintSemicolon
	\KwIn{Initial messages $\{W^{0}_{e\rightarrow v}\}, \{w^{0}_{e\rightarrow v}\}$, $e\in E,\!v\in\partial e$;}
	\For{$s \in \{1,\ldots,t\}$}{
		Compute, for each $e\in E$, $v\in\partial e$, with $w=\partial e \setminus v$
		$
		W^{s}_{e\rightarrow v}
		=
		W_{vw}
		\frac{
		\sum_{f\in\partial w \setminus e} 
		W^{s-1}_{f\rightarrow w}}
		{W_{vw} +
		\sum_{f\in\partial w \setminus e} 
		W^{s-1}_{f\rightarrow w}},
		\qquad
		w^{s}_{e\rightarrow v}
		=
		-W_{vw}
		\frac{b_w-\sum_{f\in\partial w \setminus e} 
		w^{s-1}_{f\rightarrow w}}{W_{vw} +
		\sum_{f\in\partial w \setminus e} 
		W^{s-1}_{f\rightarrow w}};
		$
		\vskip-0.13cm
		}
	\KwOut{
	$
	\hat \nu_v^t = \frac{b_v - \sum_{e\in\partial v} 
		w^{t}_{e\rightarrow v}}
		{\sum_{e\in\partial v} 
		W^{t}_{e\rightarrow v}}
	$, for each $v\in V$.}
\caption{Min-Sum, voltage problem, quadratic messages}
\label{alg:Min-Sum voltages, quadratic messages}
\end{algorithm}

Algorithm \ref{alg:Min-Sum voltages, quadratic messages}, with the choice of initial conditions $W^0_{e\rightarrow v} = W_{vw}$ and $w^0_{e\rightarrow v} = 0$, for each $e=\{v,w\}\in E, v\in\partial e$, will be the focus of our investigation for the voltage problem. The convergence analysis will rely on the general formulation of Algorithm \ref{alg:Min-Sum voltages}.

\begin{remark}[Gaussian Belief Propagation]\label{rem:gaussianBP}
When applied to quadratic objective functions, the Min-Sum algorithm (e.g., Algorithm \ref{alg:Min-Sum voltages, quadratic messages}) corresponds to the computation of the mean performed by the Belief Propagation algorithm applied to Gaussian graphical models \citep{WF01}. 
This connection is due to the fact that in the Gaussian case the mode of a distribution coincides with its mean. Min-Sum can be seen as computing the maximum a posteriori (MAP) estimate of the graphical model (i.e., the mode), which corresponds to the mean computed by Belief Propagation. Typically, the convergence and correctness of the mean of Gaussian Belief Propagation is investigated within the framework of walk-summability \citep{MJW06}, or, equivalently, scaled diagonal dominance \citep{MVR10}. As the Laplacian $L$ is not positive definite, the existing framework can not be applied to investigate the convergence behavior of Algorithm \ref{alg:Min-Sum voltages, quadratic messages}. In this paper, we overcome this limitation by using a different approach based on the global analysis of the computation tree that supports the operations of the algorithm with time.
\end{remark}

\subsection{Min-Sum for the Flow Problem}\label{ref:Min-Sum flows}

To apply the Min-Sum algorithm to solve the flow problem \eqref{primal}, we first rewrite this problem in the unconstrained form \eqref{cost function Min-Sum}. To this end, let us identify $\V \leftrightarrow \vec E$ and $\F \leftrightarrow V$. In this context, the notation $\partial v$ indicates the set of edges in $\vec E$ that either enter or leave node $v\in V$; analogously, the notation $\partial e$ indicates the pair of vertices that are connected by edge $e$. For each $e\in\vec E$, define the function $g_e(x_e) = \frac{1}{2}R_{ee} x_e^2$, and for each
$v\in V$ define the function $g_{v}(x_{\partial v}) := 0$ if $\sum_{e\in \vec E} A_{ve} x_e = b_v$, and $g_{v}(x_{\partial v}) := \infty$ otherwise.
Clearly, problem \eqref{primal} is equivalent to the problem of minimizing the function
$
	\sum_{e\in \vec E} g_{e}(x_e)
	+
	\sum_{v\in V} g_{v}(x_{\partial v}),
$
which is of the form \eqref{cost function Min-Sum}. As in the factor graph interpretation each variable node (i.e., edge in $\vec E$) is adjacent to exactly two factor nodes (i.e., vertices in $V$), it is convenient to rewrite the Min-Sum algorithm in terms of updates of messages from variables to factors only. At each time step $t\ge 1$, to each edge $e\in\vec E$ such that $\partial e = \{v,w\}$ (that is, either $e=(v,w)$ or $e=(w,v)$) are associated two messages: $\mu^t_{e\rightarrow v}$ and $\mu^t_{e\rightarrow w}$.
It can be verified that in the present setting Algorithm \ref{alg:Min-Sum} becomes Algorithm \ref{alg:Min-Sum for min-cost network flow}.\footnote{For convenience of notation, we present Algorithm \ref{alg:Min-Sum for min-cost network flow} with initial messages indexed by time $0$. Messages are indexed by a time parameter that is shifted by one unit compared to Algorithm \ref{alg:Min-Sum}.}

\begin{algorithm}
	\DontPrintSemicolon
	\KwIn{Initial messages $\{\mu^{0}_{e\rightarrow v}\}$, for $e\in \vec E, v\in\partial e$;}
	\For{$s \in \{1,\ldots,t\}$}{
		Compute, for each $e\in \vec E$, $v\in\partial e$, with $w=\partial e \setminus v$
		$
		\mu^{s}_{e\rightarrow v} =
		g_e + 
		\min_{x\in\R^{\vec E}}
		\bigg\{
		g_w(x_{\partial w\setminus e}\,\cdot) + \sum_{f\in\partial w \setminus e} \mu^{s-1}_{f\rightarrow w}(x_{f})
		\bigg\};
		$
		\vskip-0.13cm
		}
	Compute $\mu^{t}_{e} = \mu^{t}_{e\rightarrow v} + \mu^{t}_{e\rightarrow w} - g_e$, for $e=(v,w)\in\vec E$;\\
	\KwOut{$\hat x_e^t := \arg\min_{z\in\R} \mu^{t}_{e}(z)$, for each $e\in\vec E$.}
	\caption{Min-Sum, flow problem}
	\label{alg:Min-Sum for min-cost network flow}
\end{algorithm}

Note that if $\partial w \setminus e = \varnothing$, then the message $\mu^{s}_{e\rightarrow v}$ in Algorithm \ref{alg:Min-Sum for min-cost network flow} reads $\mu^{s}_{e\rightarrow v}(z)=\frac{1}{2} R_{ee}b_w^2$ if $z = A_{we}b_w$, and $\mu^{s}_{e\rightarrow v}(z)=\infty$ otherwise,
for any $s\ge 1$, irrespective of the choice of the initial messages. This is a consequence of the fact that if node $w\in V$ is a \emph{leaf} of $G$ (i.e., $w$ has degree $1$), then by Kirchhoff's law the value $x_e$ of the flow on the unique edge $e$ that is associated to $w$ is $x_e=A_{we} b_w$. Without loss of generality, in what follows we assume that the graph $G$ has no leaves. Otherwise, by removing one leaf at a time as described above, we can reduce the original problem to a new problem supported on a graph with no leaves (if $G$ is a tree we can exactly compute all the components of the solution $x^\star$ in this way).

Algorithm \ref{alg:Min-Sum for min-cost network flow} applies for any choice of the initial messages $\{\mu^{0}_{e\rightarrow v}\}$, for $e\in \vec E, v\in\partial e$. If we choose initial messages that are quadratic functions, then also the following messages computed by the algorithm remain quadratic. The next Lemma presents the quadratic updates, without stating formulas for the constant terms as these terms do not influence the output of the Min-Sum algorithm, as we will show next.

\begin{proposition}\label{prop:Min-Sum updates}
For a given $s\ge 1$, for each $e\in\vec E$, $v\in \partial e$ let
$
	\mu^{s-1}_{e\rightarrow v}(z) = \frac{1}{2} 
	R^{s-1}_{e\rightarrow v}z^2 + r^{s-1}_{e\rightarrow v}z
	+ c^{s-1}_{e\rightarrow v}.
$
Then, if the graph $G$ has no leaves, for each $e\in \vec E$, $v\in\partial e$, we have
$
	\mu^{s}_{e\rightarrow v}(z) =
	\frac{1}{2} 
	R^{s}_{e\rightarrow v}z^2 + r^{s}_{e\rightarrow v}z
	+ c^{s}_{e\rightarrow v},
$
where $\{c^{s}_{e\rightarrow v}\}$ is a set of constants, and (here $w=\partial e \setminus v$)
$$
		R^{s}_{e\rightarrow v}
		= R_{ee} + 
		\frac{1}
		{\sum_{f\in\partial w\setminus e} 1/R^{s-1}_{f\rightarrow w}},
		\qquad
		r^{s}_{e\rightarrow v}
		=
		- A_{we} 
		\frac{
		\sum_{f\in\partial w\setminus e} 
		A_{wf} r^{s-1}_{f\rightarrow w}/R^{s-1}_{f\rightarrow w}
		+ b_w
		}
		{\sum_{f\in\partial w\setminus e} 1/R^{s-1}_{f\rightarrow w}}.
$$
\end{proposition}

\begin{proof}
Fix $s\ge 1$, $e\in\vec E$, $v\in\partial e$, and let $w\in\partial e \setminus v$. As we assume that the graph $G$ has no leaves, then $\partial w \setminus e \neq \varnothing$ and the Min-Sum update rule reads
$
	\mu^{s}_{e\rightarrow v}(z)
	= g_e(z) + 
	\min_{x\in\R^{\partial w\setminus e} : \tilde A x=b_w-A_{we}z}
	\sum_{f\in\partial w \setminus e} \mu^{s-1}_{f\rightarrow w}(x_{f}),
$
where $\tilde A := A_{w,\partial w\setminus e}$ is the submatrix of $A$ given by the $w$-th row and the columns indexed by the elements in $\partial w\setminus e$.
The minimization in the definition of $\mu^{s}_{e\rightarrow v}$ can be written as
\begin{align}
\begin{aligned}
	\text{minimize }\quad   & \frac{1}{2} x^T \tilde R x + \tilde r^T x + 1^T\tilde c\\
	\text{subject to }\quad & \tilde A x = b_w - A_{we}z
\end{aligned}
\label{intermediate opt}
\end{align}
over $x\in \R^{\partial w\setminus e}$, with
	$\tilde R \in \R^{(\partial w\setminus e)\times (\partial w\setminus e)}$ diagonal with $\tilde R_{ff} := R^{s-1}_{f\rightarrow w}$,
	$\tilde r \in \R^{\partial w\setminus e}$ with $\tilde r_f := r^{s-1}_{f\rightarrow w}$, and
	$\tilde c \in \R^{\partial w\setminus e}$ with $\tilde c_f := c^{s-1}_{f\rightarrow w}$.
By Proposition \ref{prop:quadratic} in Appendix \ref{app:technical} the solution of \eqref{intermediate opt} reads
$
	x^\star = h - A_{we}z y,
$
with $h:= b_w y - (I-y \tilde A) \tilde R^{-1} \tilde r$, $y := \tilde R^{-1} \tilde A^T \tilde L^{-1}$, and $\tilde L:= \tilde A \tilde R^{-1} \tilde A^T\in\R$. Substituting $x^\star$ in \eqref{intermediate opt} we get
$
	\mu^{s}_{e\rightarrow v}(z) = \frac{1}{2} 
	R^{s}_{e\rightarrow v}z^2 + r^{s}_{e\rightarrow v}z
	+ c^{s}_{e\rightarrow v},
$
with
$
	R^{s}_{e\rightarrow v} := R_{ee} + y^T\tilde R y
$,
and
$
	r^{s}_{e\rightarrow v} := -A_{we} (y^T \tilde R h +\tilde r^T y).
$
The proof follows as
$
	\tilde L
	= \sum_{f\in \partial w\setminus e} 1/R^{s-1}_{f\rightarrow w},
$
$
	y^T\tilde R y = \tilde L^{-1},
$
$
	y^T \tilde R h
	= \tilde L^{-1} b_w,
$
$
	\tilde r^T y
	= \tilde L^{-1} 
	\sum_{f\in \partial w\setminus e} 
	A_{wf} r^{s-1}_{f\rightarrow w}/R^{s-1}_{f\rightarrow w}.
$
\end{proof}

From Proposition \ref{prop:Min-Sum updates} it follows that if we initialize Algorithm \ref{alg:Min-Sum for min-cost network flow} with quadratic messages
$
	\mu^{0}_{e\rightarrow v}(z) = \frac{1}{2} 
	R^{0}_{e\rightarrow v}z^2 + r^{0}_{e\rightarrow v}z,
$
then, modulo the constant terms, we can track the evolution of the quadratic messages computed by the algorithm by evaluating recursively the set of parameters $\{R^s_{e\rightarrow v}\},\{r^s_{e\rightarrow v}\}$ as prescribed by Proposition \ref{prop:Min-Sum updates}. The belief function associated to edge $e$ at time $t$ then reads
$
	\mu^{t}_{e}(z) = \frac{1}{2} 
	(R^{t}_{e\rightarrow v} + R^{t}_{e\rightarrow w} - R_{ee})z^2
	+ (r^{t}_{e\rightarrow v} + r^{t}_{e\rightarrow w}) z + c^t_e,
$
for a certain constant $c^t_e$, so that
$
	\hat x_e^t = \arg\min_{z\in\R} \mu^{t}_{e}(z)
	= - (r^{t}_{e\rightarrow v} + r^{t}_{e\rightarrow w}) /
	(R^{t}_{e\rightarrow v} + R^{t}_{e\rightarrow w} - R_{ee}),
$
independent of $c^t_e$. The final procedure so obtained, in the case of quadratic initial messages, for a graph $G$ with no leaves, is given in Algorithm \ref{alg:Min-Sum, quadratic messages, no leaves}.

\begin{algorithm}[h!]
	\DontPrintSemicolon
	\KwIn{Initial messages $\{R^{0}_{e\rightarrow v}\}, \{r^{0}_{e\rightarrow v}\}$, $e\in \vec E,\!v\in\partial e$;}
	\For{$s \in \{1,\ldots,t\}$}{
		Compute, for each $e\in \vec E$, $v\in\partial e$, with $w=\partial e \setminus v$
		$
		R^{s}_{e\rightarrow v}
		= R_{ee} + 
		\frac{1}
		{\sum_{f\in\partial w\setminus e} 1/R^{s-1}_{f\rightarrow w}},
		\qquad
		r^{s}_{e\rightarrow v}
		=
		- A_{we} 
		\frac{
		\sum_{f\in\partial w\setminus e} 
		A_{wf} r^{s-1}_{f\rightarrow w}/R^{s-1}_{f\rightarrow w}
		+ b_w
		}
		{\sum_{f\in\partial w\setminus e} 1/R^{s-1}_{f\rightarrow w}};
		$
		\vskip-0.13cm
		}
	\KwOut{$\hat x_e^t
	= - \frac{r^{t}_{e\rightarrow v} + r^{t}_{e\rightarrow w}}
	{R^{t}_{e\rightarrow v} + R^{t}_{e\rightarrow w} - R_{ee}}$, for $e=(v,w)\in\vec E$.}
\caption{Min-Sum, flow problem, quadratic messages, no leaves}
\label{alg:Min-Sum, quadratic messages, no leaves}
\end{algorithm}

Algorithm \ref{alg:Min-Sum, quadratic messages, no leaves}, with the choice of initial conditions $R^0_{e\rightarrow v} = R_{ee}$, $r^0_{e\rightarrow v} = 0$, for each $e\in\vec E, v\in\partial e$, will be the focus of our investigation for the flow problem. The convergence analysis will rely on the general formulation of Algorithm \ref{alg:Min-Sum for min-cost network flow}.

\section{Results for $d$-Regular Graphs}\label{sec:results}

A $d$-regular graph is a graph where each node has $d$ neighbors. In this section, we first characterize the error committed by the Min-Sum algorithm to solve the voltage and flow problems on connected $d$-regular graphs in terms of the solution $\nu^\star=L^+ b$ of the voltage problem. Then, we develop convergence results with respect to various norms. 

The proofs of the main results here presented---Lemma \ref{lem:ring} and Lemma \ref{lem:regulargraph}---are given in Section \ref{sec:proofs}, where we develop the general machinery to analyze the behavior of the Min-Sum scheme to solve constrained optimization problems, and we present the characterization of the error committed by the voltage and flow algorithms on general weighted graphs in terms of hitting times of random walks on the computation tree (for the sake of simplicity, we postpone the results on general graphs to Section \ref{sec:proofs}, as these results require the notion of computation tree). The proofs of the technical results used here are given in Appendix \ref{app:technical}.

Henceforth, let $\hat\nu^t$ be the outcome of Algorithm \ref{alg:Min-Sum voltages, quadratic messages} at time $t$ with the choice of initial conditions $W^0_{e\rightarrow v} = W_{vw}$ and $w^0_{e\rightarrow v} = 0$, for each $e=\{v,w\}\in E, v\in\partial e$. Analogously, let $\hat x^t$ be the outcome of Algorithm \ref{alg:Min-Sum, quadratic messages, no leaves} at time $t$ with the choice of initial conditions $R^0_{e\rightarrow v} = R_{ee}$, $r^0_{e\rightarrow v} = 0$, for each $e\in\vec E, v\in\partial e$.
Throughout, given a weighted directed graph $\vec G=(V,\vec E, W)$ with no bi-directed edges (i.e., either $(v,w)\in \vec E$ or $(w,v)\in \vec E$), let $G=(V,E,W)$ be the weighted undirected graph naturally associated to $\vec G$, i.e., edges in $E$ are obtained by removing the orientation of the edges in $\vec E$. Let $n:=|V|$ and $m:=|E|$ be the number of nodes and edges, respectively.

\subsection{Error Characterization}
We characterize the error committed by the Min-Sum algorithm as a function of time and as a function of the voltage solution $\nu^\star=L^+ b$.

The case $d=2$, i.e., a cycle graph, allows a characterization for general weighted graphs.

\begin{lemma}[Regular graphs, $d=2$]\label{lem:ring}
Let $\vec G=(V=\{0,\ldots,n-1\},\vec E,W)$ be a weighted directed cycle, and let $G=(V,E,W)$ be the corresponding undirected graph.
Define the function $i\in \mathbb{N} \rightarrow \rho(i) := i \mod n$. For $v\in V$, $e=\{\rho(v),\rho(v+1)\}\in E$, $t\ge 2$, we have
\begin{align*}
	\nu^\star_{v} - \hat \nu^t_{v}
	&=
	\alpha_{v,t} \, \nu^\star_{\rho( v-t-1)} 
	+ (1-\alpha_{v,t}) \, \nu^\star_{\rho( v+t+1)},
	&\alpha_{v,t}:=\frac{\sum_{k= v}^{ v + t} 1/W_{\rho(k)\rho(k+1)}}{\sum_{k= v-t-1}^{ v+t} 1/W_{\rho(k)\rho(k+1)}},\\
	x^\star_{e} - \hat x^t_{e}
	&= A_{ve} \, \beta_{e,t} \, (\nu^\star_{\rho(v-t)} - \nu^\star_{\rho(v+t+1)}),
	&\beta_{e,t} := \frac{1}{\sum_{k=v-t}^{v+t} 1/W_{\rho(k)\rho(k+1)}}.
\end{align*}
If each edge has the same weight $\omega$, then $\alpha_{v,t}= 1/2$ and $\beta_{e,t}=\omega/(2t+1)$.
\end{lemma}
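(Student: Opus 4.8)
The plan is to specialize the message-passing recursions of Lemma~\ref{lem:min-sum updates voltages} and Lemma~\ref{lem:min-sum updates} to the cycle, solve them in closed form, and then translate the resulting expressions back into the exact solution $\nu^\star$ by invoking Kirchhoff's and Ohm's laws. Since every vertex of a cycle has degree $2$, the set $\partial w\setminus e$ always consists of a single edge, so both recursions collapse to scalar recursions that merely propagate along the two orientations of the cycle. Writing $a_j:=1/W_{\rho(j)\rho(j+1)}$ for the edge resistances, the quadratic-coefficient update for the voltage problem becomes the series law $1/W^{s}_{e\to v}=1/W_{vw}+1/W^{s-1}_{f\to w}$ (and identically $R^{s}_{e\to v}=R_{ee}+R^{s-1}_{f\to w}$ for the flow problem), which unrolls to say that $1/W^{t}_{e\to v}$ equals the sum of the $t+1$ resistances along the path the message has traversed. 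These sums are exactly the quantities appearing in $\alpha_{v,t}$ and $\beta_{e,t}$, and they produce the denominators in the two output formulas.

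For the linear coefficients I would introduce the minimizer variable $u^{s}_{e\to v}:=-w^{s}_{e\to v}/W^{s}_{e\to v}$, which turns the coupled update of Lemma~\ref{lem:min-sum updates voltages} into the purely additive recursion $u^{s}_{e\to v}=u^{s-1}_{f\to w}+b_{w}/W^{s-1}_{f\to w}$; unrolling it gives a double sum of external currents weighted by partial resistance sums. The flow coefficients $r^{s}_{e\to v}$ already obey an additive recursion, the only change being signs governed by the incidence entries $A_{we},A_{wf}$, which on the cycle are $\pm1$; unrolling yields the analogous double sums. Up to this point everything is an explicit, if unwieldy, function of the inputs $b$ and $W$.

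The decisive step is to re-express these double sums through the exact solution. Setting $I_k:=x^\star_{e_k}$ for the exact edge flows, Kirchhoff's law reads $I_k-I_{k-1}=b_{\rho(k)}$ and Ohm's law reads $a_k I_k=\nu^\star_{\rho(k)}-\nu^\star_{\rho(k+1)}$. Interchanging the order of summation in the double sums, the inner sums of $b$'s telescope to flow differences $I_j-I_k$, and the outer sums $\sum a_j I_j$ then telescope to voltage differences (the telescoping is unaffected by wraparound once indices are read through $\rho$). After dividing by the total conductance, each factor-to-variable message contributes a term $W^{t}_{e\to v}(\nu^\star_v-\nu^\star_{\mathrm{endpoint}})$ plus a leftover flow term $\mp I$. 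Assembling the two messages at a vertex for the voltage output, or at the two endpoints of an edge for the flow output, the external current $b_v=I_k-I_{k-1}$ cancels precisely against the two leftover flow terms, leaving a clean combination of $\nu^\star$ at the root and at the two path endpoints; reading off the coefficients gives the stated convex combinations. The equal-weight case is then immediate, since $a_j\equiv 1/\omega$ makes every resistance sum a count, so $\alpha_{v,t}=1/2$ and $\beta_{e,t}=\omega/(2t+1)$.

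The main obstacle is exactly this last translation: the recursions naturally output weighted sums of external currents, with no term-by-term decay that would bound the error. What makes the error collapse to a convex combination of only the two boundary voltages is the exact cancellation produced by the telescoping together with the identity $b_v=I_k-I_{k-1}$ --- a cancellation of \emph{signed} quantities rather than a contraction in magnitude, precisely the phenomenon emphasized in the introduction. Some care is also needed with edge orientations in the flow problem, but since reversing an edge flips $A_{ve}$, $x^\star_e$, and $\hat x^t_e$ simultaneously, it suffices to verify the identity for one consistent orientation.
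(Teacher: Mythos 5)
Your proposal is correct, and I checked the voltage half in full: on the cycle the quadratic coefficients do obey the series law, the minimizers $u^{s}_{e\to v}:=-w^{s}_{e\to v}/W^{s}_{e\to v}$ do satisfy $u^{s}_{e\to v}=u^{s-1}_{f\to w}+b_w/W^{s-1}_{f\to w}$, and after interchanging the double sum and telescoping with $b_{\rho(k)}=I_k-I_{k-1}$ and $a_kI_k=\nu^\star_{\rho(k)}-\nu^\star_{\rho(k+1)}$ one gets, e.g., $W^{t}_{e\to v}u^{t}_{e\to v}=I_{v-1}+(\nu^\star_v-\nu^\star_{\rho(v-t-1)})/S_L$ for the left message with $S_L=\sum_{k=v-t-1}^{v-1}1/W_{\rho(k)\rho(k+1)}$; the external current $b_v$ then cancels the two leftover flows in the belief, yielding exactly $\nu^\star_v-\hat\nu^t_v=(S_R\,\nu^\star_{\rho(v-t-1)}+S_L\,\nu^\star_{\rho(v+t+1)})/(S_L+S_R)$, which is the stated convex combination; the flow half unrolls the same way. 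However, your route is genuinely different from the paper's. The paper never solves the recursions: it passes to the computation tree (here a path of length $2t+1$, resp.\ $2t+2$), shows via the KKT/first-order conditions that a specific perturbation $p^\star$ built from the Lagrange multipliers $\nu^\star$ makes min-sum exact at every $t$ (Theorem~\ref{thm: opt choice p}), expresses the error by the fundamental theorem of calculus as a sensitivity formula involving $\bar\LL^{-1}$ (Theorem~\ref{thm:errorcharacterization}), and then evaluates the restricted-Laplacian inverse through hitting probabilities of a random walk --- the classical Gambler's Ruin computation (Lemma~\ref{lem:computationdevice}). What your approach buys is a fully elementary, self-contained proof for $d=2$ that needs none of that machinery and makes the signed-cancellation mechanism completely explicit; what it loses is generality: the scalar unrolling exploits that degree-$2$ messages propagate along a chain, so it does not extend to $d\ge 3$ (Lemma~\ref{lem:regulargraph}), where messages aggregate over whole subtrees and the paper's tree/hitting-time framework is precisely what still applies. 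Your telescoping identity is, in effect, the message-passing counterpart of the paper's boundary hitting-probability computation, and your closing observation about signed cancellation rather than contraction is indeed the phenomenon the paper's framework is designed to capture. One small point of care if you write this up: the voltage endpoints sit at distance $t+1$ while the flow endpoints sit at distances $t$ and $t+1$ from $v$ (hence $\beta_{e,t}$ sums $2t+1$ resistances, from $k=v-t$ to $v+t$), so track the off-by-one between the two unrollings, and verify the claimed $A_{ve}$ sign flip once for each orientation as you propose.
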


In the case of constant weights, this result shows that Min-Sum displays a non-convergent (oscillatory) behavior in the voltage setting (as $\alpha_{v,t}$ does not decay with time), while Min-Sum displays a convergent behavior in the flow setting (as $\beta_{v,t}$ decays with time). We will discuss this behavior more extensively in the following sections.

\begin{remark}
The fact that the Min-Sum algorithm does not converge is not surprising. Gaussian BP, see Remark \ref{rem:gaussianBP}, is known to suffer from divergent behaviors even in the absence of degeneracy, i.e., even when the quadratic form of interest is defined by a positive definite matrix, as observed numerically in \cite{RvR01}.
\end{remark}

For $d\ge 3$ we can provide a characterization of the error in the case of graphs with equal weights. We begin by defining and bounding some quantities of interest that we will need to state our results. The proof of the following proposition is given in Appendix \ref{app:technical}.

\begin{proposition}\label{prop:constant d t}
Fix $d\ge 3$. For $s\ge 1$ let $h_s:=\frac{1}{(d-1)^{s}-1}$, and let $\delta_0 := \frac{1}{d}$, $\delta_1 := \frac{1}{d-1}+ d-1$, and
$
	\delta_s := 
	\frac{1}{d-1}
	(2+\frac{(d-2)^2}{d-1}(1+h_{s+2})) \delta_{s-1} -\frac{1}{(d-1)^2}\delta_{s-2}
$
for $s\ge 2$. For any $t\ge 3$ define
\begin{align*}
	b_{d,t} &:= 
	\frac{1}{(d-1)^2}\bigg(1+(d-2)(1+h_{t+1})\bigg) \delta_{t-2}
	- \frac{(d-2)(1+h_{t+1})}{(d-1)^3} \delta_{t-3},\\
	c_{d,t} &:= 
	\frac{1}{d-1}\bigg(1+\frac{1}{(d-2)(1+h_{t})}\bigg) \delta_{t-2}
	- \frac{1}{(d-1)^2} \delta_{t-3}.
\end{align*}
Then we have $\frac{1}{2} \le \frac{d-2}{d-1} \le b_{d,t} \le c_{d,t} \le 1+\varepsilon_d < 4$ and $c_{d,t}\ge 1$, where $\varepsilon_d$ is a positive decreasing function of $d\ge 3$ so that $\varepsilon_d\rightarrow 0$ as $d\rightarrow \infty$ and $\varepsilon_3<3$.
\end{proposition}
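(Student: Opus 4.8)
The plan is to treat the statement as a self-contained analysis of the linear recurrence defining $\{\delta_s\}$, and then to read off the bounds on $b_{d,t},c_{d,t}$ from two-sided estimates on $\delta_{t-2},\delta_{t-3}$. The first step is a reformulation that exposes the structure of the recurrence. Using the elementary identity $\frac{2}{d-1}+\frac{(d-2)^2}{(d-1)^2}=1+\frac{1}{(d-1)^2}$, the defining relation for $\delta_s$ rewrites in first-difference form as
\[
\delta_s-\delta_{s-1}=\frac{1}{(d-1)^2}\,(\delta_{s-1}-\delta_{s-2})+\frac{(d-2)^2}{(d-1)^2}\,h_{s+2}\,\delta_{s-1}.
\]
In other words, the unperturbed recurrence ($h\equiv 0$) has characteristic roots $1$ and $(d-1)^{-2}$, and the $h$-terms enter only as a \emph{nonnegative} source. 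Writing $e_s:=\delta_s-\delta_{s-1}$, this is a first-order recurrence $e_s=\frac{1}{(d-1)^2}e_{s-1}+\frac{(d-2)^2}{(d-1)^2}h_{s+2}\delta_{s-1}$ whose source decays geometrically, since $h_j\le 2(d-1)^{-j}$ for $d\ge 3$.

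Second, I would establish positivity, monotonicity, and uniform two-sided bounds on $\{\delta_s\}$. A simultaneous induction gives $\delta_s>0$ and $e_s>0$ for all $s\ge 1$: the base cases use $\delta_0=1/d$ and $\delta_1=((d-1)^2+1)/(d-1)>\delta_0$, and the inductive step uses that both summands in the display are nonnegative for $d\ge 3$. Hence $\{\delta_s\}$ is strictly increasing and $\delta_s\ge\delta_1$ for $s\ge 1$, which supplies all the lower bounds I need. For the upper bound I would sum the recurrence for $e_s$: setting $S=\sum_{k\ge1}e_k$ and $\sigma=\sum_{j\ge4}h_j$, the geometric structure yields a self-consistent inequality of the form $S\le \frac{(d-1)^2}{d(d-2)}\big(e_1+\frac{(d-2)^2}{(d-1)^2}\sigma\,\sup_s\delta_s\big)$; since $\sigma$ is an explicitly small geometric tail, this closes into a bound $\delta_s\le C$ with $C=O(d)$, and in fact $\delta_s\to\delta_\infty$ with $\delta_\infty\approx d-1$. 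Keeping the same summation more precisely bounds the increments $e_s$ themselves (they decay like $(d-1)^{-s}$), which will be needed for the tight inequalities below.

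Third, I would substitute these estimates into $b_{d,t}$ and $c_{d,t}$. The key algebraic move is to eliminate $\delta_{t-3}$ in favour of $\delta_{t-2}$ and the increment $e_{t-2}=\delta_{t-2}-\delta_{t-3}$, since this makes the cancellations visible and keeps every coefficient nonnegative. For instance one finds $b_{d,t}=\frac{(d-1)+(d-2)^2(1+h_{t+1})}{(d-1)^3}\,\delta_{t-2}+\frac{(d-2)(1+h_{t+1})}{(d-1)^3}\,e_{t-2}$, with a parallel rewriting for $c_{d,t}$. The chain $\tfrac12\le\frac{d-2}{d-1}$ is elementary for $d\ge 3$. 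The lower bounds $\frac{d-2}{d-1}\le b_{d,t}$ and $1\le c_{d,t}$ then follow by dropping the nonnegative $h$- and $e_{t-2}$-contributions and using $\delta_{t-2}\ge\delta_1$, which reduces each to an explicit polynomial inequality with a positive slack of order $(d-1)^{-2}$. The upper bounds $b_{d,t}\le c_{d,t}\le 1+\varepsilon_d$ use the boundedness $\delta_{t-2}\le C$; the function $\varepsilon_d$ is simply defined as the resulting upper estimate minus one, and its positivity, monotonicity in $d$, and vanishing as $d\to\infty$ read off from the explicit constants.

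I expect the main obstacle to be twofold. First, the uniform upper bound $\delta_s\le C$ is the technical crux: all the upper estimates on $b,c$ and the ordering $b\le c$ rest on a good control of $\delta_{t-2}$ from above, so the self-consistent summation and the bookkeeping of the $h$-tail $\sigma$ must be carried out with explicit, $d$-controlled constants. Second, three of the inequalities are genuinely tight—$c_{d,t}-b_{d,t}$ reduces to a combination whose leading part is $\frac{1}{(d-2)(d-1)}\delta_{t-2}-O((d-1)^{-2})$, and $c_{d,t}\ge 1$ hinges on $\frac{1}{(d-2)(1+h_t)}\ge\frac{1}{d-1}$ up to lower-order terms—so one must track the signs of the $h$-corrections and use the refined increment estimate rather than a crude magnitude bound on $\delta_s$. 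The endpoint $d=3$, where $\frac{d-2}{d-1}=\tfrac12$ and $h$ is largest, is the most delicate and is where the numerical verification $\varepsilon_3<3$ must be done by hand.
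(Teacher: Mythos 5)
Your proposal is correct, and the key computations check out: the first-difference form $\delta_s-\delta_{s-1}=\frac{1}{(d-1)^2}(\delta_{s-1}-\delta_{s-2})+\frac{(d-2)^2}{(d-1)^2}h_{s+2}\delta_{s-1}$ is exact; the rewriting $b_{d,t}=\frac{(d-1)+(d-2)^2(1+h_{t+1})}{(d-1)^3}\delta_{t-2}+\frac{(d-2)(1+h_{t+1})}{(d-1)^3}e_{t-2}$ holds; the self-consistent summation closes because $(d-2)\sigma/d<1$ (at $d=3$ it gives $\sup_s\delta_s\lesssim 3.37$ against a true limit near $3.35$); and the lower bounds come out with slack, e.g.\ $b_{d,t}\ge\frac{d-2}{d-1}$ reduces to $(d^2-3d+3)(d^2-2d+2)\ge(d-2)(d-1)^3$, whose difference $2d^2-5d+4$ is positive for all $d$, while $c_{d,t}\ge\frac{\delta_1}{d-1}=\frac{(d-1)^2+1}{(d-1)^2}>1$ follows from $h_t\le h_3\le\frac{1}{d-2}$ exactly as you indicate. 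Your route is, however, genuinely different from the paper's. The paper compares $\delta_s$ with the unperturbed sequence $\hat\delta_s$ obtained by setting $h\equiv 0$, diagonalizes it explicitly via the characteristic roots $1$ and $(d-1)^{-2}$ (so $\hat\delta_s=c_++c_-(d-1)^{-2s}$), and proves the multiplicative sandwich $\hat\delta_s\le\delta_s\le\alpha\hat\delta_s$ with $\alpha=1+\frac{1}{(d-1)^2}$ by two inductions on weighted differences; the upper induction, on $\xi_s:=\alpha\hat\delta_s-\delta_s$, is the delicate step, needing the geometric estimates $\xi_{s-1}\ge\xi_1/(d-1)^{s-2}$ and $(d-1)^{s-1}h_{s+2}\le h_3$, and the bounds on $b_{d,t},c_{d,t}$ then transfer through hatted analogues, with $\varepsilon_d:=\bar\alpha-1$ in closed form. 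You never solve the characteristic equation: you treat the $h$-terms as a nonnegative source in a first-order recurrence for the increments $e_s$, which makes positivity and monotonicity one-line inductions, and you replace the paper's $\alpha$-induction by a fixed-point bound on $S=\sum_k e_k$; moreover, your elimination of $\delta_{t-3}$ in favor of $e_{t-2}$ plays precisely the role of the paper's inductive fact $(d-1)\delta_s\ge\delta_{s-1}\ge 0$ in the comparison $b_{d,t}\le c_{d,t}$. What the paper's sandwich buys is tight multiplicative control and a manifestly explicit, monotone $\varepsilon_d$; what yours buys is simpler inductions and no spectral bookkeeping, at the cost of a slightly cruder constant $C$ and the residual chores you already flag: run the self-consistent inequality on partial sums $S_N$ to get finiteness of $S$ before passing to the limit, and verify explicitly that your resulting $\varepsilon_d$ is decreasing with $\varepsilon_3<3$ (numerically $c_{3,t}\lesssim 3.07$, so there is ample room). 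One cosmetic caveat: $\delta_\infty\approx d-1$ only asymptotically --- at $d=3$ the limit is about $3.35$, not $2$ --- but nothing in your argument uses that heuristic.
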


For each $v\in V$, let $\vec{\mathbf{P}}_{v}$ be the law of a time-homogeneous \emph{non-backtracking} random walk $Y_{0},Y_{1},Y_2,\ldots$ on the nodes of $G$ with initial state $Y_0=v$, which is defined by $\vec{\mathbf{P}}_{v}(Y_0=w) := 1$ if $w=v$ and equal to $0$ otherwise, by $\vec{\mathbf{P}}_{v}(Y_1=w) := 1/ d$ if $\{w,v\}\in E$ and equal to $0$, and, for any $\{z,z'\}\in E$, by $\vec{\mathbf{P}}_{v}(Y_{t+1} = w | Y_{t}=z,Y_{t-1}=z') := \frac{1}{d-1}$ if $\{w,z\}\in E, w\neq z'$ and equal to $0$ otherwise.
For ease of notation, for $t\ge 0$, $w\in V$, define the row-stochastic matrices $P^{(t)}$ and $P^{(t,w)}$ as
\begin{align}
	P^{(t)}_{vz} := \vec{\mathbf{P}}_{v}(Y_t=z),
	\qquad
	P^{(t,w)}_{vz} := \vec{\mathbf{P}}_{v}(Y_t=z|Y_1\neq w),
	\qquad v,z\in V.
	\label{prob distrib voltage flow}
\end{align}

We can now state the main result for $d$-regular graphs with $d\ge 3$ and equal weights, namely, $W_{vw}=\omega$ for any $\{v,w\}\in E$, for some $\omega > 0$.

\begin{lemma}[Regular graphs, $d\ge 3$, equal weights]\label{lem:regulargraph}
Let $\vec G=(V,\vec E,W)$ be a connected directed $d$-regular graph, with $d\ge 3$, where each edge has the same weight $\omega>0$, and let $G=(V,E,W)$ be the corresponding undirected graph. For $v\in V$, $e=(v, w)\in\vec E$, $t\ge 3$,
\begin{align*}
	\nu^\star_{v} - \hat \nu^t_{v} =
	\frac{1}{b_{d,t}}\,
	(P^{(t+1)} \nu^\star)_v,
	\qquad
	x^\star_{e} - \hat x^t_{e} =
	\frac{\omega}{c_{d,t}} \frac{d-1}{d}
	(
	(P^{(t,w)} \nu^\star)_v
	-
	(P^{(t,v)} \nu^\star)_w
	),
\end{align*}
where $b_{d,t}$ and $c_{d,t}$ are defined and bounded as in Proposition \ref{prop:constant d t}.
\end{lemma}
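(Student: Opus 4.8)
I would prove both identities by unrolling the min-sum recursions into their computation tree and solving the resulting scalar and linear recursions explicitly, exploiting the $d$-regular equal-weight symmetry. The starting point is the fact that $t$ iterations of Algorithm \ref{alg:Min-sum voltages} (resp. Algorithm \ref{alg:Min-sum for min-cost network flow}) compute the exact solution of the induced quadratic problem on the depth-$(t+1)$ computation tree rooted at $v$ (resp. at $e$); for a $d$-regular graph this tree is the truncated universal cover, in which a root-to-leaf path projects to a non-backtracking walk on $G$, precisely because the message update at $w$ sums over $\partial w\setminus e$ and so never immediately reverses the incoming edge. This is exactly the combinatorial origin of the matrices $P^{(t+1)}$ and $P^{(t,w)}$. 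Concretely, I would work with the closed-form message recursions already established in Lemma \ref{lem:min-sum updates voltages} and Lemma \ref{lem:min-sum updates}, which under equal weights and regularity collapse the weight parameters $W^s_{e\to v}$ (resp. $R^s_{e\to v}$) to a single scalar $W^s$ (resp. $R^s$) independent of the directed edge.

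\textbf{Voltage identity.} Since all $W^s_{e\to v}$ coincide, the recursion of Lemma \ref{lem:min-sum updates voltages} reduces to the scalar map $W^s=\omega(d-1)W^{s-1}/(\omega+(d-1)W^{s-1})$, which with $W^0=\omega$ I can solve in closed form (its inverse is affine, giving a denominator of the form $(d-1)^{s+1}-1$, whence the quantities $h_s$ of Proposition \ref{prop:constant d t}). The linear parameter $w^s_{e\to v}$ obeys an affine recursion driven by the external currents $b_w$; I would unroll it over $s=t,t-1,\dots$ so that $w^t_{e\to v}$ becomes a weighted sum over length-$t$ non-backtracking paths out of $v$, the weights being products of the (now known) ratios $\omega/(\omega+(d-1)W^{s-1})$ together with $b$ collected along the path. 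Substituting into the output formula $\hat\nu^t_v=(b_v-\sum_{e\in\partial v}w^t_{e\to v})/\sum_{e\in\partial v}W^t_{e\to v}$ and then invoking $L\nu^\star=b$ to rewrite each $b_w$ as $\sum_{z}L_{wz}\nu^\star_z$, the interior contributions telescope and all terms except those at graph-distance $t+1$ cancel, leaving the single expectation $(P^{(t+1)}\nu^\star)_v=\sum_z P^{(t+1)}_{vz}\nu^\star_z$ scaled by the accumulated normalization constant, which I would identify with $1/b_{d,t}$.

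\textbf{Flow identity.} The flow case runs parallel: Lemma \ref{lem:min-sum updates} gives a scalar recursion for $R^s_{e\to v}$ and an affine recursion for $r^s_{e\to v}$, again driven by $b_w$ along non-backtracking paths, but now carrying the incidence signs $A_{wf}$, which is what converts the bare walk law into the conditional laws $P^{(t,w)}$ and $P^{(t,v)}$ (conditioning on $Y_1\neq w$ encodes the exclusion of the edge along which the flow message enters). Plugging into $\hat x^t_e=-(r^t_{e\to v}+r^t_{e\to w})/(R^t_{e\to v}+R^t_{e\to w}-R_{ee})$, using Ohm's law $x^\star_e=\omega(\nu^\star_v-\nu^\star_w)$ and the KKT conditions on the tree to express the interior increments through $\nu^\star$, the two endpoint messages combine into the antisymmetric difference $(P^{(t,w)}\nu^\star)_v-(P^{(t,v)}\nu^\star)_w$, with prefactor $\tfrac{\omega}{c_{d,t}}\tfrac{d-1}{d}$ emerging from the denominator $R^t_{e\to v}+R^t_{e\to w}-R_{ee}$ and the $1/d$ transition at the root.

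\textbf{Main obstacle.} The genuine difficulty is not setting up the tree or unrolling the recursions but the exact bookkeeping of the accumulated normalization constants, i.e. showing that the sum of path-weight products over the computation tree equals precisely $1/b_{d,t}$ (resp. $c_{d,t}$) as defined through the second-order recursion for $\delta_s$ in Proposition \ref{prop:constant d t}. Establishing that recursion requires tracking how effective conductances compose across the branching of the $(d-1)$-ary tree and verifying the $\delta_s$/$h_s$ algebra, and simultaneously confirming that after substituting $L\nu^\star=b$ every contribution strictly inside the tree cancels so that only the distance-$(t+1)$ boundary term (a single non-backtracking expectation) survives. Getting these cancellations and the constant to match exactly, rather than up to an error term, is the crux of the argument.
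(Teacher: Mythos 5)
Your proposal is correct in outline, but it takes a genuinely different route from the paper, which never unrolls the message recursions of Lemma \ref{lem:min-sum updates voltages} and Lemma \ref{lem:min-sum updates} directly. Instead, the paper (i) perturbs the \emph{initial} messages linearly by parameters $p_{e\rightarrow v}$; (ii) proves a fix-point theorem: for the choice $p^\star_{e\rightarrow v}=-A_{we}\nu^\star_w$ (resp.\ $-W_{vw}\nu^\star_w$) the perturbed algorithm is exact at \emph{every} $t$, because the KKT (resp.\ first-order) conditions lift from $G$ to the computation-tree problem (Theorems \ref{thm: opt choice p} and \ref{thm: opt choice p voltage}); (iii) since the problem is quadratic, writes the error exactly as $\hat x^t(p^\star)-\hat x^t(0)$ and differentiates in $p$, obtaining a closed formula as a sum over the tree's leaves weighted by entries of the inverse restricted Laplacian $\bar\LL^{-1}$ of the tree (Theorems \ref{thm:errorcharacterization} and \ref{thm:errorcharacterization voltage}); and (iv) evaluates those entries via hitting probabilities of the ordinary (backtracking) random walk on the tree (Lemma \ref{lem:computationdevice}, Appendix \ref{sec:Laplacians and random walks}), reduced by series/parallel conductance identities to a Gambler's-ruin chain whose solution produces $b_{d,t}$ and $c_{d,t}$. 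Your route --- scalar collapse of $W^s_{e\rightarrow v}$ by symmetry, the closed form $W^s=\omega\frac{d-2}{d-1}(1+h_{s+1})$ obtained from the affine recursion for $\omega/W^s$, unrolling of the affine recursion for $w^s_{e\rightarrow v}$ into sums over non-backtracking paths, then substitution of $b=L\nu^\star$ --- is viable, and the cancellation you assert does hold: writing $\gamma_s=\omega/(\omega+(d-1)W^{s-1})$ for the normalizer at step $s$, the interior terms cancel by the identity $1/\gamma_{s+1}+(d-1)\gamma_s=d$ (equivalent to the Riccati recursion for $W^s$), the depth-$t$ boundary term vanishes because $1/\gamma_1=d$ under the initialization $W^0=\omega$, the $\nu^\star_v$ term cancels because $\omega(1-\gamma_t)=W^t$, and the path counts recombine via the recursion of Proposition \ref{proof:Non-backtracking random walks}. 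What each approach buys: yours avoids the random-walk and electrical-network machinery altogether and is more elementary for this symmetric case; the paper's perturbation/fix-point scheme is heavier here but is the generalizable contribution (it is precisely how the authors handle constrained problems --- your flow sketch in fact re-imports it when you invoke ``the KKT conditions on the tree''), and it works on weighted, non-symmetric instances such as the $d=2$ case of Lemma \ref{lem:ring}, where no scalar collapse is available.

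One substantive caution about your stated ``main obstacle.'' The second-order $\delta_s$ recursion and the composition of effective conductances across the branching are artifacts of the paper's random-walk representation, not of your route: in your unrolling the prefactor is the single telescoping product $\omega\gamma_1\cdots\gamma_t(d-1)^t/(dW^t)$ (voltage case), and since $\gamma_s=W^s/((d-1)W^{s-1})$ this product collapses completely, leaving exactly the normalization $1/(d(d-1)^t)$ of $P^{(t+1)}$ --- i.e., your derivation, carried to completion, outputs the voltage identity with constant formally equal to $1$ in place of $1/b_{d,t}$. You should therefore not assume, as your proposal does, that the accumulated constant ``would be identified with'' the $b_{d,t}$ of Proposition \ref{prop:constant d t}, which is not identically $1$ (e.g., its defining recursion gives $b_{3,3}\neq 1$); small numerical experiments on $3$-regular graphs (e.g., the triangular prism at $t=3$) agree with the collapsed product rather than with the recursive constant, so reconciling your closed form with the paper's Gambler's-ruin algebra is a genuine discrepancy to be resolved, not mere bookkeeping. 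This is the one place where your plan, as written, defers to the paper's constants instead of trusting what its own mechanics produce.
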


This result characterizes the error committed by the Min-Sum algorithm in terms of the distribution of properly-defined non-backtracking random walks. Several papers have previously related the convergence properties of message-passing algorithms to non-backtracking random walks. For instance, we refer to \cite{ConsensusProp} for an application to consensus optimization (this paper also analyzes $d$-regular graphs), and to \cite{CIT-067} for a more recent application to the Stochastic Block Model.

Given the error characterizations provided by Lemma \ref{lem:ring} and Lemma \ref{lem:regulargraph}, we now investigate and discuss the convergence properties of the Min-Sum algorithm.

\subsection{Convergence Results for the Voltage Problem}\label{sec:Convergence results for the voltage problem}
We first analyze the convergence behavior of the Min-Sum algorithm when applied to the voltage problem: Algorithm \ref{alg:Min-Sum voltages, quadratic messages}. Following the literature on Laplacian solvers, as already noticed in Section \ref{sec:Voltage and flow problem}, we perform the analysis in the $L$-norm, which is defined as $\| \nu \|_L := \sqrt{\nu^TL\nu}$ for any $\nu\in\R^V$, where $\nu^TL\nu = \sum_{\{v,w\}\in E} W_{vw} (\nu_v-\nu_w)^2$.

In the case $d=2$, as previously discussed, the results of Lemma \ref{lem:ring} attest that the output $\hat \nu^t$ of Algorithm \ref{alg:Min-Sum voltages, quadratic messages} does \emph{not} converge, as it keeps oscillating as a function of $t$. This is particularly evident in the case of equal weights, as the following corollary attests.

\begin{corollary}[Regular graphs, $d= 2$]\label{cor:not convergence}
Consider the setting of Lemma \ref{lem:ring} with equal weights $W_{vw}=\omega$. For $t\ge 2$,
$
	\| \nu^\star - \hat \nu^t \|_L^2
	= \frac{1}{2} \| \nu^\star \|_L^2
	+ \frac{\omega}{2} \sum_{v=0}^{n-1}
	\nu^\star_{v}
	(
	2\nu^\star_{\rho(v+2t+2)} 
	- \nu^\star_{\rho(v+2t+3)}
	- \nu^\star_{\rho(v+2t+1)}
	).
$
\end{corollary}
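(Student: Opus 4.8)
The plan is to substitute the closed-form error expression from Lemma \ref{lem:ring}, specialized to equal weights (where $\alpha_{v,t}=1/2$), directly into the definition of the $L$-norm and then simplify by exploiting the cyclic structure of the graph. Writing the error as $\epsilon_v := \nu^\star_v - \hat\nu^t_v = \tfrac12\big(\nu^\star_{\rho(v-t-1)} + \nu^\star_{\rho(v+t+1)}\big)$ and recalling that on the cycle the edge set is $\{\{\rho(v),\rho(v+1)\} : v=0,\ldots,n-1\}$ with common weight $\omega$, I would start from
$$
	\| \nu^\star - \hat\nu^t \|_L^2 = \omega \sum_{v=0}^{n-1} \big(\epsilon_v - \epsilon_{\rho(v+1)}\big)^2 .
$$

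To keep the index bookkeeping manageable, I would introduce the discrete gradient $D_u := \nu^\star_{\rho(u)} - \nu^\star_{\rho(u+1)}$, so that the telescoping of the two shifted copies of $\nu^\star$ gives $\epsilon_v - \epsilon_{\rho(v+1)} = \tfrac12\big(D_{v-t-1} + D_{v+t+1}\big)$. Squaring and summing over a full period produces two diagonal contributions $\tfrac14\sum_v D_{v-t-1}^2$ and $\tfrac14\sum_v D_{v+t+1}^2$ together with a cross term $\tfrac12\sum_v D_{v-t-1}D_{v+t+1}$. Since every sum runs over a complete cycle, any shift of the summation index leaves it unchanged, so each diagonal term equals $\tfrac14\sum_v D_v^2 = \tfrac1{4\omega}\|\nu^\star\|_L^2$; multiplying the two diagonal pieces by the overall factor $\omega$ then yields exactly the leading term $\tfrac12\|\nu^\star\|_L^2$.

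The remaining work is the cross term. Reindexing by $u = v-t-1$ turns it into $\tfrac12\sum_u D_u D_{u+2t+2}$, and I would expand $D_u D_{u+2t+2}$ as the product of two differences into four bilinear terms in $\nu^\star$. Shifting the summation index by one in the two ``mixed'' terms (again legitimate by cyclic invariance) collapses everything into $\sum_u \nu^\star_{\rho(u)}\big(2\nu^\star_{\rho(u+2t+2)} - \nu^\star_{\rho(u+2t+3)} - \nu^\star_{\rho(u+2t+1)}\big)$; multiplying by $\omega/2$ reproduces the second term in the statement and completes the identity.

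I do not anticipate a genuine conceptual obstacle here, as the result is a direct computation once the error formula of Lemma \ref{lem:ring} is in hand; the only delicate point is the index bookkeeping. One must verify that every sum ranges over a full period of length $n$ so that the cyclic shifts are valid, and track the mod-$n$ offsets carefully when collapsing the four bilinear terms into the single symmetric combination appearing in the corollary. The qualitative conclusion the corollary is meant to convey --- that $\hat\nu^t$ does not converge --- then follows because the leading term $\tfrac12\|\nu^\star\|_L^2$ persists regardless of $t$, while the second term merely oscillates.
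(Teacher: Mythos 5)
Your proposal is correct and follows essentially the same route as the paper's proof: substitute the equal-weight error formula $\nu^\star_v-\hat\nu^t_v=\tfrac12\bigl(\nu^\star_{\rho(v-t-1)}+\nu^\star_{\rho(v+t+1)}\bigr)$ from Lemma \ref{lem:ring} into the $L$-norm, expand the square into two diagonal terms and a cross term, use cyclic shift invariance to identify each diagonal term with $\tfrac{1}{4\omega}\|\nu^\star\|_L^2$, and relabel indices in the cross term to collapse the four bilinear pieces into $\sum_v \nu^\star_v\bigl(2\nu^\star_{\rho(v+2t+2)}-\nu^\star_{\rho(v+2t+3)}-\nu^\star_{\rho(v+2t+1)}\bigr)$. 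Your discrete-gradient notation $D_u$ is merely a cosmetic repackaging of the paper's ``grouping and relabeling'' step, and all your index manipulations check out.
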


\begin{proof}
The proof follows immediately from Lemma \ref{lem:ring} noticing that $\| \nu^\star - \hat \nu^t \|_L^2$ equals
$
	\omega \sum_{v=0}^{n-1} (\nu^\star_{v+1} - \hat \nu^t_{v+1}
	- (\nu^\star_{v} - \hat \nu^t_{v}))^2
	= \frac{\omega}{4} \sum_{v=0}^{n-1}
	(
	\nu^\star_{\rho(v-t)} - \nu^\star_{\rho(v-t-1)}
	+ \nu^\star_{\rho( v+t+2)}
	- \nu^\star_{\rho( v+t+1)}
	)^2,
$
which reads
$
	\frac{1}{2} \| \nu^\star \|_L^2
	+ \frac{\omega}{2} \sum_{v=0}^{n-1}
	(
	\nu^\star_{\rho(v-t)} - \nu^\star_{\rho(v-t-1)}
	)
	(
	\nu^\star_{\rho( v+t+2)} - \nu^\star_{\rho( v+t+1)}
	).
$
The result follows by grouping and relabeling the terms in the sum.
\end{proof}

The oscillatory behavior of the Min-Sum algorithm on cycle graphs in the voltage problem marks a clear difference with what happens in the flow problem where, as we will see in Corollary \ref{cor:cycle conv} below, the algorithm \emph{does} converge to the problem solution on cycle graphs.

Let us now consider the case $d\ge 3$. In this case, as Corollary \ref{cor:convergence 2} below attests, the convergence of the Min-Sum algorithm in the $L$-norm can be controlled by the total variation (TV) distance between the probability distributions of non-backtracking random walks that originate from neighbor nodes. In this framework, it is natural to also consider a modification of the $L$-norm normalized by the probability mass that is left out by the overlap of the above-mentioned random walks, which coincides with the TV distance between the walks. We give a general definition of this norm, and then present some of its properties.

\begin{definition}[TV-normalized $L$-norm]
Given a row-stochastic matrix $M\in\R^{V\times V}$, define the pseudo-norm $\| \,\cdot\, \|_{L,M}$ as
$
	\| \nu \|_{L,M}^2 := \sum_{\{v,w\}\in E} W_{vw} 
	\big( \frac{(M\nu)_v - (M\nu)_w}{\| M_{v} - M_{w} \|_{TV}} \big)^2,
$
for any $\nu\in\R^V$, where $\| M_{v} - M_{w} \|_{TV} := \frac{1}{2}\sum_{z\in V}|M_{vz} - M_{wz}|$ is the TV distance between the probability mass functions defined by the $v$-th and $w$-th rows of the matrix $M$, respectively.
\end{definition}

The fact that $\| \,\cdot\, \|_{L,M}$ is a pseudo-norm follows from the fact that $\| \,\cdot\, \|_{L}$ itself is a pseudo-norm. Indeed, note that $\| \nu \|_{L,M} = \| M\nu \|_{L'}$, where $L'$ is the Laplacian of the weighted graph $(V,E,W')$, with $W'_{vw} := W_{vw}/\| M_{v} - M_{w} \|_{TV}^2$. By a well-known result (see Proposition 4.3 in \cite{LPW09}, for instance), we have $\| M_{v} - M_{w} \|_{TV} = \sum_{z\in V: M_{vz} > M_{wz}} (M_{vz} - M_{wz}) = \sum_{z\in V: M_{vz} < M_{wz}} (M_{vz} - M_{wz})$, which shows how the TV-normalized $L$-norm takes into account the overlap of the distributions defined by the rows of the matrix $M$.

\begin{proposition}\label{property TV distance}
For any row-stochastic matrix $M\in\R^{V\times V}$ and $\nu\in\R^V$ we have:
\begin{enumerate}[(i)]
\item 
$\| \nu \|_{L,I} = \| \nu \|_{L}$, where $I$ is the identity matrix;
\item
$
	\| M\nu \|_L \le \max_{\{v,w\}\in E} \| M_{v} - M_{w} \|_{TV}
	\,\| \nu \|_{L,M};
$
\item $\| \nu \|_{L} \le (\sum_{\{v,w\}\in E} W_{vw})^{1/2} \max_{\{v,w\}\in E} |\nu_v - \nu_w| \le (\sum_{\{v,w\}\in E} W_{vw})^{1/2} \operatorname{osc}(\nu)$;
\item $\| \nu \|_{L,M} \le (\sum_{\{v,w\}\in E} W_{vw})^{1/2} \operatorname{osc}(\nu)$;
\end{enumerate}
where the oscillation of $\nu$ is defined as $\operatorname{osc}(\nu) := \max_{v,w\in V} |\nu_v - \nu_w|$.
\end{proposition}

\begin{proof}
Properties $(i),(ii)$ and $(iii)$ are straightforward to verify. Property $(iv)$ follows from $|(M\nu)_v - (M\nu)_w| \le \| M_{v} - M_{w} \|_{TV} \operatorname{osc}(\nu)$. See \cite{Geo11}, Chapter 8, for instance.
\end{proof}

We now state a convergence result for the Min-Sum algorithm $\hat\nu^t$ for the voltage problem, in the case of $d$-regular graphs, $d\ge 3$, with the same edge weights.
As the $L$-norm involves a sum over neighbour nodes, and as Lemma \ref{lem:regulargraph} shows that the error of Min-Sum is characterized by the distribution of a (non-overlapping) random walk, we immediately see that Min-Sum for the voltage problem does not converge on bipartite graphs as in this case random walks that start at two neighbor locations never overlap. For this reason, we also consider an averaged version of the Min-Sum algorithm where the output of two consecutive iterations is properly averaged: $
	\hat \nu_{\emph{ave}}^t := 
	\frac{b_{d,t-1}}{b_{d,t-1} + b_{d,t}} \hat \nu^{t-1} 
	+ 
	\frac{b_{d,t}}{b_{d,t-1} + b_{d,t}} \hat \nu^{t}.
$
Let $Q^{(t)}:=(P^{(t-1)} + P^{(t)})/2$ and define
\begin{align*}
	\gamma(t) := \max_{\{v,w\}\in E} \| P^{(t)}_{v} - P^{(t)}_{w} \|_{TV},
	\qquad\gamma_{\text{ave}}(t) := \max_{\{v,w\}\in E} \| Q^{(t)}_{v} - Q^{(t)}_{w} \|_{TV}.
\end{align*}

\begin{corollary}[Regular graphs, $d\ge 3$, equal weights]\label{cor:convergence 2}
Setting of Lemma \ref{lem:regulargraph}. For $t\ge 4$,
\begin{align*}
	\|\nu^\star - \hat \nu^t \|_L
	&\le 
	2\, \gamma(t+1) \,
	\| \nu^\star \|_{L, P^{(t+1)}}
	\le 
	2\, \gamma(t+1) \, \sqrt{m} \, 
	\operatorname{osc}(\nu^\star),\\
	\|\nu^\star - \hat \nu_{\emph{ave}}^t \|_L
	&\le 
	2\, \gamma_{\emph{ave}}(t+1) \,
	\| \nu^\star \|_{L, Q^{(t+1)}}
	\le 
	2\, \gamma_{\emph{ave}}(t+1) \, \sqrt{m} \,
	\operatorname{osc}(\nu^\star).
\end{align*}
\end{corollary}

\begin{proof}
From Lemma \ref{lem:regulargraph},
$
	\nu^\star - \hat \nu^t =
	P^{(t+1)} \nu^\star / b_{d,t}
$. Using property $(ii)$ in Proposition \ref{property TV distance} with $M=P^{(t+1)}$ we get $
	\| \nu^\star - \hat \nu^t \|_L^2
	=
	\| P^{(t+1)} \nu^\star \|_L^2 / b_{d,t}^2
	\le 
	4 \, \gamma(t+1)^2\,
	\| \nu^\star \|_{L,P^{(t+1)}}^2,
$
as $b_{d,t} \ge 1/2$ by Proposition \ref{prop:constant d t}.
As for $t\ge 4$ Lemma \ref{lem:regulargraph} yields
$
	(b_{d,t-1} + b_{d,t}) (\nu^\star - \hat \nu_{\text{ave}}^t)
	=
	b_{d,t-1}(\nu^\star - \hat \nu^{t-1})
	+ b_{d,t}(\nu^\star - \hat \nu^t)
	= 2 Q^{(t+1)} \nu^\star,
$
we get
$
	\| \nu^\star - \hat \nu_{\text{ave}}^t \|_L^2
	= 4
	\| Q^{(t+1)} \nu^\star \|_L^2 / (b_{d,t-1} + b_{d,t})^2
	\le 
	4\,\gamma_{\text{ave}}(t+1)^2\,
	\| \nu^\star \|_{L,Q^{(t+1)}}^2.
$
The proof is concluded using Property $(iv)$ in Proposition \ref{property TV distance}.
\end{proof}

Corollary \ref{cor:convergence 2} shows that the convergence in the $L$-norm of the Min-Sum estimate $\hat \nu^t$ and its averaged version $\hat \nu_{\text{ave}}^t$ is controlled by the behavior with time of the quantities $\gamma(t+1)$ and $\gamma_{\text{ave}}(t+1)$, respectively, which represent uniform bounds on the TV distance between probability distributions of non-backtracking random walks that make $t+1$ steps originating from neighbor nodes.
Proposition \ref{proof:Non-backtracking random walks} in Appendix \ref{app:technical} yields a recursive formula to compute the matrices $P^{(t)}$'s in terms of the adjacency matrix of the undirected unweighted graph $(V,E)$. This formula can then be used to compute the quantities $\gamma(t)$ and $\gamma_{\text{ave}}(t)$. Here we present numerical results for a few classes of $d$-regular graphs (for convenience of presentation, we include also the numerical results related to the quantities $\widetilde\gamma(t)$ and $\widetilde\gamma_{\text{ave}}(t)$ that control the convergence behavior of the Min-Sum algorithm in the flow problem and that will be defined later on in Section \ref{sec:flow result}).

\begin{example}[Complete graphs]
A $n$-complete graph is a graph with $n$ nodes where every pair of nodes is connected by an edge. A $n$-complete graph is $d$-regular with degree $d=n-1$. Figure \ref{fig:complete decay} (left).
The quantity $\gamma(t)$ is seen to decay (albeit not monotonically) like an exponential function $\alpha(d) \exp(-\beta(d) t)$, where $\alpha(d)>0$ is a decreasing function of $d$ and $\beta(d)>0$ is an increasing function of $d$. Figure \ref{fig:complete decay} (center).
\begin{figure}[h!]
   \centering      
   \hspace*{-.35in}
   \raisebox{-0.5\height}{\includegraphics[height=4.2cm]{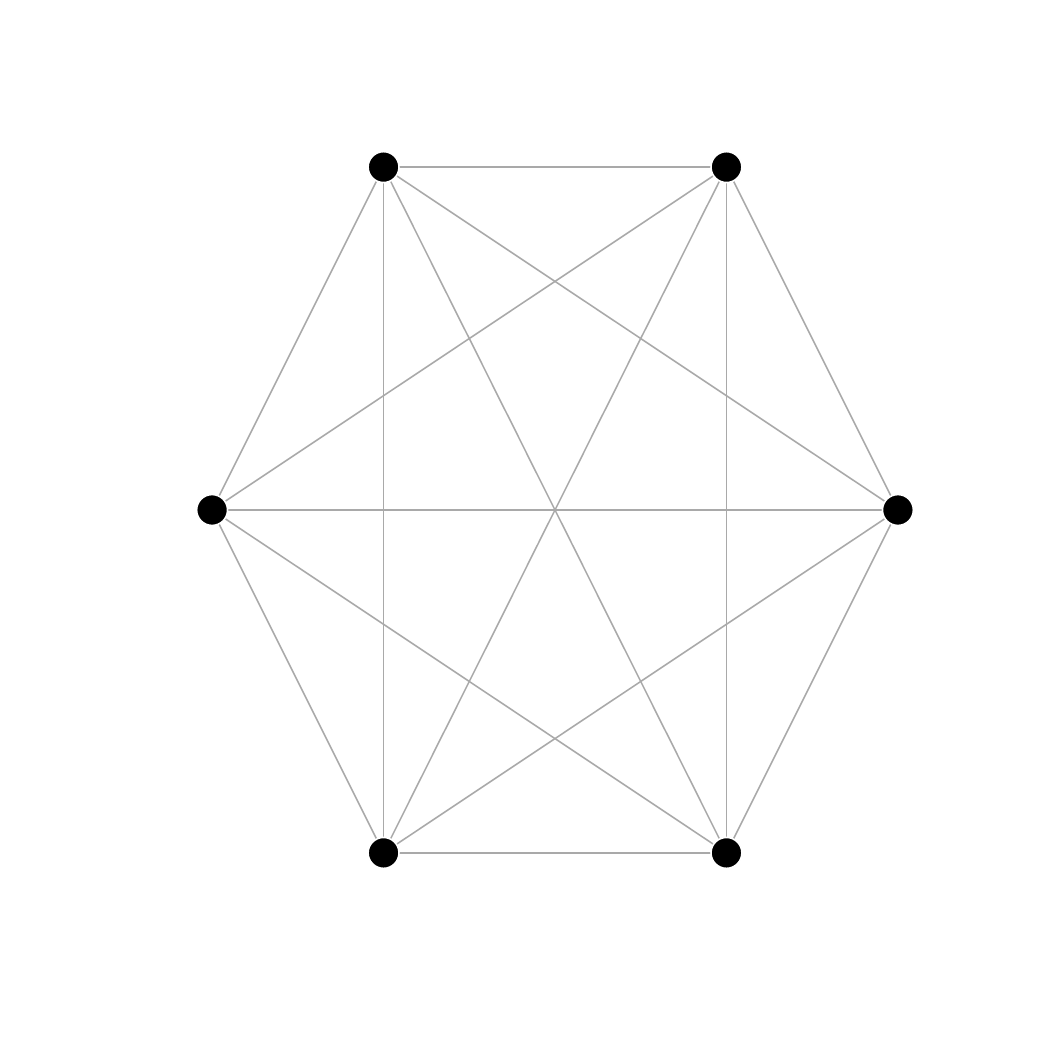}}
   \hspace*{-.05in}
   \raisebox{-0.5\height}{\includegraphics[height=4cm]{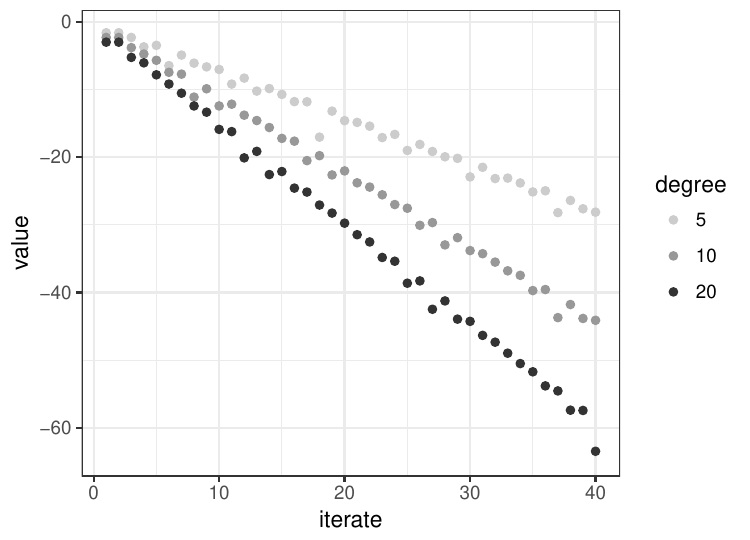}}
   \hspace*{0.1in}
   \raisebox{-0.5\height}{\includegraphics[height=4cm]{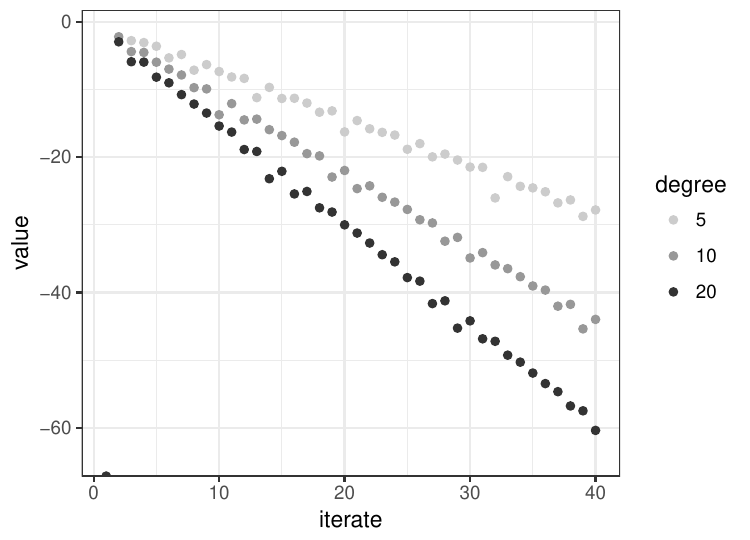}}
 \caption{A $6$-complete graph (left). Plot of $\log(\gamma(t))$ (center) and of $\log(\widetilde\gamma(t))$ (right) versus time $t$ for various $(d+1)$-complete graphs.}
 \label{fig:complete decay}
\end{figure}
\end{example}

\begin{example}[Connected cycles]\label{ex:connected cycle}
A $d/2$-connected cycle, for $d \ge 4$ even, is a cycle graph where each node is connected to its $d$ nearest nodes. Figure \ref{fig:ring decay} (left). The quantity $\gamma(t)$ is seen to decay like a polynomial function $\alpha(d)/\sqrt{t}$, where $\alpha(d)>0$ is an increasing function of $d$. The decay of $\gamma(t)$ is independent\footnote{It is implied that the decay with time is independent of $n$ and $m$ as long as time $t$ satisfies $t \lesssim D/d$, where $D$ is the diameter of the graph. Once the random walks reach the periodic boundary conditions the decay is faster (exponential instead of polynomial) due to the extra overlaps of the walks.} of $n$ and $m$. Figure \ref{fig:ring decay} (center).
\begin{figure}[h!]
   \centering      
   \hspace*{-.35in}
   \raisebox{-0.5\height}{\includegraphics[height=4.2cm]{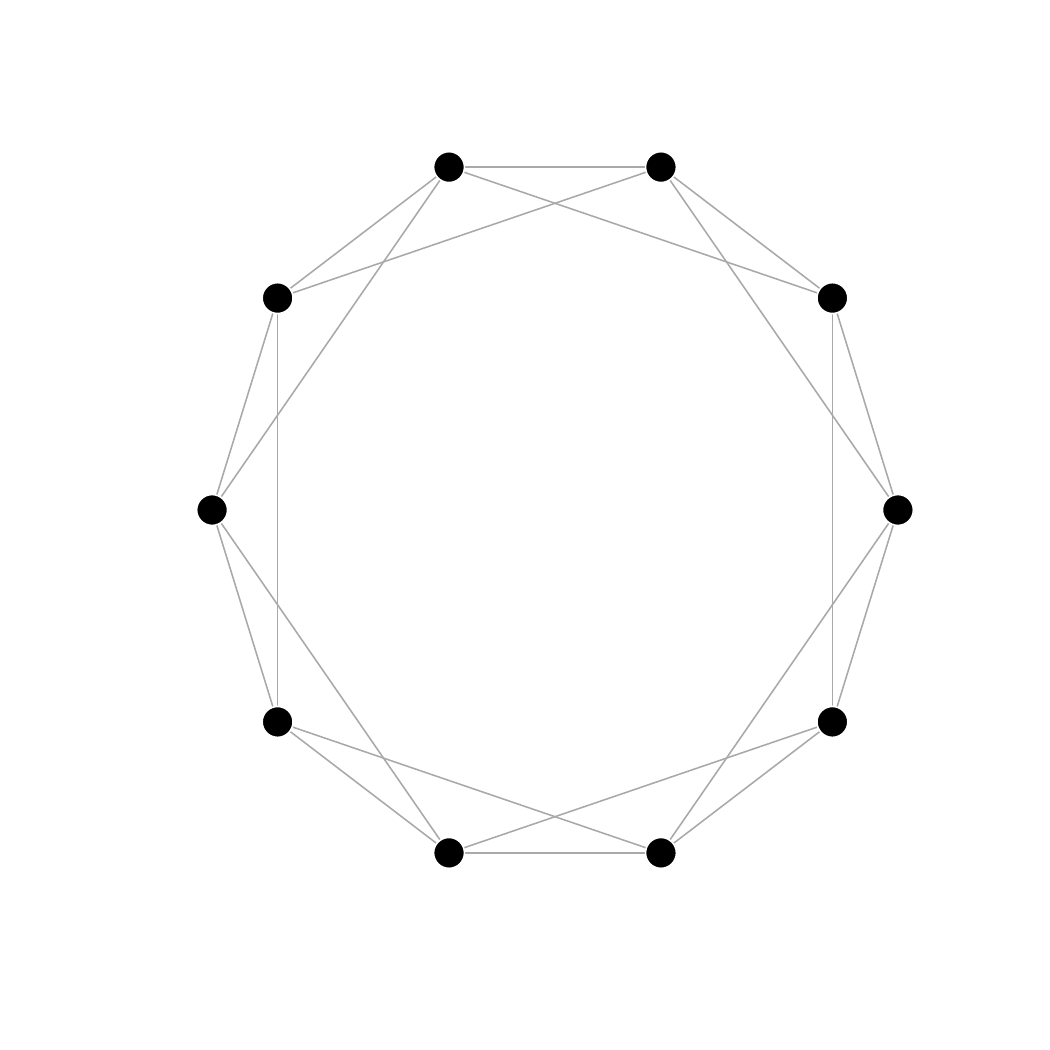}}
   \hspace*{-.05in}
   \raisebox{-0.5\height}{\includegraphics[height=4cm]{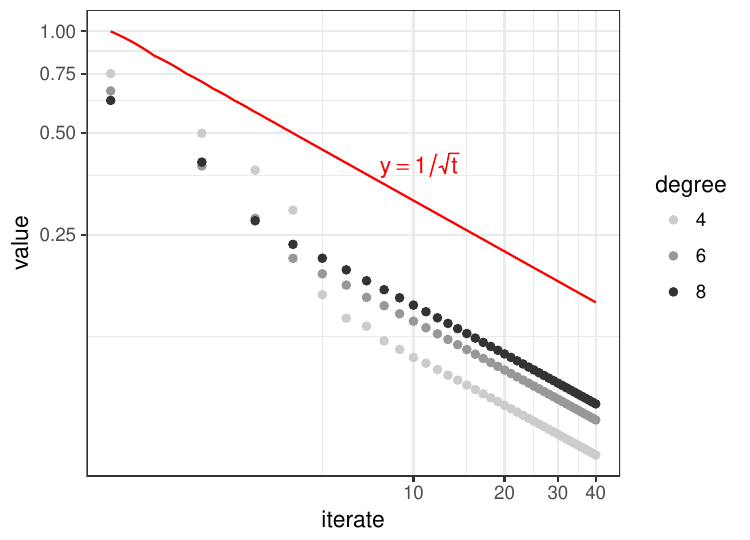}}
   \hspace*{0.1in}
   \raisebox{-0.5\height}{\includegraphics[height=4cm]{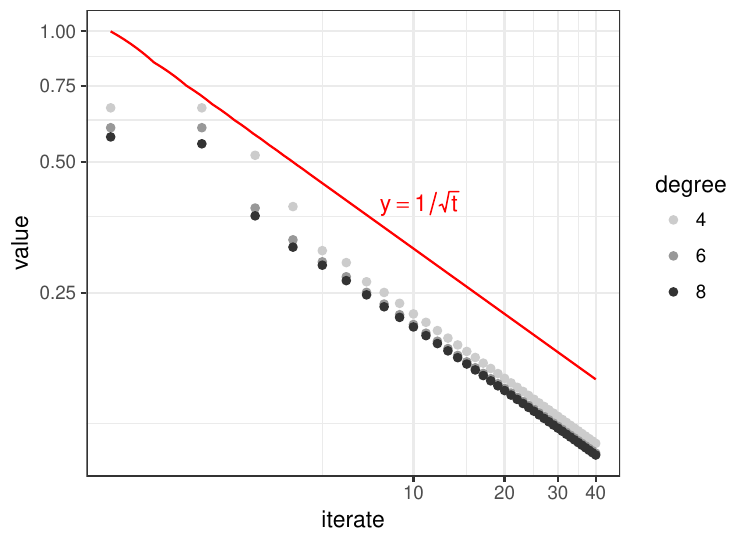}}
 \caption{A $2$-connected cycle (left). Plot in logarithmic scale of $\gamma(t)$ (center) and of $\widetilde\gamma(t)$ (right) versus time $t$ for various $d/2$-connected cycles.}
 \label{fig:ring decay}
\end{figure}
\end{example}

\begin{example}[Tori]
A $d/2$-dimensional torus, for $d \ge 4$ even, is a $d/2$-dimensional grid with periodic boundary conditions. Figure \ref{fig:torus decay} (left). The quantity $\gamma_{\emph{ave}}(t)$ is seen to decay like a polynomial function $\alpha(d)/\sqrt{t}$, where $\alpha(d)>0$ is an increasing function of $d$. The decay of $\gamma_{\emph{ave}}(t)$ is independent of $n$ and $m$ (the same considerations as in Example \ref{ex:connected cycle} hold). Figure \ref{fig:torus decay} (center).
\begin{figure}[h!]
   \centering
   \hspace*{0.005in}
   \raisebox{-0.5\height}{\includegraphics[height=3.3cm]{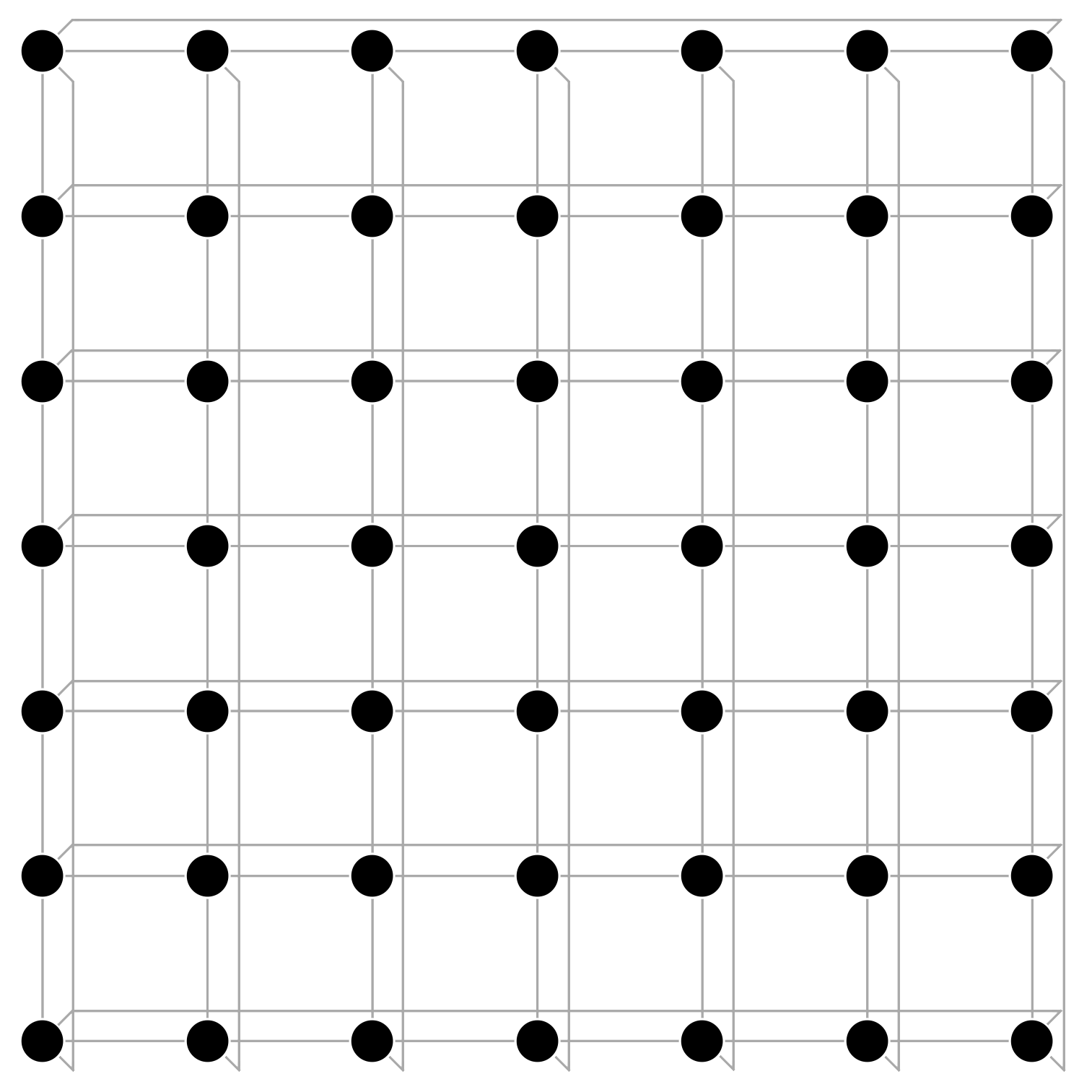}}
   \hspace*{-.05in}
   \raisebox{-0.5\height}{\includegraphics[height=4cm]{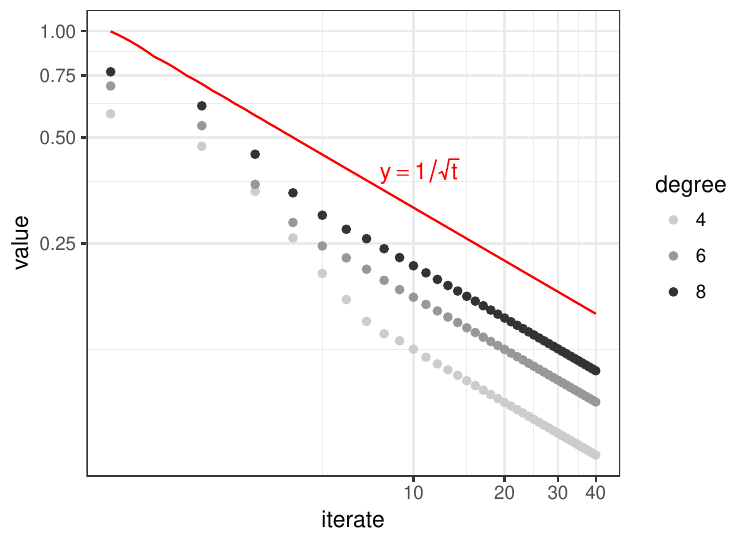}}
   \hspace*{0.1in}
   \raisebox{-0.5\height}{\includegraphics[height=4cm]{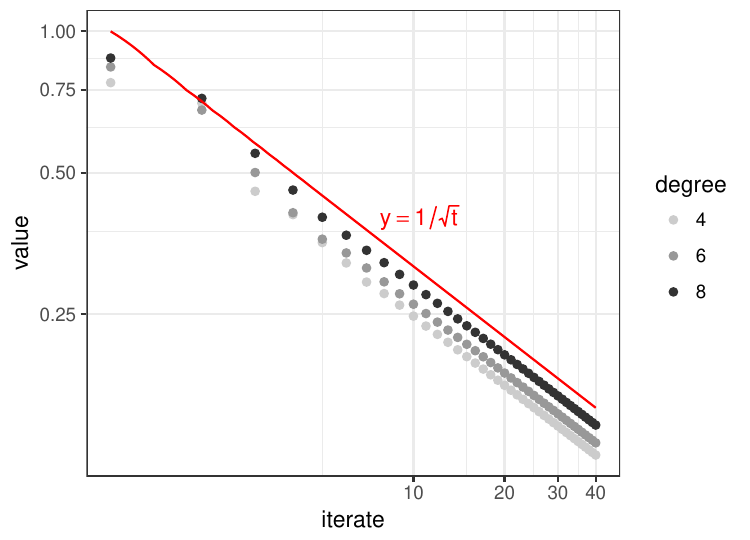}}
 \caption{A $2$-dimensional torus (left). Plot in logarithmic scale of $\gamma_{\text{ave}}(t)$ (center) and of $\widetilde\gamma_{\text{ave}}(t)$ (right) versus time $t$ for various $d/2$-dimensional tori.
}
\label{fig:torus decay}
\end{figure}
\end{example}

\begin{remark}[Dobrushin's ergodicity coefficient]
For Markov chains with transition kernel $P$, there is a large literature investigating the contraction coefficient $\max_{v,w\in V} \| P^{t}_{v} - P^{t}_{w} \|_{TV}$ (a.k.a. Dobrushin's ergodicity coefficient), where $t$ here represents the minimal time so that the coefficient is strictly less than $1$. The Dobrushin's coefficient is fundamentally different from $\gamma(t)$. The former involves a worst-case bound over all nodes in the graph, while the latter involves a worst-case bound only over nearest-neighbors nodes. As a result, the Dobrushin's ergodicity coefficient typically depends on $n$ and $m$, even when $\gamma(t)$ does not, such as in the examples above.
\end{remark}

The fast solvers analyzed in the literature (see Introduction, Section \ref{sec:introduction}, and Section \ref{sec:Voltage and flow problem}) achieve a quasi-linear running time $\tilde O(m\log^c n \log 1/\varepsilon)$ to return an estimate $\hat\nu$ that satisfies
$
	\| \nu^\star - \hat\nu \|_L \le \varepsilon \| \nu^\star \|_L.
$
As per the results in Corollary \ref{cor:convergence 2}, to compare the performance of the Min-Sum algorithm to these solvers we need to relate the $TV$-normalized $L$-norm to the ordinary $L$-norm. 
It is easy to construct examples where $\| \nu^\star \|_L < \| \nu^\star \|_{L,M}$ or where $\| \nu^\star \|_L > \| \nu^\star \|_{L,M}$, for a certain row-stochastic matrix $M$, while both norms satisfy $\| \nu^\star \|_L \le \sqrt{m} \operatorname{osc}(\nu)$ (property (iii) in Proposition \ref{property TV distance}) and $\| \nu^\star \|_{L,M} \le \sqrt{m} \operatorname{osc}(\nu)$ (property (iv) in Proposition \ref{property TV distance}). 
The following theorem summarizes the algorithmic complexity of the Min-Sum algorithm $\nu^t$ in the $L$-norm, under the assumption that the two norms of interest can be compared, and under the assumption that $\gamma(t)$ decays exponentially or polynomially, as attested numerically in the examples above (an analogous statement holds for the averaged Min-Sum algorithm $\nu_{\text{ave}}^t$, replacing $P^{(t)}$ with $Q^{(t)}$, and $\gamma(t)$ with $\gamma_{\text{ave}}(t)$).

\begin{theorem}[Min-Sum voltage, running time, $L$-norm]\label{thm:runningtime}
Setting of Lemma \ref{lem:regulargraph}.\\ Let $\alpha>0,\beta>0,\delta>1$ independent of $n$, $m$. Assume that $\| \nu^\star \|_{L,P^{(t)}} \le \delta \| \nu^\star \|_L$ for $t\ge 0$.
\begin{enumerate}
\item If $\gamma(t) \le \alpha \exp(-\beta t)$ for $t\ge 0$, then the Min-Sum algorithm yields an estimate $\hat\nu$ such that $\| \nu^\star - \hat\nu \|_L \le \varepsilon \| \nu^\star \|_L$ with running time $O(m \log 1/\varepsilon)$.
\item If $\gamma(t) \le \alpha / t^{\beta}$ for $t\ge 0$, then the Min-Sum algorithm yields an estimate $\hat\nu$ such that $\| \nu^\star - \hat\nu \|_L \le \varepsilon \| \nu^\star \|_L$ with running time $O(m /\varepsilon^{1/\beta})$.
\end{enumerate}
\end{theorem}
\begin{proof}
The proof follows immediately from Corollary \ref{cor:convergence 2}, recalling that each iteration of the Min-Sum algorithm requires a complexity that is linear in the number of edges, i.e., $O(m)$. If \emph{1.}\ holds, then $\| \nu^\star - \hat\nu^t \|_L \le \varepsilon \| \nu^\star \|_L$ is guaranteed by choosing $t$ such that $2 \alpha \delta \exp(-\beta (t+1)) \le \varepsilon$; we can choose the smallest value $t=\lceil \log(2\alpha\delta/\varepsilon)/\beta - 1 \rceil \sim O(\log 1/\varepsilon)$. If \emph{2.}\ holds, then $\| \nu^\star - \hat\nu^t \|_L \le \varepsilon \| \nu^\star \|_L$ is guaranteed by choosing $t$ such that $2 \alpha \delta / (t+1)^\beta \le \varepsilon$; we can choose the smallest value $t=\lceil (2\alpha\delta/\varepsilon)^{1/\beta} - 1 \rceil \sim O(1/\varepsilon^{1/\beta})$.
\end{proof}

Theorem \ref{thm:runningtime} shows that for certain classes of $d$-regular graphs, as characterized by the decaying properties of $\gamma(t)$, under the assumption that the TV-normalized $L$-norm of $\nu^\star$ is bounded by the $L$-norm of $\nu^\star$, the Min-Sum algorithm yields linear time solvers for the voltage problem, with respect to the $L$-norm.

\subsection{Convergence Results for the Flow Problem}\label{sec:flow result}

We now analyze the convergence behavior of the Min-Sum algorithm when applied to the flow problem: Algorithm \ref{alg:Min-Sum, quadratic messages, no leaves}. 
We present the results for the $\ell_\infty$ norm, defined as $\| x \|_\infty := \max_{e\in \vec E} x_e$ for any $x\in\R^{\vec E}$. The results for the $\ell_2$ norm, defined as $\| x \|_2 := \sqrt{x^Tx}$, are analogous to the results in the $L$-norm for the voltage case, as we describe in Remark \ref{rem:analysis ell2}. 

For the case $d=2$, Lemma \ref{lem:ring} immediately yields the following result.
\begin{corollary}[Regular graphs, $d=2$]\label{cor:cycle conv}
Consider the setting of Lemma \ref{lem:ring}. Let $\omega_M:=\max_{\{v,w\}\in E} W_{vw}$. Then,
$
	\| x^\star - \hat x^t \|_\infty
	\le \frac{2\omega_M}{2t+1} \| \nu^\star \|_\infty.
$
If $\omega_M$ is independent of $n$ and $m$, then the Min-Sum algorithm yields $\hat x$ such that $\| x^\star - \hat x \|_\infty \le \varepsilon \| \nu^\star \|_\infty$ with running time $O(m/\varepsilon)$.
\end{corollary}

\begin{proof}
From Lemma \ref{lem:ring}, by the triangle inequality, we have
$
	|x^\star_{e} - \hat x^t_{e} |
	\le \frac{\omega_M}{2t+1}
	(
	|\nu^\star_{\rho(v-t)}| + |\nu^\star_{\rho(v+t+1)}|
	),
$
and the first bound follows by the triangle inequality for norms. The running time is immediate, as each iteration of the algorithm requires a complexity $O(m)$.
\end{proof}

Corollary \ref{cor:cycle conv} shows that the flow Min-Sum algorithm on cycles converges in linear time to the problem solution, as opposed to the divergent behavior observed in the voltage case.

Let us now consider the case $d\ge 3$. As in the voltage problem, also in this case the convergence of the Min-Sum algorithm can be controlled by the TV distance between the probability distributions of properly-defined non-backtracking random walks that originate from neighbor nodes. As before, we also consider an averaged version of the Min-Sum algorithm where the output of two consecutive iterations is properly averaged:
$
	\hat x_{\text{ave}}^t := 
	\frac{c_{d,t-1}}{c_{d,t-1} + c_{d,t}} \hat x^{t-1} 
	+ 
	\frac{c_{d,t}}{c_{d,t-1} + c_{d,t}} \hat x^{t}.
$
Analogously to the voltage case, let $Q^{(t,w)}:=(P^{(t-1,w)} + P^{(t,w)})/2$ and
\begin{align*}
	\widetilde\gamma(t) := \max_{\{v,w\}\in E} \| P^{(t,w)}_{v} - P^{(t,v)}_{w} \|_{TV},
	\qquad\widetilde\gamma_{\text{ave}}(t) := \max_{\{v,w\}\in E} \| Q^{(t,w)}_{v} - Q^{(t,v)}_{w} \|_{TV}.
\end{align*}

\begin{corollary}[Regular graphs, $d\ge 3$, equal weights]\label{cor:convergence}
Setting of Lemma \ref{lem:regulargraph}. For $t\ge4$,
\begin{align*}
	\|x^\star - \hat x^t \|_\infty
	\le 2 \, \omega\, \frac{d-1}{d}
	\,\widetilde\gamma(t)\, \|\nu^\star\|_\infty,
	\qquad
	\|x^\star - \hat x_{\emph{ave}}^t \|_\infty
	\le 2 \, \omega\, \frac{d-1}{d}
	\,\widetilde\gamma_{\emph{ave}}(t)\, \|\nu^\star\|_\infty.
\end{align*}
\end{corollary}

\begin{proof}
For any $t\ge 1$, define the matrix $\widetilde\Delta^{(t)}\in\mathbb{R}^{\vec E\times V}$ as 
$\widetilde\Delta^{(t)}_{ez} := P^{(t,w)}_{vz} - P^{(t,v)}_{wz}, e=(v,w)\in \vec E$,
$z\in V.$
From Lemma \ref{lem:regulargraph} we have
$
	x^\star - \hat x^t =
	\frac{\omega}{c_{d,t}} \frac{d-1}{d}
	\widetilde\Delta^{(t)}\nu^\star,
$
and the first bound follows from the sub-multiplicative property of the induced $\ell_\infty$ matrix norm, noticing that $\widetilde\gamma(t) = \| \widetilde\Delta^{(t)} \|_\infty/2$, and using that $c_{d,t} \ge 1$ from Proposition \ref{prop:constant d t}. The second bound follows analogously, as for any $t\ge 4$ we have
$
	(c_{d,t-1} + c_{d,t}) (x^\star - \hat x_{\text{ave}}^t)
	=
	c_{d,t-1}(x^\star - \hat x^{t-1})
	+ c_{d,t}(x^\star - \hat x^t)
	=
	\omega\,
	\frac{d-1}{d}
	(\widetilde\Delta^{(t-1)} + \widetilde\Delta^{(t)})\nu^\star,
$
and
$\widetilde\gamma_{\text{ave}}(t) = \| ( \widetilde\Delta^{(t-1)} + \widetilde\Delta^{(t)})/2 \|_\infty/2$.
\end{proof}

Corollary \ref{cor:convergence} shows that the convergence in the $\ell_\infty$ norm of the Min-Sum estimate $\hat x^t$ and its averaged version $\hat x_{\text{ave}}^t$ is controlled by the behavior with time of the quantities $\widetilde\gamma(t)$ and $\widetilde\gamma_{\text{ave}}(t)$, respectively, which represent uniform bounds on the TV distance between probability distributions of non-backtracking random walks that make $t$ steps originating from neighbor nodes, conditioned on the event that the two walks avoid each others at the first step.
Proposition \ref{proof:Non-backtracking random walks} in Appendix \ref{app:technical} yields a recursive formula to compute the matrices $P^{(t,w)}$'s in terms of the adjacency matrix of the undirected unweighted graph $(V,E)$. This formula can then be used to compute the quantities $\widetilde\gamma(t)$ and $\widetilde\gamma_{\text{ave}}(t)$. Numerical results for a few classes of $d$-regular graphs have been presented in the previous section. See Figure \ref{fig:complete decay} (right)\footnote{It is easy to see that $\widetilde\gamma(1)=0$ for complete graphs, which explains the points at $t=1$ in Figure \ref{fig:complete decay} (right).}, Figure \ref{fig:ring decay} (right), and Figure \ref{fig:torus decay} (right). The following theorem summarizes the algorithmic complexity of the Min-Sum algorithm $x^t$ in the $\ell_\infty$ norm, under the assumption that $\widetilde\gamma(t)$ decays exponentially or polynomially, as attested numerically in the examples above (an analogous statement holds for the output $x_{\text{ave}}^t$, replacing $\widetilde\gamma(t)$ with $\widetilde\gamma_{\text{ave}}(t)$).

\begin{theorem}[Min-Sum flow, running time, $\ell_\infty$ norm]\label{thm:runningtime flow}
Setting of Lemma \ref{lem:regulargraph}. Let $\alpha>0,\beta>0$, and $\omega$ be independent of $n$ and $m$.
\begin{enumerate}
\item If $\widetilde\gamma(t) \le \alpha \exp(-\beta t)$ for $t\ge 0$, then the Min-Sum algorithm yields an estimate $\hat x$ such that $\| x^\star - \hat x \|_\infty \le \varepsilon \| \nu^\star \|_\infty$ with running time $O(m \log 1/\varepsilon)$.
\item If $\widetilde\gamma(t) \le \alpha / t^{\beta}$ for $t\ge 0$, then the Min-Sum algorithm yields an estimate $\hat x$ such that $\| x^\star - \hat x \|_\infty \le \varepsilon \| \nu^\star \|_\infty$ with running time $O(m /\varepsilon^{1/\beta})$.
\end{enumerate}
\end{theorem}
\begin{proof}
It follows from Corollary \ref{cor:convergence}, analogously to the proof of Theorem \ref{thm:runningtime}.
\end{proof}

Theorem \ref{thm:runningtime flow} shows that for certain classes of $d$-regular graphs, as characterized by the decaying properties of $\widetilde\gamma(t)$, the Min-Sum algorithm yields linear time solvers for the flow problem, with respect to the $\ell_\infty$ norm. The following remark shows that an analogous result to Theorem \ref{thm:runningtime flow} can be derived with respect to the $\ell_2$ norm, with an additional assumption in the spirit of the one used in Theorem \ref{thm:runningtime} for the voltage case.

\begin{remark}[Analysis in $\ell_2$ norm]\label{rem:analysis ell2}
For $d$-regular graphs with $d\ge 3$, the analysis in the $\ell_2$ norm for the flow problem follows the same steps as the analysis in the $L$-norm for the voltage problem. In fact, if one defines the matrices $\Delta^{(t)}$ and $\widetilde\Delta^{(t)}\in\mathbb{R}^{\vec E\times V}$ respectively as
$
	\Delta^{(t)}_{ez} := P^{(t)}_{vz} - P^{(t)}_{wz}
$
and
$\widetilde\Delta^{(t)}_{ez} := P^{(t,w)}_{vz} - P^{(t,v)}_{wz}$,
for any $e=(v,w)\in \vec E, z\in V$, then
$
	\| x^\star - \hat x^t \|_2 =
	\frac{\omega}{c_{d,t}} \frac{d-1}{d}
	\| \widetilde\Delta^{(t)}\nu^\star \|_2,
$
while
$
	\| \nu^\star - \hat \nu^t \|_L =
	\frac{1}{b_{d,t}} 
	\| \Delta^{(t)}\nu^\star \|_2.
$
In analogy with Theorem \ref{thm:runningtime}, the results of Theorem \ref{thm:runningtime flow} hold when replacing $\| x^\star - \hat x \|_\infty \le \varepsilon \| \nu^\star \|_\infty$ with $\| x^\star - \hat x \|_2 \le \varepsilon \| \nu^\star \|_L$, upon assuming that there exists $\delta>1$ independent of $n$ and $m$ such that 
$
\sum_{\{v,w\}\in E} W_{vw} 
	\big( \frac{(P^{(t,w)}\nu)_v - (P^{(t,v)}\nu)_w}{\| P^{(t,w)}_{v} - P^{(t,v)}_{w} \|_{TV}} \big)^2 \le \delta^2 \| \nu^\star \|_L^2,
$
for $t\ge 0$,
which corresponds to the assumption $\| \nu^\star \|_{L,P^{(t)}} \le \delta \| \nu^\star \|_L$ in Theorem \ref{thm:runningtime}.
\end{remark}

\section{Results for General Weighted Graphs}\label{sec:proofs}

This section develops the characterization of the error committed by the voltage and flow algorithms on general weighted graphs in terms of hitting times of ordinary diffusion random walks defined on the computation trees that are obtained by unraveling the operations of the algorithms with time (Theorem \ref{thm:errorcharacterization} and Lemma \ref{lem:computationdevice} for the flow case, and Theorem \ref{thm:errorcharacterization voltage} for the voltage case). As applications, we specialize the general results here developed to prove the error characterization lemmas given in Section \ref{sec:results}, i.e., Lemma \ref{lem:ring} and Lemma \ref{lem:regulargraph}.

While the main architecture of the results for the voltage problem (Algorithm \ref{alg:Min-Sum voltages, quadratic messages}) and the flow problem (Algorithm \ref{alg:Min-Sum, quadratic messages, no leaves}) is similar, there are some key differences that need to be taken into account. First, as the voltage algorithm updates functions (i.e., messages) on the vertices while the flow algorithm updates functions on the edges, the two algorithms give rise to different topologies for the computation tree. Second, as the voltage problem is an unconstrained optimization problem while the flow problem is an optimization problem with constraints, a different mechanism to find the fix point(s) of the algorithm is needed. As in the literature, to the best of our knowledge, previous analyses of the Min-Sum algorithm for quadratic and, more generally, convex problems only focused on the unconstrained case, we first present the full details of the results for the flow problem. Then, we give an overview of the results for the voltage problem, outlining in details only the parts where the analysis differs significantly from the one done for the flow problem.

\subsection{Flow Problem, Algorithm \ref{alg:Min-Sum voltages, quadratic messages}}\label{proofs:flow}

Our framework builds on the general scheme presented in \cite{MVR10} for unconstrained optimization problems, as we investigate the evolution of the Min-Sum algorithm under a set of linear perturbations. Henceforth, we consider Algorithm \ref{alg:Min-Sum, quadratic messages, no leaves} when the initial messages are parametrized by $\{R^{0}_{e\rightarrow v}=R_{ee}\}, \{r^{0}_{e\rightarrow v}=p_{e\rightarrow v}\}$, for a certain set of real numbers $p=\{p_{e\rightarrow v}\}$.
As far as the analysis is concerned, it is convenient (and more amenable to generalizations) to consider the form of Algorithm \ref{alg:Min-Sum for min-cost network flow}. For a given choice of perturbation parameters $p=\{p_{e\rightarrow v}\}$, define the initial messages as
$
	\mu^{0}_{e\rightarrow v}(\,\cdot\,,p)
	: z\in\R \longrightarrow
	\mu^{0}_{e\rightarrow v}(z,p)
	= \frac{1}{2} 
	R_{ee}z^2 + p_{e\rightarrow v}z,
$
and denote by $\{\mu^{s}_{e\rightarrow v}(\,\cdot\,,p)\}$, $s\ge 1$, the corresponding sequence of messages generated by the Min-Sum algorithm. Denote the estimates at time $t$ as $\hat x^t(p)_e := \arg\min_{z\in\R} \mu^{t}_{e}(z,p)$, where $\mu^{t}_{e}(\,\cdot\,,p)$ is the corresponding belief function. We use the notation $\mathcal{N}(v) := \{w\in V: \{v,w\}\in E\}$ to denote the neighborhood of vertex $v$ in $G$.

\subsubsection{Computation Tree (Flow Problem)}\label{sec:comp tree}
To investigate the convergence behavior of the Min-Sum algorithm, we recall the concept of the computation tree, which is the graph obtained by unraveling the operations of the algorithm with time \citep{gallager1963,WF01,TJ02}.
Given an edge $\tilde e=(\tilde v,\tilde w)\in \vec E$, the computation tree rooted at $\tilde e$ of depth $t$ supports the optimization problem that is obtained by unfolding the computations involved in the Min-Sum estimate $\hat x^t(p)_{\tilde e}$.
Formally, the computation tree is a directed tree $\vec{\TT}=(\VV,\vec\EE)$ (throughout, we use the double-struck notation to refer to quantities related to the computation tree) where each vertex in $\VV$ is mapped to a vertex in $V$ through a map $\sigma:\VV \rightarrow V$ that preserves the edge structure, namely, if $\ee=(\vv,\ww)\in \vec \EE$ then $(\sigma(\vv),\sigma(\ww))\in \vec E$. Henceforth, we use the notation $\sigma(\ee):=(\sigma(\vv),\sigma(\ww))$. The tree $\vec{\TT}$ is defined iteratively. Initially, at level $0$, the tree corresponds to a single root edge $\tilde \ee=(\tilde \vv,\tilde \ww)\in\vec\EE$ corresponding to $\tilde e$, i.e., $\sigma(\tilde \ee)=\tilde e$. At this stage, $\tilde\vv$ and $\tilde\ww$ are the leaves of the tree. For the remaining $t$ levels, the following procedure is repeated. The leaves in the tree are examined. Given a leaf $\vv$ with $\sigma(\vv)=v$ that is connected to a vertex $\ww$ with $\sigma(\ww)=w$, for any $z\in\mathcal{N}(v)\setminus w$, a vertex $\zz$ with $\sigma(\zz)=z$ and a directed edge $(\vv,\zz)$ (resp.\ $(\zz,\vv)$) are added to the next level of the tree if $(v,z)\in\vec E$ (resp.\ $(z,v)\in\vec E$).
Figure \ref{fig:computationtree} gives an example.
We denote the set of vertices and edges in the $k$-th level of the tree respectively by $\VV^{k}\subset\VV$ and $\EE^{k}\subset\EE$.
In what follows we also extend the usual neighborhood notation to vertices and edges in the graph $\TT$, namely, $\partial\vv$ is the set of edges in $\TT$ that are connected to node $\vv$, and $\partial\ee$ is the set of vertices in $\TT$ that are connected to edge $\ee$.

\begin{figure}[h!]
\begin{subfigure}{.45\textwidth}
\centering
\begin{tikzpicture}
[scale=0.75,every node/.style={draw,circle,scale=0.75,solid,minimum size=0.72cm,inner sep=0pt},
->,>=latex,shorten >=1pt,shorten <=1pt]

\def \x {2}
\def \y {-2}

\node (1) at (0,0) {1};
\node (2) at (\x,0) {2};
\node (3) at (\x,\y) {3};
\node (4) at (0,\y) {4};

\draw (1) -- (2);
\draw (2) -- (3);
\draw (4) -- (3);
\draw (4) -- (1);
\draw (3) -- (1);

\end{tikzpicture}
\end{subfigure}
\begin{subfigure}{.45\textwidth}
\centering
\begin{tikzpicture}
[scale=0.75,every node/.style={draw,circle,scale=0.75,solid,minimum size=0.72cm,inner sep=0pt},
,>=latex,shorten >=1.3pt,shorten <=1.3pt,
level distance=1.2cm, 
level 1/.style={sibling distance=2cm},
level 2/.style={sibling distance=1cm}]

\node (Root1) {$1$}
child {
	node {$3$} edge from parent [<-]
	child { 
		node {$2$}  edge from parent [<-]
		child { node{$1$} edge from parent [<-] }
	}
	child {
		node {$4$}  edge from parent [<-]
		child { node{$1$} edge from parent [->] }
	}
	}
child {
	node {$4$} edge from parent [<-]
	child {
		node {$3$} edge from parent [->]
			child { node{$1$} edge from parent [->]}
			child { node{$2$} edge from parent [<-]}
		}
	};
\node[right=2.7cm of Root1] (Root2) {$2$}
	child {
		node {$3$} edge from parent [->]
			child { node {$1$} edge from parent [->]
				child { node{$2$} edge from parent [->]}
				child { node{$4$} edge from parent [<-]}
			}
			child { node {$4$}[right] edge from parent [<-]
				child { node{$1$} edge from parent [->]}
			}
	};
\draw[->] (Root1) -- (Root2);
\end{tikzpicture}
\end{subfigure}
\caption{Graph $\vec G=(V=\{1, 2, 3, 4\},\vec E)$ (left) and computation tree $\vec\TT=(\VV,\vec\EE)$ of depth $t=3$ with root edge corresponding to $(\text{1, 2})\in \vec E$ (right). Vertices in $\vec\TT$ are labeled by the corresponding vertices in $\vec G$.}
\label{fig:computationtree}
\end{figure}

The $t$-th (last) level of the tree $\vec{\TT}$ is of particular relevance, as it supports the initial conditions of the Min-Sum algorithm, as we are now about to describe. Let $\R\in\R^{\vec \EE\times \vec \EE}$ be the diagonal matrix defined by $\R_{\ee\ee}:=R_{\sigma(\ee)\sigma(\ee)}$. For a given choice of the perturbation parameters $p=\{p_{e\rightarrow v}\}$, define $\pp(p)\in\R^{\vec \EE}$ by $\pp(p)_\ee := p_{\sigma(\ee)\rightarrow \sigma(\vv)}$ if $\ee\in\vec\EE^t, \vv=\partial \ee \cap \VV^{t-1}$, and equals to $0$ otherwise.
Let $\bar \VV := \VV\setminus \VV^t$ (throughout, we use the bar notation to denote quantities related to a graph in which a node has been removed). Define the matrix $\bar\AA\in\R^{\bar\VV \times \vec \EE}$ as $\bar\AA_{\vv\ee} := A_{\sigma(\vv)\sigma(\ee)}$ if $\ee\in\partial \vv$ and $\bar\AA_{\vv\ee} :=0$ otherwise.
Define the vector $\bar\bb\in\R^{\bar\VV}$ as $\bar\bb_\vv := b_{\sigma(\vv)}$.
Consider the following problem supported on the tree $\vec{\TT}$, over $\xx\in\R^{\vec\EE}$:
\begin{align}
\begin{aligned}
	\text{minimize }\quad    & \frac{1}{2} \xx^T \R \xx + \pp(p)^T \xx\\
	\text{subject to }\quad & \bar\AA\xx =\bar\bb.
\end{aligned}
\label{network flow computation tree}
\end{align}

Let $\xx^\star(p)$ denote the unique optimal solution of this problem, as a function of the perturbation $p$. The relationship between the Min-Sum algorithm and the computation tree is made explicit by the following lemma, which can be easily established by inductively examining the operations of Algorithm \ref{alg:Min-Sum for min-cost network flow}.

\begin{lemma}\label{lem:computation tree}
Given the setting of this section, $\hat x^t(p)_{\tilde e}=\xx^\star(p)_{\tilde \ee}$.
\end{lemma}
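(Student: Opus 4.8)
The plan is to prove the stronger statement that each min-sum message is exactly the \emph{value function} of the optimization problem supported on the corresponding branch of the computation tree $\vec{\TT}$, and then to combine the two branches meeting at the root edge $\tilde\ee$. Since $\vec{\TT}$ is by construction acyclic, the min-sum recursion of Algorithm \ref{alg:Min-sum for min-cost network flow} is nothing but the dynamic-programming recursion for this family of subproblems; the content of the lemma is that this purely local recursion reproduces the global optimum of \eqref{network flow computation tree}.

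Concretely, fix an edge $\ee$ with $\sigma(\ee)=e=(v,w)$ and let $\vec{\TT}^{s}_{e\rightarrow v}$ be the depth-$s$ branch grown from $\ee$ by expanding outward from the endpoint $\ww$ (as in Section \ref{sec:comp tree}). I would first show, by induction on $s\ge 0$, that
$$
	\mu^{s}_{e\rightarrow v}(z,p)
	= g_e(z) + \min_{\xx}\ \Big\{ \textstyle\sum_{\ee'} g_{\sigma(\ee')}(\xx_{\ee'}) + \sum_{\ee'\ \text{leaf}} p_{\sigma(\ee')\rightarrow\sigma(\vv')}\,\xx_{\ee'} \Big\},
$$
where $\ee'$ ranges over the non-root edges of the branch, $\vv'$ denotes the outward endpoint of a leaf edge $\ee'$, the second sum therefore carries the initial-message perturbations, and the minimum is over flows $\xx$ on the branch obeying conservation at every internal vertex together with the boundary condition that the oriented flow on $\ee$ equals $z$. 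The base case $s=0$ is immediate: the branch is the single edge $\ee$, whose initial message is $\tfrac12 R_{ee}z^2 + p_{e\rightarrow v}z$, matching the right-hand side.

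For the inductive step I would invoke the update rule used in the proof of Lemma \ref{lem:min-sum updates},
$$
	\mu^{s}_{e\rightarrow v}(z,p)
	= g_e(z) + \min_{x:\ \tilde A x = b_w - A_{we}z}\ \sum_{f\in\partial w\setminus e}\mu^{s-1}_{f\rightarrow w}(x_f,p),
$$
and substitute the inductive hypothesis for each $\mu^{s-1}_{f\rightarrow w}$. The crucial structural fact, forced by acyclicity of $\vec{\TT}$, is that the sub-branches grown from the distinct children $\ff$ of $\ww$ are pairwise vertex-disjoint except for $\ww$ itself; hence their objectives and their internal conservation constraints decouple, and the joint minimization factorizes into the sum of the individual value functions supplied by the hypothesis. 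Fixing $z$ on $\ee$ converts Kirchhoff's law at $w$ into exactly the constraint $\tilde A x = b_w - A_{we}z$, so the incidence-sign $A_{we}$ bookkeeping is consistent, and reassembling the pieces recovers the value function of $\vec{\TT}^{s}_{e\rightarrow v}$, closing the induction.

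To finish, observe that at depth $t$ the leaf edges are precisely $\vec\EE^{t}$ and the linear term above is exactly $\pp(p)^T\xx$ of \eqref{network flow computation tree}. The root edge $\tilde\ee$ is shared by the two branches grown from $\tilde\vv$ and from $\tilde\ww$, which are vertex-disjoint apart from the endpoints of $\tilde\ee$ and, together with $\tilde\ee$, exhaust $\vec{\TT}$. Therefore the belief $\mu^{t}_{\tilde e}(z,p)=\mu^{t}_{\tilde e\rightarrow\tilde v}(z,p)+\mu^{t}_{\tilde e\rightarrow\tilde w}(z,p)-g_{\tilde e}(z)$ (the $-g_{\tilde e}$ removing the double count of the root edge) equals the value of \eqref{network flow computation tree} with $\xx_{\tilde\ee}$ constrained to $z$; minimizing over $z$ yields $\hat x^t(p)_{\tilde e}=\arg\min_z\mu^{t}_{\tilde e}(z,p)=\xx^\star(p)_{\tilde\ee}$. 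The main obstacle is the bookkeeping in the inductive step: one must verify that fixing the scalar argument $z$ corresponds, through the orientation map $\sigma$ and the incidence signs $A_{we}$, to fixing the tree flow $\xx_{\tilde\ee}$ consistently, and that acyclicity genuinely decouples the sibling subproblems so that the local update equals the branch optimum.
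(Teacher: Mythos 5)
Your proof is correct and is exactly the argument the paper intends: the paper dispatches this lemma with the remark that it ``can be easily established by inductively examining the operations of Algorithm \ref{alg:Min-sum for min-cost network flow}'', and your induction showing each message equals the value function of its branch --- with acyclicity decoupling the sibling subproblems and the two branches recombining at the root edge via the belief $\mu^{t}_{\tilde e}=\mu^{t}_{\tilde e\rightarrow\tilde v}+\mu^{t}_{\tilde e\rightarrow\tilde w}-g_{\tilde e}$ --- is precisely that verification carried out in full. One indexing caution: in your leaf term $p_{\sigma(\ee')\rightarrow\sigma(\vv')}$ the vertex $\vv'$ must be the endpoint of $\ee'$ lying in $\VV^{t-1}$ (the parent, matching the paper's definition $\pp(p)_\ee=p_{\sigma(\ee)\rightarrow\sigma(\vv)}$ with $\vv=\partial\ee\cap\VV^{t-1}$, and matching what the recursion actually produces through the initial messages $\mu^{0}_{f\rightarrow w}$ with $w$ the shared interior vertex), not the outward leaf vertex as your wording suggests.
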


We now introduce a few quantities related to the computation tree $\vec\TT$ and the problem \eqref{network flow computation tree} that will play a key role in our analysis.
The signed vertex-edge incidence matrix $\AA\in\R^{\VV\times\vec \EE}$ associated to the directed graph $\vec{\TT}$ is defined for each $\vv\in\VV$, $\ee\in\vec \EE$ as $\AA_{\vv\ee} := 1$ if edge $\ee$ leaves node $\vv$, $\AA_{\vv\ee} := -1$ if edge $\ee$ enters node $\vv$, and $\AA_{\vv\ee} := 0$ otherwise.
It is easy to verify that $\bar\AA$ is the submatrix of $\AA$ that corresponds to the rows associated to $\bar\VV$. 
Let $\WW\in\R^{\VV\times \VV}$ be a symmetric matrix that assigns a positive weight to every edge in $\vec \EE$, defined as $\WW_{\vv\ww}:=W_{\sigma(\vv)\sigma(\ww)}$ if either $(\vv,\ww)\in\vec\EE$ or $(\ww,\vv)\in\vec\EE$, and $\WW_{\vv\ww}:=0$ otherwise. Henceforth, we think of $\vec\TT=(\VV,\vec\EE,\WW)$ as the corresponding weighted graph, and $\TT=(\VV,\EE,\WW)$ as the undirected graph naturally associated to $\vec\TT$, where the edge set $\EE$ is obtained by removing the orientation from the edges in $\vec\EE$. As $\TT$ is connected by assumption, it is easy to show that the range of $\AA$ is spanned by the vectors orthogonal to the all-ones vector. It follows that the matrix $\bar \AA$ has full row rank.
Define the weighted degree of a vertex $\vv$ by $\dd_\vv:=\sum_{\ww\in \VV} \WW_{\vv\ww}$, and let $\DD\in\R^{\VV\times \VV}$ be the diagonal matrix defined by $\DD_{\vv\vv}:=\dd_\vv$. The Laplacian $\LL$ of the graph $\vec\TT$ is the matrix defined as $\LL:=\DD-\WW$. Let $\bar\LL\in\R^{\bar\VV\times\bar\VV}$ denote the submatrix of $\LL$ that corresponds to the rows and columns associated to $\bar\VV$.
Note that $\R_{\ee\ee}:=1/\WW_{\vv\ww}$.
It is easy to verify that the following equalities holds: $\LL=\AA\R^{-1}\AA^T$ and $\bar\LL=\bar\AA\R^{-1}\bar\AA^T$. We also define the transition matrix of the ordinary diffusion random walk on $\TT$ as the matrix $\PP:=\DD^{-1}\WW$.

\subsubsection{Fix Point (Flow Problem)}
Clearly, the choice $p=0$ for the linear perturbation parameters yields the original (unperturbed) algorithm. The next theorem below shows that for a particular choice of $p$ the Min-Sum algorithm yields the optimal solution at any time $t\ge 1$. The optimal choice of $p$ is a function of the optimal Lagrangian multipliers for problem \eqref{primal}. Recall from \eqref{lagrangian-primal} that the Lagrangian of this optimization problem is given by
$
	\mathcal{L}(x,\nu) := \frac{1}{2} x^T R x + \nu^T (b - A x),
$
where $\nu = (\nu_v)_{v\in V}$ is the vector formed by the Lagrangian multipliers.
Define the function $\Psi$ from $\mathbb{R}^{\vec E}\times\mathbb{R}^V$ to $\mathbb{R}^{\vec E}\times\mathbb{R}^V$ as
$
	\Psi
	( 
	x,
	\nu 
	)
	:=
	(
	\nabla_x \mathcal{L}(x,\nu),
	A x - b
	)
	=
	(
	R x - A^T\nu,
	A x - b 
	).
$
Recall that $x^\star$ denotes the unique optimal solution of problem \eqref{primal}. As the constraints are linear, by the Lagrange multiplier theorem there exists $\nu^\star\in\mathbb{R}^V$ so that
$
	\Psi
	( 
	x^\star,
	\nu^\star
	)
	=
	0,
$
i.e.,
\begin{align}
	\begin{cases}
	R_{ee}x^\star_e - (A^T \nu^\star)_e = 0
	\text{ for } e\in\vec E,\\
	\sum_{e\in\vec E} A_{ve} x^\star_e = b_v
	\text{ for } v\in V.
	\end{cases}
	\label{KKT}
\end{align}

Note that the first line in \eqref{KKT} represents Ohm's law, and the second line represents Kirchhoff's law. Given the pair $(x^\star,\nu^\star)$, we are now ready to define the set of parameters $p^\star=\{p^\star_{e\rightarrow v}\}$ so that the Min-Sum algorithm yields the optimal solution at any time step.

\begin{theorem}\label{thm: opt choice p}
For each $e\in \vec E$, $v\in\partial e$, let $w=\partial e\setminus v$ and define
$
	p^\star_{e\rightarrow v}
	:= 
	- A_{we} \nu^\star_w.
$
Then, for any $\tilde e\in\vec E$, $t\ge 1$, we have $\hat x^t(p^\star)_{\tilde e} = x^\star_{\tilde e}$.
\end{theorem}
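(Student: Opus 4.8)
The plan is to show that with the choice $p^\star_{e\rightarrow v} := -A_{we}\nu^\star_w$, the optimal solution $\xx^\star(p^\star)$ of the tree problem~\eqref{network flow computation tree} agrees with the lifted optimal flow on the root edge, so that Lemma~\ref{lem:computation tree} delivers the conclusion $\hat x^t(p^\star)_{\tilde e} = \xx^\star(p^\star)_{\tilde\ee} = x^\star_{\tilde e}$. The key insight — exactly the one highlighted in the introduction as the novelty for constrained problems — is that the KKT conditions~\eqref{KKT} for the original problem lift consistently to KKT conditions for the tree problem. So rather than solving the tree problem directly, I would \emph{guess} the solution by pulling back $x^\star$ and $\nu^\star$ along the projection $\sigma:\VV\rightarrow V$, and then verify that the guess satisfies the tree's KKT system.

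\textbf{Key steps.}
First I would define the candidate primal point $\xx^\star\in\R^{\vec\EE}$ by $\xx^\star_\ee := x^\star_{\sigma(\ee)}$ and a candidate multiplier vector $\bar\nu^\star\in\R^{\bar\VV}$ by $\bar\nu^\star_\vv := \nu^\star_{\sigma(\vv)}$ (one multiplier per interior vertex, since $\bar\AA$ has full row rank and the constraints are indexed by $\bar\VV$). The tree problem~\eqref{network flow computation tree} is a quadratic program with linear equality constraints, so by Proposition~\ref{prop:quadratic} its unique optimum is characterized by stationarity and feasibility: there must exist $\bar\nu$ with $\R\xx + \pp(p^\star) - \bar\AA^T\bar\nu = 0$ and $\bar\AA\xx = \bar\bb$. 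The second step is to check feasibility: for an interior vertex $\vv\in\bar\VV$, the constraint $(\bar\AA\xx^\star)_\vv = \sum_{\ee\in\partial\vv}\AA_{\vv\ee}x^\star_{\sigma(\ee)}$ should reduce — because $\sigma$ preserves edge orientations and each interior vertex's tree-neighborhood maps bijectively onto the graph-neighborhood of $\sigma(\vv)$ — to $\sum_{e\in\vec E}A_{\sigma(\vv)e}x^\star_e = b_{\sigma(\vv)} = \bar\bb_\vv$, which is Kirchhoff's law, the second line of~\eqref{KKT}. The third and most delicate step is stationarity, which must be verified \emph{edge by edge}, splitting into interior edges and the boundary (level-$t$) edges where $\pp(p^\star)$ is supported.

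\textbf{The boundary term and the main obstacle.}
For an interior edge $\ee=(\vv,\ww)$ with $\pp(p^\star)_\ee = 0$, the stationarity equation reads $R_{\sigma(\ee)\sigma(\ee)}x^\star_{\sigma(\ee)} = (\bar\AA^T\bar\nu^\star)_\ee = \bar\nu^\star_\vv - \bar\nu^\star_\ww = \nu^\star_{\sigma(\vv)} - \nu^\star_{\sigma(\ww)}$, which is precisely Ohm's law, the first line of~\eqref{KKT}, with $A^T\nu^\star$ expanded on edge $\sigma(\ee)=(\sigma(\vv),\sigma(\ww))$. The real work is the boundary edges $\ee\in\vec\EE^t$: here one endpoint, say $\ww$, lies in $\VV^t=\VV\setminus\bar\VV$ and therefore contributes \emph{no} multiplier to $\bar\AA^T\bar\nu^\star$, while the other endpoint $\vv\in\VV^{t-1}$ does. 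I expect the main obstacle to be bookkeeping the signs and orientations correctly so that the ``missing'' multiplier $\nu^\star_{\sigma(\ww)}$ on the leaf side is exactly reproduced by the perturbation: the stationarity equation on such an edge becomes $R_{\sigma(\ee)\sigma(\ee)}x^\star_{\sigma(\ee)} + p^\star_{\sigma(\ee)\rightarrow\sigma(\vv)} = \AA_{\vv\ee}\,\nu^\star_{\sigma(\vv)}$, and substituting $p^\star_{\sigma(\ee)\rightarrow\sigma(\vv)} = -A_{\sigma(\ww)\sigma(\ee)}\nu^\star_{\sigma(\ww)} = \AA_{\ww\ee}\nu^\star_{\sigma(\ww)}$ should turn this back into the full graph Ohm's law $R_{ee}x^\star_e = (A^T\nu^\star)_e$ for $e=\sigma(\ee)$. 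This is exactly why $p^\star$ is defined through $\nu^\star_w$ with the sign $-A_{we}$: the perturbation is precisely the boundary data needed to ``complete'' the truncated incidence relation so that global Ohm's law holds at every level. Once feasibility and stationarity are both verified, uniqueness from Proposition~\ref{prop:quadratic} forces $\xx^\star(p^\star)=\xx^\star$, and restricting to the root edge finishes the proof.
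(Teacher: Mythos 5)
Your proposal is correct and is essentially the paper's own proof: the paper likewise lifts the pair $(x^\star,\nu^\star)$ along $\sigma$ to the candidate $\xx^\star_\ee = x^\star_{\sigma(\ee)}$, $\bbnu^\star_\vv = \nu^\star_{\sigma(\vv)}$, verifies via the Lagrange multiplier (KKT) conditions for problem \eqref{network flow computation tree} that feasibility reduces to Kirchhoff's law and stationarity to Ohm's law from \eqref{KKT} --- with $\pp(p^\star)$ supplying exactly the missing leaf multipliers on $\vec\EE^t$ --- and concludes by uniqueness and Lemma \ref{lem:computation tree}. One intermediate sign slip worth fixing: since $\sigma$ preserves orientation, $\AA_{\ww\ee} = A_{\sigma(\ww)\sigma(\ee)}$, so $p^\star_{\sigma(\ee)\rightarrow\sigma(\vv)} = -\AA_{\ww\ee}\,\nu^\star_{\sigma(\ww)}$ rather than $+\AA_{\ww\ee}\,\nu^\star_{\sigma(\ww)}$; with that correction your boundary-edge equation does recover $R_{ee}x^\star_e = (A^T\nu^\star)_e$ as you claim.
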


\begin{proof}
Fix $\tilde e\in \vec E$, $t\ge 1$. Consider the setting in Section \ref{sec:comp tree}, and let $\vec{\TT}$ be the computation tree of depth $t$ rooted at $\tilde e$.
The Lagrangian of the problem \eqref{network flow computation tree} corresponding to the choice $p=p^\star$ is the function $\LL$ from $\mathbb{R}^{\vec \EE}\times\mathbb{R}^{\bar\VV}$ to $\mathbb{R}$ defined by
$
	\mathbb{L}(\xx,\bbnu) := \frac{1}{2} \xx^T \R \xx + \pp(p^\star)^T \xx
	 + \bbnu^T (\bar\bb - \bar\AA \xx),
$
where $\bbnu = (\bbnu_\vv)_{\vv\in \bar\VV}$.
Let $\mathbb{\Psi}$ from $\mathbb{R}^{\vec \EE}\times\mathbb{R}^{\bar\VV}$ to $\mathbb{R}^{\vec \EE}\times\mathbb{R}^{\bar\VV}$ be defined as
$
	\mathbb{\Psi}
	( 
	\xx,
	\bbnu
	)
	:=
	(
	\nabla_\xx \LL(\xx,\bbnu),
	\bar\AA \xx - \bar\bb
	)
	=
	(
	\R \xx + \pp(p^\star) - \bar\AA^T\bbnu,
	\bar\AA \xx - \bar\bb 
	).
$
The Lagrangian multiplier theorem says for the unique minimizer $\xx^\star\equiv \xx^\star(p^\star)\in\mathbb{R}^{\vec \EE}$ of \eqref{network flow computation tree} there exists $\bbnu^\star \in\mathbb{R}^{\bar\VV}$ so that
$
	\mathbb{\Psi}
	( 
	\xx^\star,
	\bbnu^\star
	)
	=
	0,
$
namely,
\begin{align*}
	\begin{cases}
	\R_{\ee\ee}\xx^\star_\ee
	- (\bar\AA^T \bbnu^\star)_\ee = 0,
	\ee\in\vec \EE \setminus \vec \EE^t,\\
	\R_{\ee\ee}\xx^\star_\ee + p^\star_{\sigma(\ee)\rightarrow \sigma(\vv)}
	\!-\! (\bar\AA^T \bbnu^\star)_\ee = 0, \ee\in\vec\EE^t, \vv=\partial \ee \cap \VV^{t-1},\\
	\sum_{\ee\in\vec\EE} \bar\AA_{\vv\ee} \xx^\star_\ee = \bar\bb_\vv, \vv\in\bar\VV.
	\end{cases}
\end{align*}
Using \eqref{KKT} it is easy to check that the choice $\xx^\star_\ee = x^\star_{\sigma(\ee)}$ and $\bbnu^\star_{\vv}=\nu^\star_{\sigma(\vv)}$ satisfies the above system of equations. The proof is concluded by Lemma \ref{lem:computation tree}.
\end{proof}

\subsubsection{Error Characterization (Flow Problem)}
The next lemma characterizes the sensitivity of the $\tilde \ee$-th component (root edge of $\vec{\TT}$) of the optimal solution of the optimization problem \eqref{network flow computation tree} with respect to perturbations of the parameters $p_{e\rightarrow v}$'s, which are supported on $\vec{\EE}^t$.

\begin{lemma}\label{lem:corr tree}
Consider the setting in Section \ref{sec:comp tree}. Fix $\tilde e=(\tilde v,\tilde w)\in \vec E$, $t\ge 1$, and let $\vec{\TT}$ be the computation tree of depth $t$ rooted at $\tilde e$, with root edge $\tilde \ee=(\tilde \vv,\tilde \ww)\in\vec\EE$. For any $p=\{p_{e\rightarrow v}\}$, $e\in \vec E$ and $v\in \partial e$, we have
\begin{align*}
	\frac{\partial}{\partial p_{e\rightarrow v}} \xx^\star(p)_{\tilde \ee}
	&= \frac{1}{R_{\tilde e\tilde e}} \sum_{\vv\in\VV^{t-1}} 
	(\bar\LL^{-1}_{\tilde \vv\vv} - \bar\LL^{-1}_{\tilde \ww\vv})
	(\yy_{e\rightarrow v})_\vv,
\end{align*}
where $\yy_{e\rightarrow v}\in\R^{\VV^{t-1}}$ is defined, for each $\vv\in\VV^{t-1}$, as
$
	(\yy_{e\rightarrow v})_\vv := 
	\frac{A_{ve}}{R_{ee}}\mathbf{1}_{\sigma(\vv)=v}\mathbf{1}_{\sigma(\partial \vv\cap \vec \EE^t)=e},
$
where $\mathbf{1}_S$ is the indicator function defined as $\mathbf{1}_S = 1$ if statement $S$ is true, $\mathbf{1}_S = 0$ otherwise.
\end{lemma}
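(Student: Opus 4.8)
The plan is to compute the sensitivity $\frac{\partial}{\partial p_{e\rightarrow v}} \xx^\star(p)_{\tilde \ee}$ by exploiting the explicit solution of the quadratic program \eqref{network flow computation tree}. Since the constraints $\bar\AA\xx=\bar\bb$ are linear and $\R$ is positive definite, by Proposition \ref{prop:quadratic} (applied with the incidence matrix $\bar\AA$, diagonal resistance $\R$, linear term $\pp(p)$, and right-hand side $\bar\bb$) the minimizer has a closed form that is \emph{affine} in the linear coefficient $\pp(p)$. Explicitly, writing $\YY := \R^{-1}\bar\AA^T\bar\LL^{-1}$ (so that $\bar\LL=\bar\AA\R^{-1}\bar\AA^T$), the optimizer reads $\xx^\star(p) = \bar\bb$-term $-(I-\YY\bar\AA)\R^{-1}\pp(p)$. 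The first ($\bar\bb$-dependent) piece does not depend on $p$, so differentiating with respect to a single coordinate $p_{e\rightarrow v}$ only requires differentiating the term $-(I-\YY\bar\AA)\R^{-1}\pp(p)$.

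First I would read off how $\pp(p)$ depends on the scalar $p_{e\rightarrow v}$. By the definition of $\pp(p)$ in Section \ref{sec:comp tree}, the parameter $p_{e\rightarrow v}$ loads onto exactly those leaf-edge coordinates $\ee\in\vec\EE^t$ with $\sigma(\ee)=e$ whose endpoint in $\VV^{t-1}$ maps to $v$; that is, $\frac{\partial}{\partial p_{e\rightarrow v}}\pp(p)_\ee = \mathbf{1}_{\ee\in\vec\EE^t}\,\mathbf{1}_{\sigma(\ee)=e}\,\mathbf{1}_{\sigma(\partial\ee\cap\VV^{t-1})=v}$. This is where the indicator structure in the claimed vector $\yy_{e\rightarrow v}$ enters. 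Differentiating the affine solution then gives $\frac{\partial}{\partial p_{e\rightarrow v}}\xx^\star(p) = -(I-\YY\bar\AA)\R^{-1}\,\partial_{p_{e\rightarrow v}}\pp(p)$, and I only need the $\tilde\ee$-th (root) component.

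Next I would simplify the root component. The key observation is that the root edge $\tilde\ee$ is \emph{not} a leaf edge, so $(\R^{-1}\partial_{p}\pp)_{\tilde\ee}=0$ and the contribution to the $\tilde\ee$-component comes entirely through $\YY\bar\AA$, i.e.\ through the term $(\YY\bar\AA\R^{-1}\partial_{p}\pp)_{\tilde\ee}$. Expanding $\YY=\R^{-1}\bar\AA^T\bar\LL^{-1}$ and using $\R^{-1}_{\tilde\ee\tilde\ee}=1/R_{\tilde e\tilde e}$, the row of $\bar\AA^T$ indexed by $\tilde\ee=(\tilde\vv,\tilde\ww)$ picks out $\bar\LL^{-1}_{\tilde\vv\,\cdot}-\bar\LL^{-1}_{\tilde\ww\,\cdot}$ (since the incidence column for $\tilde\ee$ has $+1$ at $\tilde\vv$ and $-1$ at $\tilde\ww$). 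Contracting against $\bar\AA\R^{-1}\partial_{p}\pp$ and collapsing the leaf sum against the indicators in $\partial_{p}\pp$ produces precisely $\frac{1}{R_{\tilde e\tilde e}}\sum_{\vv\in\VV^{t-1}}(\bar\LL^{-1}_{\tilde\vv\vv}-\bar\LL^{-1}_{\tilde\ww\vv})(\yy_{e\rightarrow v})_\vv$, where the leaf edge incident to $\vv$ contributes its sign $A_{ve}/R_{ee}$, matching the stated definition of $\yy_{e\rightarrow v}$.

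The main obstacle I anticipate is the bookkeeping in the last step: carefully tracking how a single leaf-edge coordinate maps, via $\bar\AA$ and $\bar\LL^{-1}$, onto the two endpoints $\tilde\vv,\tilde\ww$ of the root, and verifying that the leaf incidence signs and the $1/R_{ee}$ factors assemble exactly into $(\yy_{e\rightarrow v})_\vv$. Concretely I must confirm that summing over leaf edges mapping to $e$ with the $\VV^{t-1}$-endpoint equal to $\vv$ is equivalent to the single-vertex indicator $\mathbf{1}_{\sigma(\vv)=v}\mathbf{1}_{\sigma(\partial\vv\cap\vec\EE^t)=e}$ in the tree (i.e.\ that each such $\vv$ has a \emph{unique} incident leaf edge mapping to $e$), which follows from the tree structure of $\vec\TT$. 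Everything else is a direct substitution into the closed form from Proposition \ref{prop:quadratic}.
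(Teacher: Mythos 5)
Your proposal is correct and takes essentially the same route as the paper's proof: both invoke Proposition \ref{prop:quadratic} to write $\xx^\star(p)$ as an affine function of $\pp(p)$, observe that $\pp(p)$ is supported on the leaf edges so the identity term vanishes at the root edge $\tilde\ee$, and then contract $\R^{-1}\bar\AA^T\bar\LL^{-1}\bar\AA\R^{-1}\frac{\partial}{\partial p_{e\rightarrow v}}\pp(p)$ at $\tilde\ee$, with the incidence column of $\tilde\ee$ producing $\bar\LL^{-1}_{\tilde\vv\,\cdot}-\bar\LL^{-1}_{\tilde\ww\,\cdot}$ and the leaf indicators collapsing to $(\yy_{e\rightarrow v})_\vv$. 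The bookkeeping point you flag --- that each $\vv\in\VV^{t-1}$ has at most one incident leaf edge mapping to $e$, since $G$ is simple and the children of a tree vertex map to distinct neighbors --- is exactly the step the paper leaves implicit, so nothing is missing.
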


\begin{proof}
By Proposition \ref{prop:quadratic} in Appendix \ref{app:technical}, the solution of problem \eqref{network flow computation tree} reads
$
	\xx^\star(p) 
	= \R^{-1} \bar\AA^T \bar\LL^{-1} \bar\bb + (\R^{-1}\bar\AA^T\bar\LL^{-1} \bar\AA - I) \RR^{-1} \pp(p),
$
where $I\in\R^{\vec \EE \times \vec \EE}$ is the identity matrix. As $\pp(p)$ is supported on the edges of $\TT$ that are connected to its leaves and $\tilde \ee$ is the root edge, we have
$
	\frac{\partial}{\partial p_{e\rightarrow v}} \xx^\star(p)_{\tilde\ee}
	= \big(\R^{-1}\bar\AA^T \bar\LL^{-1} \bar\AA\R^{-1}\frac{\partial}{\partial p_{e\rightarrow v}} \pp(p)\big)_{\tilde\ee}
	= \frac{1}{R_{\tilde e \tilde e}} \sum_{\vv\in\VV^{t-1}} 
	(\bar\LL^{-1}_{\tilde\vv\vv} - \bar\LL^{-1}_{\tilde\ww\vv})
	(\yy_{e\rightarrow v})_\vv,
$
where 
$
	(\yy_{e\rightarrow v})_\vv = 
	(\bar\AA\R^{-1}\frac{\partial}{\partial p_{e\rightarrow v}} \pp(p) )_{\vv}
	=
	\frac{A_{ve}}{R_{ee}}\mathbf{1}_{\sigma(\vv)=v}\mathbf{1}_{\sigma(\partial \vv\cap \vec \EE^t)=e}
$
for each $\vv\in\VV^{t-1}$.
\end{proof}

As a consequence of Lemma \ref{lem:corr tree}, we immediately have the following characterization of the error committed by Algorithm \ref{alg:Min-Sum, quadratic messages, no leaves} with initial conditions $\{R^0_{e\rightarrow v} = R_{ee}\}$, $\{r^0_{e\rightarrow v} = 0\}$.

\begin{theorem}\label{thm:errorcharacterization}
Consider the setting in Section \ref{sec:comp tree}. Fix $\tilde e=(\tilde v,\tilde w)\in \vec E$, $t\ge 1$, and let $\vec{\TT}$ be the computation tree of depth $t$ rooted at $\tilde e$, with root edge $\tilde\ee=(\tilde\vv,\tilde\ww)\in\vec\EE$. We have
\begin{align*}
	x^\star_{\tilde e} - \hat x^t_{\tilde e}
	&= \WW_{\tilde\vv\tilde\ww}
	\sum_{\vv\in\VV^{t-1}} 
	(\bar\LL^{-1}_{\tilde\vv\vv} - \bar\LL^{-1}_{\tilde\ww\vv})
	\sum_{\ww\in\VV^t} 
	\WW_{\vv\ww} \nu^\star_{\sigma(\ww)}.
\end{align*}
\end{theorem}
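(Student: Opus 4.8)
The plan is to obtain the error as an \emph{exact} first-order perturbation of the computation-tree solution $\xx^\star(p)_{\tilde\ee}$ around the optimal parameter choice $p^\star$ of Theorem \ref{thm: opt choice p}, and then to evaluate the resulting sum with Lemma \ref{lem:corr tree}. The first thing I would record is that $\xx^\star(p)_{\tilde\ee}$ is \emph{affine} in $p$: the closed form derived in the proof of Lemma \ref{lem:corr tree}, namely $\xx^\star(p) = \R^{-1}\bar\AA^T\bar\LL^{-1}\bar\bb + (\R^{-1}\bar\AA^T\bar\LL^{-1}\bar\AA - I)\R^{-1}\pp(p)$, is affine in $\pp(p)$, and $\pp(p)$ is linear in $p$; hence each partial derivative $\partial\xx^\star(p)_{\tilde\ee}/\partial p_{e\to v}$ is constant in $p$, so the first-order Taylor expansion of $p\mapsto\xx^\star(p)_{\tilde\ee}$ about $p=0$ is exact.

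Next I would identify the two endpoints of the expansion. Since $p=0$ reproduces the unperturbed initialization $\{r^0_{e\to v}=0\}$, Lemma \ref{lem:computation tree} gives $\hat x^t_{\tilde e} = \xx^\star(0)_{\tilde\ee}$; and Theorem \ref{thm: opt choice p} gives $x^\star_{\tilde e} = \xx^\star(p^\star)_{\tilde\ee}$. Combining these with exactness of the Taylor expansion yields
\[
	x^\star_{\tilde e} - \hat x^t_{\tilde e}
	= \xx^\star(p^\star)_{\tilde\ee} - \xx^\star(0)_{\tilde\ee}
	= \sum_{e\in\vec E}\sum_{v\in\partial e}
	\frac{\partial}{\partial p_{e\to v}}\xx^\star(p)_{\tilde\ee}\; p^\star_{e\to v},
\]
into which I would substitute the derivative formula of Lemma \ref{lem:corr tree} together with $p^\star_{e\to v} = -A_{we}\nu^\star_w$ (here $w=\partial e\setminus v$).

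The remaining task --- and the step I expect to be the main obstacle --- is the combinatorial and sign bookkeeping that collapses the double sum over $(e,v)$ into a sum over computation-tree vertices. Interchanging the order of summation to fix $\vv\in\VV^{t-1}$ first, the indicators $\mathbf{1}_{\sigma(\vv)=v}\mathbf{1}_{\sigma(\partial\vv\cap\vec\EE^t)=e}$ appearing in $\yy_{e\to v}$ select, for each $\vv$, exactly the child edges $\ee\in\partial\vv\cap\vec\EE^t$, pairing $(e,v)=(\sigma(\ee),\sigma(\vv))$ with the child endpoint $\ww\in\VV^t$ for which $\sigma(\ww)=w$. For each such pairing the incidence signs satisfy $A_{ve}A_{we}=-1$ (an oriented edge leaves one endpoint and enters the other), which cancels the sign in $p^\star_{e\to v}$; using $1/R_{ee}=1/R_{\sigma(\ee)\sigma(\ee)}=\WW_{\vv\ww}$ and $1/R_{\tilde e\tilde e}=\WW_{\tilde\vv\tilde\ww}$, and extending the inner sum to all $\ww\in\VV^t$ since $\WW_{\vv\ww}=0$ off the tree edges, I would arrive at
\[
	x^\star_{\tilde e} - \hat x^t_{\tilde e}
	= \WW_{\tilde\vv\tilde\ww}
	\sum_{\vv\in\VV^{t-1}}
	(\bar\LL^{-1}_{\tilde\vv\vv} - \bar\LL^{-1}_{\tilde\ww\vv})
	\sum_{\ww\in\VV^t}
	\WW_{\vv\ww}\,\nu^\star_{\sigma(\ww)},
\]
which is the claimed identity. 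Everything outside this last collapse is direct substitution; the care required is entirely in tracking which $(e,v)$ survive the indicators and in getting the two cancelling minus signs right.
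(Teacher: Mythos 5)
Your proposal is correct and follows essentially the same route as the paper: the paper writes $x^\star_{\tilde e}-\hat x^t_{\tilde e}=\hat x^t(p^\star)_{\tilde e}-\hat x^t(0)_{\tilde e}$ and applies the fundamental theorem of calculus along $\theta\mapsto\xx^\star(\theta p^\star)_{\tilde\ee}$, which collapses to the same contraction of the constant derivative against $p^\star$ that you obtain from exactness of the affine Taylor expansion. Your sign and bookkeeping steps (the cancellation $-A_{ve}A_{we}=1$, the identification $1/R_{ee}=\WW_{\vv\ww}$ and $1/R_{\tilde e\tilde e}=\WW_{\tilde\vv\tilde\ww}$, and extending the inner sum over all $\ww\in\VV^t$) match the paper's final computation exactly.
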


\begin{proof}
From Theorem \ref{thm: opt choice p} and Lemma \ref{lem:corr tree}, by the fundamental theorem of calculus and the chain rule of differentiation, we get
$
	x^\star_{\tilde e} - \hat x^t_{\tilde e}
	= \hat x^t(p^\star)_{\tilde e} - \hat x^t(0)_{\tilde e}
	= \int_0^1 d\theta\, \frac{d}{d \theta} \xx^\star(\theta p^\star)_{\tilde\ee}
	= \int_0^1 d\theta \sum_{e\in \vec E, v\in \partial e}
	\frac{\partial}{\partial p_{e\rightarrow v}} \xx^\star(\theta p^\star)_{\tilde\ee}
	\, p^\star_{e\rightarrow v}
	= \WW_{\tilde\vv\tilde\ww}
	\sum_{\vv\in\VV^{t-1}} 
	(\bar\LL^{-1}_{\tilde\vv\vv} - \bar\LL^{-1}_{\tilde\ww\vv})
	\sum_{e\in \vec E, v\in \partial e} (\yy_{e\rightarrow v})_\vv
	\, p^\star_{e\rightarrow v}.
$
By Theorem \ref{thm: opt choice p}
$
	p^\star_{e\rightarrow v} = - A_{we} \nu^\star_w,
$
$w=\partial e\setminus v$, and
$
	\sum_{\substack{e\in \vec E\\v\in \partial e}}
	(\yy_{e\rightarrow v})_\vv
	\, p^\star_{e\rightarrow v}
	= \sum_{\ww\in\VV^t} 
	\WW_{\vv\ww} \nu^\star_{\sigma(\ww)}.
$
\end{proof}

We now use the error characterization provided by Theorem \ref{thm:errorcharacterization} and the general connection between restricted Laplacians and random walks developed in Appendix \ref{sec:Laplacians and random walks} to prove the results in Section \ref{sec:results}. Henceforth, for each $\vv\in \VV$, let $\mathbf{P}_{\vv}$ be the law of a time homogeneous Markov chain $X_{0},X_{1},X_2,\ldots$ on $\VV$ with transition matrix $\PP:=\DD^{-1}\WW$ and initial condition $X_0=\vv$. Analogously, denote by $\mathbf{E}_{\vv}$ the expectation with respect to this law. The hitting time to a set $\mathbb{Z}\subseteq \VV$ is defined as
$
	T_\mathbb{Z} := \inf\{k\ge0 : X_k \in \mathbb{Z}\}.
$
We also extend the notation $\mathcal{N}(\vv) := \{\ww\in\VV: \{\vv,\ww\}\in \EE\}$ to denote the vertex-neighborhood of vertex $\vv$ in $\TT$.

The next lemma, which builds on the results developed in Appendix \ref{sec:Laplacians and random walks}, gives an expression for the restricted Laplacian in terms of hitting probabilities of random walks. This result represents the computational device that we will use repeatedly in what follows.

\begin{lemma}\label{lem:computationdevice}
Consider the setting in Section \ref{sec:comp tree}. Fix $\tilde e=(\tilde v,\tilde w)\in \vec E$, $t\ge 1$, and let $\vec{\TT}$ be the computation tree of depth $t$ rooted at $\tilde e$, with root edge $\tilde \ee=(\tilde \vv,\tilde \ww)\in\vec\EE$. For each $\vv\in\bar\VV$, $\ww\in\VV^{t-1}$, with $\zz=\mathcal{N}(\ww) \setminus \VV^t$, we have
\begin{align*}
	\bar \LL^{-1}_{\vv\ww} 
	=
	\frac{\mathbf{P}_{\vv}(T_{\ww}<T_{\VV^{t}})}
	{[1-\mathbf{P}_{\zz}(T_{\ww}<T_{\VV^{t}}) \,\mathbf{P}_\ww(X_1=\zz)]
	\,\dd_\ww}.
\end{align*}
\end{lemma}

\begin{proof}
Fix $\vv\in\bar\VV$, $\ww\in\VV^{t-1}$, and let $\zz=\mathcal{N}(\ww) \setminus \VV^t$. Define the random variable
$
	A := \sum_{k = 0}^{T_{\VV^{t}}} \mathbf{1}_{X_k=\ww}.
$
From Proposition \ref{prop:reducedlapandgreenfunction} we have
$
	\bar \LL^{-1}_{\vv\ww} 
	= \frac{1}{\dd_\ww} \mathbf{E}_{\ww}
	[A]
	\,\mathbf{P}_{\vv}(T_{\ww}<T_{\VV^{t}}).
$
By the Markov property, performing a first step analysis for the random walk, we get
$
	\mathbf{E}_{\ww} [A]
	= 1 + \sum_{\tilde\zz\in\mathcal{N}(\ww)} \!\! \mathbf{E}_{\tilde\zz} [A] \mathbf{P}_\ww(X_1=\tilde\zz)
	= 1 + \mathbf{E}_{\zz} [A] \mathbf{P}_\ww(X_1=\zz),
$
where we used that $\mathbf{E}_{\tilde\zz} [A]=0$ for any $\tilde\zz\in\VV^t$.
Conditioning on the event $\{T_{\ww}<T_{\VV^t}\}$ and its complementary, using that $\mathbf{E}_{\zz}[A| T_{\ww}> T_{\VV^t}] =0$, we also get
$
	\mathbf{E}_{\zz}[A]
	= \mathbf{E}_{\zz}[A| T_{\ww}<T_{\VV^t}] \, \mathbf{P}_{\zz}(T_{\ww}<T_{\VV^t}).
$
By the strong Markov property $\mathbf{E}_{\zz}[A| T_{\ww}<T_{\VV^t}]=\mathbf{E}_{\ww}[A]$, so that
$
	\mathbf{E}_{\ww} [A]
	= 1 + \mathbf{E}_{\ww} [A] \, \mathbf{P}_{\zz}(T_{\ww}<T_{\VV^t}) 
	\,\mathbf{P}_\ww(X_1=\zz)
$
and
$
	\mathbf{E}_{\ww} [A]
	= 1/(1-\mathbf{P}_{\zz}(T_{\ww}<T_{\VV^t}) 
	\,\mathbf{P}_\ww(X_1=\zz)).
$
\end{proof}

\subsubsection{Proofs of Results for $d$-Regular Graphs (Flow Problem)}

We can now present the proofs of the results concerning the flow problem contained in Lemma \ref{lem:ring} and Lemma \ref{lem:regulargraph} on the characterization of the error committed by the Min-Sum algorithm as a function of time and as a function of the voltage solution $\nu^\star=L^+ b$.\\

\begin{proof}[Proof of Lemma \ref{lem:ring}, flow problem]
Let $\vec G=(V=\{0,\ldots,n-1\},\vec E,W)$ be a weighted directed cycle.
Consider the setting in Section \ref{sec:comp tree}. Fix $v\in V$ and assume that $\tilde e=(\tilde v=\rho(v),\tilde w=\rho(v+1))\in \vec E$ (the case $e=(\tilde w=\rho(v+1),\tilde v=\rho(v))$ yields to the same result with a minus sign in front). Fix $t\ge 2$, and let $\vec{\TT}$ be the computation tree of depth $t$ rooted at $\tilde e$, with root edge $\tilde \ee=(\tilde \vv,\tilde \ww)\in\vec\EE$. Label the tree vertices as $\VV=\{0,1,\ldots,2t+1\}$, as in Figure \ref{fig:ring}. Clearly, $\tilde\vv=t,\tilde\ww=t+1$, $\VV^{t-1}=\{1,2t\}$, and $\VV^t=\{0,2t+1\}$.

\begin{figure}[h!]
\begin{subfigure}{.45\textwidth}
\centering
\begin{tikzpicture}[scale=0.75,every node/.style={draw,circle,scale=0.75,solid,minimum size=0.72cm,inner sep=0pt}]
\def \n {9}
\def \radius {1.9cm}
\def \margin {13}

\foreach \s in {0,...,8}
{
  \node at ({360/\n * (\n-\s + 5 - 1)-10}:\radius) {$\s$};
  \draw[-, >=latex] ({360/\n * (\s - 1)+\margin-10}:\radius) 
    arc ({360/\n * (\s - 1)+\margin-10}:{360/\n * (\s)-\margin-10}:\radius);
}
\end{tikzpicture}
\end{subfigure}
\begin{subfigure}{.45\textwidth}
\centering
\begin{tikzpicture}
[scale=0.75,every node/.style={draw,circle,scale=0.75,solid,minimum size=0.72cm,inner sep=0pt},
,>=latex,shorten >=1.3pt,shorten <=1.3pt,
level distance=1.2cm, 
level 1/.style={sibling distance=2cm},
level 2/.style={sibling distance=1cm}]

\node (Root1) [label=left:$\tilde\vv$] {$t$}
child {
	node {$t\!\!-\!\!1$} edge from parent [-]
	child {
		node {$1$} edge from parent [-,dashed]
			child { node {$0$} edge from parent [-,solid]}
		}
	};
\node[right=1.5cm of Root1] [label=right:$\ \tilde\ww$] (Root2) {$t\!\!+\!\!1$}
	child {
		node {$t\!\!+\!\!2$} edge from parent [-]
			child { node {$2t$} edge from parent [-,dashed]
				child { node {$2t\!\!+\!\!1$} edge from parent [-,solid]}
			}
	};
\draw[-] (Root1) -- (Root2);

\end{tikzpicture}
\end{subfigure}
\caption{Cycle $G=(V,E)$ (left) and computation tree $\TT=(\VV,\EE)$ of depth $t$ with root edge $\{\tilde\vv= t,\tilde\ww= t+1\}\in \EE$ (right).}
\label{fig:ring}
\end{figure}

\noindent We will apply Lemma \ref{lem:computationdevice} to compute the quantities $\bar \LL^{-1}_{\tilde\vv\vv} - \bar \LL^{-1}_{\tilde\ww\vv}$, for each $\vv\in\VV^{t-1}$. We write the computations for $\vv=1$, as the case $\vv=2t$ follows immediately by symmetry. For convenience of notation, we write $\WW_{s}$ to indicate $\WW_{s,s+1}$. For each $s\in\VV$, define $f_s:=\mathbf{P}_{s}(T_{1}<T_{\VV^t})$, $p_s:=\mathbf{P}_s(X_1=s+1) = \WW_{s}/\dd_s$, $\dd_s:= \WW_{s-1}+\WW_{s}$, and $q_s:=\mathbf{P}_s(X_1=s-1)=1-p_s$. By a first step analysis of the random walk and the Markov property, we have
\begin{align}
	f_s = f_{s-1} q_s + f_{s+1} p_s,
	\quad s\in\{2,\ldots,2t\},
	\label{system gambler cycle}
\end{align}
with boundary conditions $f_1=1,f_{2t+1}=0$. The above system of equations corresponds to the classical Gambler Ruin's problem, with final states $1$ and $2t+1$, see Figure \ref{fig:gambler}.
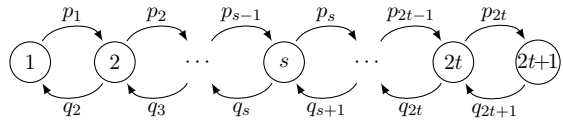
\begin{figure}[h!]
\centering
\begin{tikzpicture}
[scale=0.75,auto,
state/.style={draw,circle,scale=0.75,solid,minimum size=0.72cm,inner sep=0pt},
textt/.style={draw=none,scale=0.75,solid,inner sep=3pt},
>=latex,shorten >=1.3pt,shorten <=1.3pt,->,node distance=1.5cm]
\node[state]    		(A)		           	{$1$};
\node[state]    		(B)[right of=A]   	{$2$};
\node[state,draw=none]    					(C)[right of=B]   {$\cdots$};
\node[state]    		(D)[right of=C]   {$s$};
\node[state,draw=none]    					(E)[right of=D]   {$\cdots$};
\node[state]    		(F)[right of=E]   {$2t$};
\node[state]    	(G)[right of=F]   {$2t\!\!+\!\!1$};
\path
(A) edge[bend left=60,below]   	node[textt,above] {$p_1$}      	(B)
(B) edge[bend left=60,below]   	node[textt,below] {$q_2$}      	(A)
(B) edge[bend left=60,below]   	node[textt,above] {$p_2$}      	(C)
(C) edge[bend left=60,below]   	node[textt,below] {$q_3$}      	(B)
(C) edge[bend left=60,below]   	node[textt,above] {$p_{s-1}$}	(D)
(D) edge[bend left=60,below]   	node[textt,below] {$q_s$}      	(C)
(D) edge[bend left=60,below]   	node[textt,above] {$p_{s}$}	(E)
(E) edge[bend left=60,below]   	node[textt,below] {$q_{s+1}$}  	(D)
(E) edge[bend left=60,below]   	node[textt,above] {$p_{2t-1}$}	(F)
(F) edge[bend left=60,below]    	node[textt,below] {$q_{2t}$}  	(E)
(F) edge[bend left=60,below]    	node[textt,above] {$p_{2t}$}     	(G)
(G) edge[bend left=60,below]   	node[textt,below] {$q_{2t+1}$}	(F);
\end{tikzpicture}
\caption{Gambler's Ruin.}
\label{fig:gambler}
\end{figure}

\noindent To solve these equations, using that $1=p_s+q_s$, we first rewrite them as
$
	f_{s+1}-f_{s} = \frac{q_s}{p_s} (f_s-f_{s-1}),
$
for
$
	s\in\{2,\ldots,2t\}.
$
Using that $q_s/p_s=\WW_{s-1}/\WW_{s}$ we get
$
	f_{s+1}-f_{s} = \frac{\WW_1}{\WW_s} (f_2-f_{1}).
$
Summing these equations side by side, using that $f_1=1$, we get
$
	f_s = 1+(f_2-1) \WW_{1} (1/\WW_{1}+1/\WW_{2}+\cdots+1/\WW_{s-1}).
$
Using that $f_{2t+1}=0$ we can solve for $f_2$ and finally obtain
$
	f_s = \frac{1/\WW_{s}+1/\WW_{s+1}+\cdots+1/\WW_{2t}}
	{1/\WW_{1}+1/\WW_{2}+\cdots+1/\WW_{2t}}
$
for any $s\in\{1,\ldots,2t\}$.
From Lemma \ref{lem:computationdevice}, noticing that $\zz=2$ in this case, we have
$\bar \LL^{-1}_{\tilde\vv 1} - \bar \LL^{-1}_{\tilde\ww 1} = (f_t - f_{t+1})/((1-f_2p_1)\dd_1)$, which equals
$
	\bar \LL^{-1}_{\tilde\vv 1} - \bar \LL^{-1}_{\tilde\ww 1} 
	= \frac{1/\WW_{t}}{\WW_{0}(1/\WW_{0} +\cdots+1/\WW_{2t})}.
$
By symmetry, we have
$
	\bar \LL^{-1}_{\tilde\vv, 2t} - \bar \LL^{-1}_{\tilde\ww, 2t} 
	= \frac{-1/\WW_{t}}{\WW_{2t}(1/\WW_{0}+\cdots+1/\WW_{2t})}.
$
By Theorem \ref{thm:errorcharacterization}, as $\WW_{s}=W_{\sigma(s),\sigma(s+1)}$, we finally get
$
	x^\star_{\tilde e} - \hat x^t_{\tilde e}
	=
	\frac{\nu^\star_{\sigma(0)}	- \nu^\star_{\sigma(2t+1)}}
	{1/W_{\sigma(0)\sigma(1)} +\cdots+1/W_{\sigma(2t)\sigma(2t+1)}}.
$
The proof follows as the map $\sigma:\VV \rightarrow V$ reads $\sigma(s)=\rho(s-t+v)$.
\end{proof}

\begin{proof}[Proof of Lemma \ref{lem:regulargraph}, flow problem]
Let $\vec G=(V,\vec E,W)$ be a connected $d$-regular graph, with $d\ge 3$, where each edge has the same weight $\omega$.
Consider the setting in Section \ref{sec:comp tree}. Fix $\tilde e=(\tilde v,\tilde w)\in \vec E$, $t\ge 3$, and let $\vec{\TT}$ be the computation tree of depth $t$ rooted at $\tilde e$, with root edge $\tilde \ee=(\tilde \vv,\tilde \ww)\in\vec\EE$. Fix $\vv\in\VV^{t-1}$, and assume that $\vv$ is contained in the subtree with root vertex $\tilde \vv$. For what follows, it is convenient to label some of the vertices in $\VV$. Let $\vv=0, 1=\mathcal{N}(\vv)\setminus \VV^t, 2=\mathcal{N}(1)\setminus \VV^{t-1},\ldots,k+1=\mathcal{N}(k)\setminus \VV^{t-k}$, until we reach $\tilde\vv=t-1$. Label $\tilde\ww=t$. See Figure \ref{fig:regulargraph}. 
\begin{figure}[h!]

\centering

\begin{tikzpicture}
[scale=0.75,every node/.style={draw,circle,scale=0.75,solid,minimum size=0.72cm,inner sep=0pt},
,>=latex,shorten >=1.3pt,shorten <=1.3pt,
level distance=1.2cm, 
level 1/.style={sibling distance=2.9cm},
level 2/.style={sibling distance=1.45cm},
level 3/.style={sibling distance=0.72cm}]

\node[label=left:$\tilde\vv$] (Root1) {$t\!\!-\!\!1$}
child {
	node {$1$} edge from parent [-]
	child {
		node[label=left:$\vv$] {$0$} edge from parent [-]
			child { node{\phantom{0}} edge from parent [-]}
			child { node{\phantom{0}} edge from parent [-]}
		}
	child {
		node {\phantom{0}} edge from parent [-]
			child { node{\phantom{0}} edge from parent [-]}
			child { node{\phantom{0}} edge from parent [-]}
		}
	}
child {
	node {\phantom{0}} edge from parent [-]
	child {
		node {\phantom{0}} edge from parent [-]
			child { node{\phantom{0}} edge from parent [-]}
			child { node{\phantom{0}} edge from parent [-]}
		}
	child {
		node {\phantom{0}} edge from parent [-]
			child { node{\phantom{0}} edge from parent [-]}
			child { node{\phantom{0}} edge from parent [-]}
		}
	};
\node[right=3.87cm of Root1,label=right:$\tilde\ww$] (Root2) {$t$}
child {
	node {\phantom{0}} edge from parent [-]
	child {
		node {\phantom{0}} edge from parent [-]
			child { node{\phantom{0}} edge from parent [-]}
			child { node{\phantom{0}} edge from parent [-]}
		}
	child {
		node {\phantom{0}} edge from parent [-]
			child { node{\phantom{0}} edge from parent [-]}
			child { node{\phantom{0}} edge from parent [-]}
		}
	}
child {
	node {\phantom{0}} edge from parent [-]
	child {
		node {\phantom{0}} edge from parent [-]
			child { node{\phantom{0}} edge from parent [-]}
			child { node{\phantom{0}} edge from parent [-]}
		}
	child {
		node {\phantom{0}} edge from parent [-]
			child { node{\phantom{0}} edge from parent [-]}
			child { node{\phantom{0}} edge from parent [-]}
		}
	};
\draw[-] (Root1) -- (Root2);
\end{tikzpicture}

\caption{Computation tree $\TT=(\VV,\EE)$ of depth $t=3$ associated to any $3$-regular graph.}
\label{fig:regulargraph}
\end{figure}
We will apply Lemma \ref{lem:computationdevice} to compute the quantity $\bar \LL^{-1}_{\tilde\vv\vv} - \bar \LL^{-1}_{\tilde\ww\vv}$. To this end, we need to compute the hitting probabilities $f_\ww:=\mathbf{P}_{\ww}(T_{\vv}<T_{\VV^{t}})$ for $\ww\in\{1,t-1,t\}$. To do this, it is convenient to first reduce the network topology by invoking the equivalence between random walks and electrical circuits. See \cite{10.4169/j.ctt5hh804}. The electrical circuit associated to the random walk $X$ on $\TT$ is constructed by assigning to each edge $\ee=\{\vv,\ww\}\in\TT$ a conductor $\omega$, which corresponds to a resistor $1/\omega$. Then, the quantities $f_\ww$, $\ww\in\VV$, correspond to the voltages induced on $\VV\setminus\{\vv\cup \VV^t\}$ when a voltage of intensity $1$ is applied to $\vv$ ($f_\vv=1$), and a voltage of intensity $0$ is applied to the sites in $\VV^t$ ($f_\ww=0$, $\ww\in\VV^t$). By repeatedly applying the standard network reduction operations for conductors in parallel and in series, it is easy to verify that, as long as the quantities $f_1,f_2,\ldots,f_t$ are concerned, the original network is equivalent to the one represented in Figure \ref{fig:reducednetwork}.
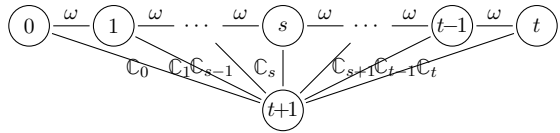
\begin{figure}[h!]
\centering
\begin{tikzpicture}
[scale=0.75,auto,
state/.style={draw,circle,scale=0.75,solid,minimum size=0.72cm,inner sep=0pt},
textt/.style={draw=none,scale=0.75,solid,inner sep=3pt},
>=latex,shorten >=1.3pt,shorten <=1.3pt,-,node distance=1.5cm]
\node[state]    			(A)                     {$0$};
\node[state]    			(B)[right of=A]   {$1$};
\node[state,draw=none]    	(C)[right of=B]   {$\cdots$};
\node[state]    			(D)[right of=C]   {$s$};
\node[state,draw=none]    	(E)[right of=D]   {$\cdots$};
\node[state]		    	(F)[right of=E]   {$t\!\!-\!\!1$};
\node[state]    			(G)[right of=F]   {$t$};
\node[state]    			(H)[below of=D]   {$t\!\!+\!\!1$};
\path
(A) edge   	node[textt,above] {$\omega$}      	(B)
(B) edge   	node[textt,above] {$\omega$}      	(C)
(C) edge   node[textt,above] {$\omega$}      	(D)
(D) edge   node[textt,above] {$\omega$}      	(E)
(E) edge   node[textt,above] {$\omega$}      	(F)
(F) edge   node[textt,above] {$\omega$}      	(G)

(A) edge   node[textt,left] {$\CC_{0}$}      	(H)
(B) edge   node[textt,left] {$\CC_{1}$}      	(H)
(C) edge   node[textt,left] {$\CC_{s-1}$}      	(H)
(D) edge   node[textt,left] {$\CC_{s}$}      	(H)
(E) edge   node[textt,right] {$\CC_{s+1}$}      	(H)
(F) edge   node[textt,right] {$\CC_{t-1}$}      	(H)
(G) edge  	node[textt,right] {$\CC_{t}$}      	(H);
\end{tikzpicture}
\caption{Reduced network.}
\label{fig:reducednetwork}
\end{figure}

In the reduced network, the conductance between the pairs of nodes $s$ and $s+1$, for $s\in\{0,\ldots,t-1\}$, remain $\omega$ as in the unreduced network, i.e., the computation tree.
The effect of all the nodes different from $0,\ldots,t$ in the original graph is summarized in the reduced network by node $t+1$. For any $s\in\{0,\ldots,t\}$, between node $s$ and node $t+1$ there is a new conductor $\CC_s$, which is obtained by reducing the subtree having root vertex $s$ in the original graph.
It is easy to verify that
\begin{align*}
	\CC_0
	= \omega (d-1),\qquad
	\CC_s 
	= \omega \frac{(d-2)^2}{d-1}(1+h_{s+1}), s\in\{1,\ldots,t-1\},\qquad
	\CC_t 
	= \omega (d-2)(1+h_{t}),
\end{align*}
with $h_s:=\frac{1}{(d-1)^{s}-1}$. 
Let $p_0:=\frac{\omega}{\omega+\CC_0},q_0:=1-p_0$, $p_s=q_s:=\frac{\omega}{2\omega+\CC_s}$ for $s\in\{1,\ldots,t-1\}$, $p_t:=\frac{\CC_t}{\omega+\CC_t},q_t:=1-p_t$.
By the Markov property we have
\begin{align}
	f_s = f_{s-1} q_s + f_{s+1} p_s,
	\quad s\in\{1,\ldots,t\},
	\label{eqn:sys}
\end{align}
with $f_0=1,f_{t+1}=0$. This system of equations corresponds to a modification of the classical Gambler Ruin's problem, where states $0$ and $t+1$ are the final states, and each intermediate state $s$ has a probability $r_s:=1-p_s-q_s$ to reach state $t+1$. See Figure \ref{fig:gambler killed}.
\begin{figure}[h!]
\centering
\begin{tikzpicture}
[scale=0.75,auto,
state/.style={draw,circle,scale=0.75,solid,minimum size=0.72cm,inner sep=0pt},
textt/.style={draw=none,scale=0.75,solid,inner sep=3pt},
>=latex,shorten >=1.3pt,shorten <=1.3pt,->,node distance=1.5cm]
\node[state]    			(A)                     {$0$};
\node[state]    			(B)[right of=A]   {$1$};
\node[state,draw=none]    	(C)[right of=B]   {$\cdots$};
\node[state]    			(D)[right of=C]   {$s$};
\node[state,draw=none]    	(E)[right of=D]   {$\cdots$};
\node[state]		    	(F)[right of=E]   {$t\!\!-\!\!1$};
\node[state]    			(G)[right of=F]   {$t$};
\node[state]    			(H)[right of=G]   {$t\!\!+\!\!1$};
\path
(A) edge[bend left=60,below]   	node[textt,above] {$p_0$}      	(B)
(B) edge[bend left=60,below]   	node[textt,below] {$q_1$}      	(A)
(B) edge[bend left=60,below]   	node[textt,above] {$p_1$}      	(C)
(C) edge[bend left=60,below]   	node[textt,below] {$q_2$}      	(B)
(C) edge[bend left=60,below]   	node[textt,above] {$p_{s-1}$}	(D)
(D) edge[bend left=60,below]   	node[textt,below] {$q_s$}      	(C)
(D) edge[bend left=60,below]   	node[textt,above] {$p_{s}$}	(E)
(E) edge[bend left=60,below]   	node[textt,below] {$q_{s+1}$}  	(D)
(E) edge[bend left=60,below]   	node[textt,above] {$p_{t-2}$}	(F)
(F) edge[bend left=60,below]   	node[textt,below] {$q_{t-1}$}  	(E)
(F) edge[bend left=60,below]   	node[textt,above] {$p_{t-1}$}	(G)
(G) edge[bend left=60,below]    node[textt,below] {$q_{t}$}  	(F)
(G) edge[bend left=60,below]    node[textt,above] {$p_{t}$}     	(H)

(A) edge[out=90,in=90,below]   	node[textt,above] {$q_0$}		(H)
(B) edge[out=90,in=90,below]   	node[textt,above] {$r_{1}$}	(H)
(C) edge[out=90,in=90,below]   	node[textt,above] {$r_{s-1}$}	(H)
(D) edge[out=90,in=90,below]   	node[textt,above] {$r_{s}$}	(H)
(E) edge[out=90,in=90,below]   	node[textt,above] {$r_{s+1}$}	(H)
(F) edge[out=90,in=90,below]   	node[textt,above] {$r_{t-1}$}	(H);
\end{tikzpicture}
\caption{Gambler's Ruin on the reduced network.}
\label{fig:gambler killed}
\end{figure}
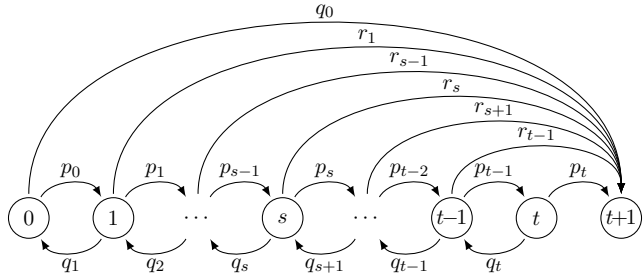

Without loss of generality, we set $r_{t}=0$.
We can easily write the solution of this system of equations in a compact form. Define $\alpha_1 = \alpha_2 := 1, \alpha_s := \alpha_{s-1} - p_{s-2}q_{s-1}\alpha_{s-2}$ for $s\ge 3$. Define $\beta_1 = \beta_2 := 1$, $\beta_s := \beta_{s-1} - p_{s-1}q_{s}\beta_{s-2}$ for $s\ge 3$. Then, as a function of $f_1$, we can write the solution of \eqref{eqn:sys} as
$
	f_s = \frac{1}{p_1\cdots p_{s-1}} (\alpha_sf_1 - \beta_{s-1}q_1),
$
for $s\in\{2,\ldots,t+1\}$.
In fact, it is easy to verify that this expression satisfies system \eqref{eqn:sys}. Using $f_{t+1}=0$ we find $f_1=q_1\beta_t /\alpha_{t+1}$ and 
$
	f_s = \frac{p_0q_1}{p_0\cdots p_{s-1}} 
	(\frac{\alpha_s \beta_t}{\alpha_{t+1}} - \beta_{s-1})
$
for $s\in\{1,\ldots,t+1\}$, with $\beta_0:=0$. Lemma \ref{lem:computationdevice} yields the following, using that $\dd_\vv=\omega d$,
\begin{align*}
	\bar \LL^{-1}_{\tilde\vv \vv} &= \frac{f_{t-1}}{(1-f_1p_0)\dd_\vv}
	= \frac{1}{\omega d} \, \frac{q_1}
	{p_1\cdots p_{t-2} (\alpha_{t+1}-p_0q_1\beta_t)} \, 
	(\alpha_{t-1}\beta_t-\alpha_{t+1}\beta_{t-2}),\\
	\bar \LL^{-1}_{\tilde\ww \vv} &= \frac{f_{t}}{(1-f_1p_0)\dd_\vv}
	= \frac{1}{\omega d} \, \frac{q_1}
	{p_1\cdots p_{t-2} (\alpha_{t+1}-p_0q_1\beta_t)} \, 
	\frac{1}{p_{t-1}} \, (\alpha_{t}\beta_t-\alpha_{t+1}\beta_{t-1}).
\end{align*}
Using the recursive formulas for $\alpha_{t+1}$ and $\beta_t$, and the fact that $p_t+q_t=1$, we get
$
	\bar \LL^{-1}_{\tilde\vv \vv} - \bar \LL^{-1}_{\tilde\ww \vv} 
	= \frac{1}{\omega d} \, \frac{q_1}
	{p_1\cdots p_{t-2} (\alpha_{t+1}-p_0q_1\beta_t)} \, 
	p_t \, (\alpha_{t-1}\beta_{t-1}-\alpha_{t}\beta_{t-2}),
$
which, as
$
	\alpha_{s}\beta_s-\alpha_{s+1}\beta_{s-1}
	= p_{s-1}q_s (\alpha_{s-1}\beta_{s-1}-\alpha_{s}\beta_{s-2}),
$
becomes
\begin{align}
	\bar \LL^{-1}_{\tilde\vv \vv} - \bar \LL^{-1}_{\tilde\ww \vv}
	= \frac{p_t}{q_t} \, \bar \LL^{-1}_{\tilde\ww \vv}
	= \frac{1}{\omega d (d-1)^{t-1}} \frac{1}{\xi_{t-1}},
	\label{regular case general formula}
\end{align}
where $\xi_{t-1}:=\frac{1}{(d-1)^{t-1}}\frac{\alpha_{t+1}-p_0q_1\beta_{t}}{q_1\cdots q_{t-1}p_{t}}$ 
is defined by
$
	\xi_s = 
	\frac{p_{s}}{q_{s}p_{s+1}(d-1)} \xi_{s-1}
	- \frac{p_{s-1}p_{s}q_{s+1}}{q_{s-1}q_{s}p_{s+1}(d-1)^2} \xi_{s-2}
$
for $s\in \{2,\ldots,t-1\}$, with $\xi_0 := \frac{p_0q_1}{p_1}$ and $\xi_1 = \frac{1}{d-1}\frac{1-p_0q_1-p_1q_2}{q_1p_2}$. Plugging in the definitions of the transition probabilities, recalling the quantities defined in Proposition \ref{prop:constant d t}, it is immediate to verify that $\delta_s\equiv\xi_s$ for $s\in\{0,\ldots,t-2\}$ and $c_{d,t}\equiv\xi_{t-1}$.
By symmetry, it is clear that
$
	\bar \LL^{-1}_{\tilde\vv \vv} - \bar \LL^{-1}_{\tilde\ww \vv}
	= \frac{1}{\omega  d(d-1)^{t-1}} \frac{1}{c_{d,t}}
$
holds for \emph{any} $\vv\in\VV^{t-1}$ that is contained in the subtree of $\TT$ with root vertex $\tilde \vv$.
On the other hand, again by symmetry, for any $\vv\in\VV^{t-1}$ contained in the subtree of $\TT$ with root vertex $\tilde \ww$ we have
$
	\bar \LL^{-1}_{\tilde\vv \vv} - \bar \LL^{-1}_{\tilde\ww \vv}
	= - \frac{1}{\omega  d(d-1)^{t-1}} \frac{1}{c_{d,t}}.
$
Theorem \ref{thm:errorcharacterization} yields
\begin{align*}
	x^\star_{\tilde e} - \hat x^t_{\tilde e}
	= \omega^2\!
	\sum_{\vv\in\VV^{t-1}} 
	(\bar\LL^{-1}_{\tilde\vv\vv} - \bar\LL^{-1}_{\tilde\ww\vv})
	\sum_{\ww\in\VV^t} 
	\nu^\star_{\sigma(\ww)}
	= 
	\frac{\omega}{d(d-1)^{t-1}} \frac{1}{c_{d,t}}
	\sum_{v\in V}
	 \nu^\star_{v} (\#^{t}_{\tilde v \rightarrow v,\tilde w} - \#^{t}_{\tilde w \rightarrow v,\tilde v}),
\end{align*}
where $\#^{t}_{\tilde v \rightarrow v,\tilde w}$ is the number of non-backtracking paths in the original undirected graph $G=(V,E)$ that connect node $\tilde v$ to node $v$ in $t$ time steps, where the first step is different from $\tilde w$. As the total number of non-backtracking paths after $t$ steps is $d(d-1)^{t-1}$, we have $\vec{\mathbf{P}}_{\tilde v}(Y_1\neq \tilde w,Y_t=v)=\#^{t}_{\tilde v \rightarrow v,\tilde w}/(d(d-1)^{t-1})$. The proof follows as $\vec{\mathbf{P}}_{\tilde v}(Y_1\neq \tilde w,Y_t=v) = \vec{\mathbf{P}}_{\tilde v}(Y_t=v | Y_1\neq \tilde w) \vec{\mathbf{P}}_{\tilde v}(Y_1\neq \tilde w)$, and $\vec{\mathbf{P}}_{\tilde v}(Y_1\neq \tilde w) = (d-1)/d$.
\end{proof}

\subsection{Voltage Problem, Algorithm \ref{alg:Min-Sum, quadratic messages, no leaves}}

We now give an overview of the results in the voltage case. The main framework for these results is the same as the one used in the previous section to investigate the behavior of the Min-Sum algorithm applied to the flow problem. For this reason, as outlined at the beginning of Section \ref{sec:proofs}, here we only discuss in details the parts of the proofs that present major differences compared to the flow problem. 

Henceforth we consider Algorithm \ref{alg:Min-Sum voltages, quadratic messages} when the initial messages are parametrized by $\{W^{0}_{e\rightarrow v}=W_{vw}, e=\{v,w\}\}, \{h^{0}_{e\rightarrow v}=p_{e\rightarrow v}\}$, for a certain set of real numbers $p=\{p_{e\rightarrow v}\}$.
Also in this case, as far as the analysis is concerned, it is convenient to consider the form of Algorithm \ref{alg:Min-Sum voltages}. For a given choice of perturbation parameters $p=\{p_{e\rightarrow v}\}$, define the initial messages as
$
	\mu^{0}_{e\rightarrow v}(\,\cdot\,,p)
	: z\in\R \longrightarrow
	\mu^{0}_{e\rightarrow v}(z,p)
	= \frac{1}{2} 
	W_{vw}z^2 + p_{e\rightarrow v}z,
$
where $e=\{v,w\}$, and denote by $\{\mu^{s}_{e\rightarrow v}(\,\cdot\,,p)\}$, $s\ge 1$, the corresponding sequence of messages generated by the Min-Sum algorithm. Denote the estimates at time $t$ as $\hat \nu_v^t = \arg\min_{z\in\R} \mu^{t}_{v}(z,p)$, where $\mu^{t}_{v}(\,\cdot\,,p)$ is the corresponding belief function.

\subsubsection{Computation Tree (Voltage Problem)}\label{sec:comp tree voltage}
The computation tree for the voltage problem differs from the one in the flow problem due to the fact that the Min-Sum algorithms for the voltage problem propagates messages that are supported on vertices, not on edges. For this reason, the computation tree that we will now define is rooted at a vertex, not at an edge as the one defined in Section \ref{sec:comp tree}.

Given a vertex $\tilde v \in V$, the computation tree rooted at $\tilde v$ of depth $t$ supports the optimization problem that is obtained by unfolding the computations involved in the Min-Sum estimate $\hat \nu^{t}(p)_{\tilde v}$.
Formally, the computation tree is an undirected tree $\TT=(\VV,\EE)$ where each vertex in $\VV$ is mapped to a vertex in $V$ through a map $\sigma:\VV \rightarrow V$ that preserves the edge structure, namely, if $\ee=\{\vv,\ww\}\in \EE$ then $\sigma(\ee):=\{\sigma(\vv),\sigma(\ww)\}\in E$. The tree $\TT$ is defined iteratively. Initially, at level $-1$, the tree is made by a single root vertex $\tilde \vv\in\VV$ corresponding to $\tilde v$, i.e., $\sigma(\tilde \vv)=\tilde v$. Consecutively, for any $z\in\mathcal{N}(\tilde v)$, a vertex $\zz$ with $\sigma(\zz)=z$ and an edge $\{\tilde\vv,\zz\}$ are added to the level $0$ of the tree. For the remaining $t-1$ levels, the following procedure is repeated. The leaves in the tree are examined. Given a leaf $\vv$ with $\sigma(\vv)=v$ that is connected to a vertex $\ww$ with $\sigma(\ww)=w$, for any $z\in\mathcal{N}(v)\setminus w$, a vertex $\zz$ with $\sigma(\zz)=z$ and an edge $\{\vv,\zz\}$ are added to the next level of the tree. Figure \ref{fig:computationtree voltage} gives an example.
We denote the set of vertices and edges in the $k$-th level of the tree respectively by $\VV^{k}\subset\VV$ and $\EE^{k}\subset\EE$.
In what follows we also extend the usual neighborhood notation to vertices and edges in the graph $\TT$, namely, $\partial\vv$ is the set of edges in $\TT$ that are connected to node $\vv$, and $\partial\ee$ is the set of vertices in $\TT$ that are connected to edge $\ee$.

\begin{figure}[h!]
\begin{subfigure}{.45\textwidth}
\centering

\begin{tikzpicture}
[scale=0.75,every node/.style={draw,circle,scale=0.75,solid,minimum size=0.72cm,inner sep=0pt},
-,>=latex,shorten >=1pt,shorten <=1pt]

\def \x {2}
\def \y {-2}

\node (1) at (0,0) {1};
\node (2) at (\x,0) {2};
\node (3) at (\x,\y) {3};
\node (4) at (0,\y) {4};

\draw (1) -- (2);
\draw (2) -- (3);
\draw (4) -- (3);
\draw (4) -- (1);
\draw (3) -- (1);

\end{tikzpicture}
\end{subfigure}
\begin{subfigure}{.45\textwidth}
\centering

\begin{tikzpicture}
[scale=0.75,every node/.style={draw,circle,scale=0.75,solid,minimum size=0.72cm,inner sep=0pt},
,>=latex,shorten >=1.3pt,shorten <=1.3pt,
level distance=1.2cm, 
level 1/.style={sibling distance=2cm},
level 2/.style={sibling distance=1cm}]

\node (Root1) {$1$}
child {
	node {$2$} edge from parent [-]
	child {
		node {$3$} edge from parent [-]
			child { node{$1$} edge from parent [-]}
			child { node{$4$} edge from parent [-]}
		}
	}
child {
	node {$3$} edge from parent [-]
	child { 
		node {$2$}  edge from parent [-]
		child { node{$1$} edge from parent [-] }
	}
	child {
		node {$4$}  edge from parent [-]
		child { node{$1$} edge from parent [-] }
	}
	}
child {
	node {$4$} edge from parent [-]
	child {
		node {$3$} edge from parent [-]
			child { node{$1$} edge from parent [-]}
			child { node{$2$} edge from parent [-]}
		}
	};
\end{tikzpicture}
\end{subfigure}
\caption{Graph $G=(V=\{1, 2, 3, 4\}, E)$ (left) and computation tree $\TT=(\VV,\EE)$ of depth $t=2$ with root edge corresponding to $\text{1}\in V$ (right). Vertices in $\TT$ are labeled by the corresponding vertices in $G$.}
\label{fig:computationtree voltage}
\end{figure}
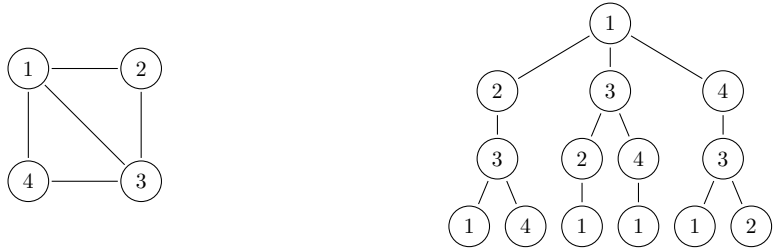

The $t$-th (last) level of the tree is of particular relevance as it supports the initial conditions of the Min-Sum algorithm, as we are now about to describe. Let $\WW\in\R^{\VV\times \VV}$ be the diagonal matrix defined by $\WW_{\vv\ww}:=W_{\sigma(\vv)\sigma(\ww)}$ if $\{\vv,\ww\}\in\EE$ and $\WW_{\vv\ww}:=0$ otherwise. Let $\bar \VV := \VV\setminus \VV^{t}$.
Define the weighted degree of a vertex $\vv$ by $\dd_\vv:=\sum_{\ww\in \VV} \WW_{\vv\ww}$, and let $\DD\in\R^{\VV\times \VV}$ be the diagonal matrix defined by $\DD_{\vv\vv}:=\dd_\vv$. The Laplacian $\LL$ of the graph $\TT$ is the matrix defined as $\LL:=\DD-\WW$. Let $\bar\LL\in\R^{\bar\VV\times\bar\VV}$ denote the submatrix of $\LL$ that corresponds to the rows and columns associated to $\bar\VV$.
For a given choice of the perturbation parameters $p=\{p_{e\rightarrow v}\}$, define $\bar\pp(p)\in\R^{\bar\VV}$ by
\begin{align}
	\bar\pp(p)_\vv :=
	\begin{cases}
		\sum_{\ff \in\partial \vv\setminus \ee} p_{\sigma(\ff)\rightarrow \sigma(\vv)} &\text{if } \vv\in\VV^{t-1}, \ee=\partial \vv \setminus \EE^{t},\\
		0 &\text{otherwise},
	\end{cases}
	\label{def:p bar voltages}
\end{align}
and let $\bar\bb\in\R^{\bar\VV}$ defined by
$
	\bar\bb_\vv := b_{\sigma(\vv)}
$
for each $\vv\in\bar\VV$.

Consider the following problem supported on the computation tree $\TT$, over $\bar\bbnu\in\R^{\bar \VV}$:
\begin{align}
\begin{aligned}
	\text{minimize }\quad    & \frac{1}{2} \bar\bbnu^T \bar\LL \bar\bbnu + (\bar\pp(p) - \bar\bb)^T \bar\bbnu.
\end{aligned}
\label{computation tree voltage}
\end{align}

While the Laplacian $\LL$ is positive semi-definite, the restricted Laplacian $\bar\LL$ is positive definite \citep{spielman10}. Hence, for any choice of the perturbation $p$ the objective function in problem \eqref{computation tree voltage} is strictly convex and it admits a unique solution $\bar\bbnu^\star(p)$. The relationship between the Min-Sum algorithm and the tree is made explicit by the following lemma, which can be established by inductively examining the operations of Algorithm \ref{alg:Min-Sum voltages}.

\begin{lemma}\label{lem:computation tree voltage}
Given the setting of this section, $\hat \nu^t(p)_{\tilde v}=\bar\bbnu^\star(p)_{\tilde \vv}$.
\end{lemma}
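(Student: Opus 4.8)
The plan is to show that the synchronous recursion of Algorithm~\ref{alg:Min-sum voltages} is nothing but the leaf-to-root dynamic-programming (Bellman) recursion that solves the tree problem \eqref{computation tree voltage}, so that the two quantities agree by construction; this is the voltage analogue of the inductive argument behind Lemma~\ref{lem:computation tree} for the flow problem. Because $\TT$ is a tree, fixing the value of $\bar\bbnu$ at any non-leaf vertex $\vv$ decouples \eqref{computation tree voltage} into independent minimizations over the disjoint subtrees hanging below $\vv$. This separability is exactly what a backward induction on the levels of $\TT$, from the leaves $\VV^t$ up to the root $\tilde\vv$, exploits, and it is exactly what each message update performs.

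First I would make explicit the decomposition of the tree objective. Writing $\bar\bbnu=(\bbnu_\vv)_{\vv\in\bar\VV}$ and using that $\dd_\vv=\sum_{\ww\in\VV}\WW_{\vv\ww}$ counts the leaf neighbours of $\vv$, the quadratic form splits as
\begin{align*}
	\tfrac12\,\bar\bbnu^T\bar\LL\bar\bbnu
	= \tfrac12\!\!\sum_{\substack{\{\vv,\ww\}\in\EE\\ \vv,\ww\in\bar\VV}}\!\!\WW_{\vv\ww}(\bbnu_\vv-\bbnu_\ww)^2
	+ \tfrac12\sum_{\vv\in\VV^{t-1}}\ \sum_{\substack{\ww\in\VV^t\\ \{\vv,\ww\}\in\EE}}\!\!\WW_{\vv\ww}\,\bbnu_\vv^2,
\end{align*}
so the leaf variables enter only as the clamped value $0$ inside the edge energies, while $-\bar\bb^T\bar\bbnu$ reproduces the vertex potentials $g_{\sigma(\vv)}(\bbnu_\vv)=-b_{\sigma(\vv)}\bbnu_\vv$ and $\bar\pp(p)^T\bar\bbnu$ the perturbations. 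Comparing term by term with the initial messages $\mu^0_{e\rightarrow v}(z)=\tfrac12 W_{vw}z^2+p_{e\rightarrow v}z$ shows that the quadratic coefficient $W_{vw}$ is precisely the leaf-edge energy $\tfrac12\WW_{\vv\ww}\bbnu_\vv^2$ with the leaf held at $0$, and that the definition \eqref{def:p bar voltages} of $\bar\pp(p)_\vv$ is exactly the sum of the linear coefficients $p_{\sigma(\ff)\rightarrow\sigma(\vv)}$ of the leaf messages entering $\vv\in\VV^{t-1}$. This identification seeds the induction at the boundary.

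Next I would run the induction. For a non-root vertex $\ww$ with parent $\vv$ and $\ee=\{\vv,\ww\}$, let $J^{(s)}_\ee(z)$ be the optimal value of the part of the objective \eqref{computation tree voltage} supported on the depth-$s$ subtree of $\TT$ rooted at $\ww$, with $\bbnu_\vv=z$ held fixed and the leaves of that subtree clamped to $0$. The claim, to be proved by induction on $s$, is that $\mu^s_{\sigma(\ee)\rightarrow\sigma(\vv)}(z)=J^{(s)}_\ee(z)$ up to an additive constant, the case $s=0$ being the boundary identification above. The update rule
\begin{align*}
	\mu^s_{e\rightarrow v}(\nu_v)
	= \min_{\nu_w\in\R}\Big\{-b_w\nu_w+\tfrac12 W_{vw}(\nu_v-\nu_w)^2+\!\!\sum_{f\in\partial w\setminus e}\!\!\mu^{s-1}_{f\rightarrow w}(\nu_w)\Big\}
\end{align*}
is exactly the Bellman recursion for $J^{(s)}_\ee$: it adds the local potential $-b_w\nu_w$ and the parent-edge energy $\tfrac12 W_{vw}(\nu_v-\nu_w)^2$ to the children's value functions $\sum_f J^{(s-1)}_{f}$ and minimizes over the child value $\nu_w$. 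Since $\bar\LL\succ0$ each such subproblem is strictly convex with a unique minimizer, and by Lemma~\ref{lem:min-sum updates voltages} the messages stay quadratic, so the recursion is well defined at every level. At the root, $\mu^t_{\tilde v}=g_{\tilde v}+\sum_{e\in\partial\tilde v}\mu^t_{e\rightarrow\tilde v}$ equals, up to a constant, the full objective of \eqref{computation tree voltage} after every non-root variable has been eliminated, viewed as a function of $\bbnu_{\tilde\vv}=z$; hence $\hat\nu^t(p)_{\tilde v}=\arg\min_z\mu^t_{\tilde v}(z)$ coincides with the root component $\bar\bbnu^\star(p)_{\tilde\vv}$ of the minimizer of \eqref{computation tree voltage}, which is the claim.

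I expect the only real obstacle to be the boundary bookkeeping, not the recursion itself. One must check that the \emph{restricted} Laplacian $\bar\LL$, rather than the full $\LL$, is the correct operator --- i.e.\ that dropping the leaf variables and absorbing the leaf edges into the $\tfrac12\WW_{\vv\ww}\bbnu_\vv^2$ terms above is consistent with seeding the messages by $\mu^0$ --- and that the time/level indexing lines up, the perturbation being supported on $\VV^{t-1}$ and entering through the level-$t$ initial messages. Finally, the additive constants $c^s_{e\rightarrow v}$ generated by the quadratic updates must be tracked only loosely: as already noted after Lemma~\ref{lem:min-sum updates voltages}, they do not affect any $\arg\min$, which is exactly why the value-function identification is needed only up to a constant without harming the conclusion.
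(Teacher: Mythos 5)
Your proof is correct and is essentially the paper's own argument: the paper dispatches Lemma~\ref{lem:computation tree voltage} with the remark that it is ``easily established by inductively examining the operations of Algorithm~\ref{alg:Min-sum voltages}'', and your backward induction identifying each message $\mu^{s}_{e\rightarrow v}$ with the subtree value function $J^{(s)}_{\ee}$ (up to constants) is precisely that induction made explicit, including the correct boundary bookkeeping that matches the restricted Laplacian $\bar\LL$ to leaves clamped at zero and $\bar\pp(p)$ in \eqref{def:p bar voltages} to the linear coefficients of the initial messages.
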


\subsubsection{Fix Point (Voltage Problem)}
The choice $p=0$ for the linear perturbation parameters yields the original (unperturbed) algorithm. The next theorem below shows that for a particular choice of $p$ the Min-Sum algorithm yields the optimal solution at any time $t\ge 1$. The optimal choice of $p$ is a function of the optimal solution $\nu^\star$ for problem \eqref{dual}. Recall that, by definition, $\nu^\star$ is the unique vector that satisfy $L\nu^\star = b$ and $1^T\nu^\star =0$. Hence, component-wise, for each $v\in V$,
\begin{align}
	d_v\nu^\star_v - \sum_{w\in\partial v} W_{vw} \nu^\star_w - b_v = 0,
	\label{optimality voltage}
\end{align}
where recall that $d_v:=\sum_{w\in\partial v} W_{vw}$.
Given $\nu^\star$, we are now ready to define the set of parameters $p^\star=\{p^\star_{e\rightarrow v}\}$ so that the Min-Sum algorithm yields the optimal solution at any time step.

\begin{theorem}\label{thm: opt choice p voltage}
For each $e\in E$, $v\in\partial e$, let $w=\partial e\setminus v$ and define
$
	p^\star_{e\rightarrow v}
	:= 
	- W_{vw} \nu^\star_w.
$
Then, for any $\tilde v\in V$, $t\ge 1$, we have $\hat \nu^t(p^\star)_{\tilde v} = \nu^\star_{\tilde v}$.
\end{theorem}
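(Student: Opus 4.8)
The plan is to follow the template of Theorem~\ref{thm: opt choice p} for the flow problem, but in the simpler unconstrained setting. Since $\bar\LL$ is positive definite, problem~\eqref{computation tree voltage} is strictly convex and its unique minimizer $\bar\bbnu^\star(p)$ is characterized by the single normal equation $\bar\LL\,\bar\bbnu^\star(p) = \bar\bb - \bar\pp(p)$. By Lemma~\ref{lem:computation tree voltage} it is enough to prove $\bar\bbnu^\star(p^\star)_{\tilde\vv} = \nu^\star_{\tilde v}$, so I would exhibit an explicit vector that solves this normal equation at $p=p^\star$ and read off its root component.

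Concretely, fix $\tilde v\in V$ and $t\ge 1$, build the computation tree $\TT$ rooted at $\tilde v$, and propose the candidate $\bar\bbnu_\vv := \nu^\star_{\sigma(\vv)}$ for every $\vv\in\bar\VV$. Its root component is $\nu^\star_{\sigma(\tilde\vv)} = \nu^\star_{\tilde v}$, as required, so the whole argument reduces to checking that this $\bar\bbnu$ satisfies $\bar\LL\,\bar\bbnu = \bar\bb - \bar\pp(p^\star)$ coordinate by coordinate. Throughout I would use that $\sigma$ is edge-preserving, so that for any non-leaf $\vv$ the tree-neighbors of $\vv$ are in bijection with $\mathcal{N}(\sigma(\vv))$ and the tree-degree satisfies $\dd_\vv = d_{\sigma(\vv)}$.

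The verification splits according to whether $\vv\in\bar\VV$ is adjacent to the leaf level $\VV^t$. For an interior vertex $\vv\in\bar\VV\setminus\VV^{t-1}$ (including the root when $t\ge 2$), all tree-neighbors lie in $\bar\VV$ and $\bar\pp(p^\star)_\vv=0$ by \eqref{def:p bar voltages}, so $(\bar\LL\bar\bbnu)_\vv = d_{\sigma(\vv)}\nu^\star_{\sigma(\vv)} - \sum_{w\in\mathcal{N}(\sigma(\vv))} W_{\sigma(\vv)w}\nu^\star_w = b_{\sigma(\vv)} = \bar\bb_\vv$ directly by the optimality relation \eqref{optimality voltage}. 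For a penultimate vertex $\vv\in\VV^{t-1}$ with parent $\ww_0\in\bar\VV$, restricting $\LL$ to $\bar\VV$ deletes the couplings to the leaf-children of $\vv$, leaving $(\bar\LL\bar\bbnu)_\vv = d_{\sigma(\vv)}\nu^\star_{\sigma(\vv)} - W_{\sigma(\vv)\sigma(\ww_0)}\nu^\star_{\sigma(\ww_0)}$; here I would compute from \eqref{def:p bar voltages} together with $p^\star_{e\rightarrow v}=-W_{vw}\nu^\star_w$ that $\bar\pp(p^\star)_\vv = -\sum_\ww W_{\sigma(\vv)\sigma(\ww)}\nu^\star_{\sigma(\ww)}$, the sum running over the leaf-children $\ww$ of $\vv$. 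Substituting into $\bar\bb_\vv - \bar\pp(p^\star)_\vv$ and recombining the parent term with the leaf terms into $\sum_{w\in\mathcal{N}(\sigma(\vv))}$ again reproduces exactly \eqref{optimality voltage} at node $\sigma(\vv)$.

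Once both cases are verified, $\bar\bbnu$ solves the normal equation, so by uniqueness $\bar\bbnu=\bar\bbnu^\star(p^\star)$ and Lemma~\ref{lem:computation tree voltage} gives $\hat\nu^t(p^\star)_{\tilde v}=\bar\bbnu^\star(p^\star)_{\tilde\vv}=\nu^\star_{\tilde v}$. I expect the only substantive point---the analog of the key idea in the flow proof---to be the penultimate-level bookkeeping: one must recognize that the linear perturbation $p^\star$ is engineered precisely to reinject the Laplacian couplings that are severed when passing from $\LL$ to the restricted $\bar\LL$, thereby turning the boundary equations into faithful copies of the global condition \eqref{optimality voltage}. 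The degenerate case $t=1$, where the root itself sits next to the leaves, is checked directly by the same substitution.
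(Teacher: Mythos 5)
Your proposal is correct and follows essentially the same route as the paper: the paper's proof likewise writes the first-order optimality (normal) equations for the tree problem \eqref{computation tree voltage} at $p=p^\star$, splits them into the interior case $\vv\in\bar\VV\setminus\VV^{t-1}$ and the boundary case $\vv\in\VV^{t-1}$, checks that the candidate $\bar\bbnu_\vv=\nu^\star_{\sigma(\vv)}$ satisfies both via \eqref{optimality voltage}, and concludes by uniqueness and Lemma \ref{lem:computation tree voltage}. The only difference is that you spell out the penultimate-level bookkeeping (that $\dd_\vv=d_{\sigma(\vv)}$, and that $\bar\pp(p^\star)$ exactly reinjects the leaf couplings severed in passing from $\LL$ to $\bar\LL$), which the paper compresses into ``it is easy to check.''
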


\begin{proof}
Fix $\tilde v\in V$, $t\ge 1$. Consider the setting in Section \ref{sec:comp tree voltage}, and let $\TT$ be the computation tree of depth $t$ rooted at $\tilde v$.
From the first order optimality conditions for problem \eqref{computation tree voltage} corresponding to the choice $p=p^\star$, we know that $\bar\bbnu^\star \equiv \bar\bbnu^\star(p^\star)$ is the unique vector so that
\begin{align*}
	\begin{cases}
	\dd_\vv\bar\bbnu^\star_\vv - \sum_{\ww\in\partial \vv} \WW_{\vv\ww} \bar\bbnu^\star_\ww - \bar \bb_\vv = 0,
	\vv\in\bar\VV \setminus \VV^{t-1},\\
	\dd_\vv\bar\bbnu^\star_\vv - \WW_{\vv\ww} \bar\bbnu^\star_\ww + \bar \pp(p^\star)_\vv- \bar \bb_\vv = 0,
	\vv\in\VV^{t-1}, \{\vv,\ww\}=\partial\vv \setminus \EE^{t}.
	\end{cases}
\end{align*}
Using \eqref{optimality voltage} it is easy to check that the choice $\bar\bbnu^\star_{\vv}=\nu^\star_{\sigma(\vv)}$ satisfies the above system of equations. The proof is concluded by Lemma \ref{lem:computation tree voltage}.
\end{proof}

\subsubsection{Error Characterization (Voltage Problem)}
The next lemma characterizes the sensitivity of the $\tilde \vv$-th component (root vertex of $\TT$) of the optimal solution of the optimization problem \eqref{computation tree voltage} with respect to perturbations of the parameters $p_{e\rightarrow v}$'s, which are supported on $\VV^{t-1}$.

\begin{lemma}\label{lem:corr tree voltage}
Consider the setting in Section \ref{sec:comp tree voltage}. Fix $\tilde v\in V$, $t\ge 1$, and let $\TT$ be the computation tree of depth $t$ rooted at $\tilde v$, with root edge $\tilde \vv$. For any $p=\{p_{e\rightarrow v}\}$, $e\in E$ and $v\in \partial e$, we have
$$
	\frac{\partial}{\partial p_{e\rightarrow v}} \bar\bbnu^\star(p)_{\tilde \vv} 
	= - \sum_{\vv\in\VV^{t-1}} \bar\LL^{-1}_{\tilde \vv\vv} (\yy_{e\rightarrow v})_\vv,
$$
where $\yy_{e\rightarrow v}\in\R^{\VV^{t-1}}$ is defined, for each $\vv\in\VV^{t-1}$, as
$
	(\yy_{e\rightarrow v})_\vv := 
	\mathbf{1}_{\sigma(\vv)=v}\mathbf{1}_{\sigma(\partial \vv\cap \vec \EE^{t})=e}.
$
\end{lemma}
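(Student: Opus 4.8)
The plan is to exploit the fact, recalled just before Lemma~\ref{lem:computation tree voltage}, that the restricted Laplacian $\bar\LL$ is positive definite. Consequently problem \eqref{computation tree voltage} is a strictly convex unconstrained quadratic, and its unique minimizer is pinned down by the first-order optimality condition alone. Setting the gradient of $\frac{1}{2}\bar\bbnu^T\bar\LL\bar\bbnu + (\bar\pp(p)-\bar\bb)^T\bar\bbnu$ to zero gives $\bar\LL\,\bar\bbnu^\star(p) + \bar\pp(p) - \bar\bb = 0$, so that
$$
	\bar\bbnu^\star(p) = \bar\LL^{-1}\bigl(\bar\bb - \bar\pp(p)\bigr).
$$
Since $\bar\bb$ does not depend on $p$ and, by \eqref{def:p bar voltages}, the map $p \mapsto \bar\pp(p)$ is linear in the parameters $p=\{p_{e\rightarrow v}\}$, differentiating this closed form is immediate: for any $e\in E$, $v\in\partial e$,
$$
	\frac{\partial}{\partial p_{e\rightarrow v}}\bar\bbnu^\star(p) = -\,\bar\LL^{-1}\,\frac{\partial}{\partial p_{e\rightarrow v}}\bar\pp(p).
$$

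The remaining work is to identify the right-hand side with $\yy_{e\rightarrow v}$. First I would read off from \eqref{def:p bar voltages} that $\bar\pp(p)_\vv$ vanishes unless $\vv\in\VV^{t-1}$, which collapses the implicit sum in the matrix–vector product to a sum over $\vv\in\VV^{t-1}$. For such a $\vv$, writing $\ee=\partial\vv\setminus\EE^t$ for the unique edge joining $\vv$ to its parent, one has $\partial\vv\setminus\ee = \partial\vv\cap\EE^t$, so $\bar\pp(p)_\vv = \sum_{\ff\in\partial\vv\cap\EE^t} p_{\sigma(\ff)\rightarrow\sigma(\vv)}$ is a sum over the child edges of $\vv$. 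The summand $p_{\sigma(\ff)\rightarrow\sigma(\vv)}$ equals the variable $p_{e\rightarrow v}$ precisely when $\sigma(\vv)=v$ and $\sigma(\ff)=e$, so differentiating with respect to $p_{e\rightarrow v}$ extracts exactly $\mathbf{1}_{\sigma(\vv)=v}\,\mathbf{1}_{\sigma(\partial\vv\cap\EE^t)=e}=(\yy_{e\rightarrow v})_\vv$. Taking the $\tilde\vv$-th component of the previous display and restricting the sum to $\VV^{t-1}$ then yields the stated formula, with the correct overall minus sign.

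The one point requiring care — and the step I would flag as the main obstacle — is checking that the indicator $\mathbf{1}_{\sigma(\partial\vv\cap\EE^t)=e}$ is well posed, i.e.\ that at most one child edge of a given $\vv\in\VV^{t-1}$ can have $\sigma$-image $e$, so that the derivative really is a $0/1$ quantity. This follows directly from the construction in Section~\ref{sec:comp tree voltage}: the children of a leaf $\vv$ with $\sigma(\vv)=v$ attached to a parent $\ww$ with $\sigma(\ww)=w$ are indexed by the distinct neighbors $z\in\mathcal{N}(v)\setminus w$, so distinct child edges of $\vv$ map under $\sigma$ to distinct undirected edges of $G$ incident to $v$. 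Hence $\sigma$ is injective on $\partial\vv\cap\EE^t$, the event that some child edge maps to $e$ is either empty or a singleton, and the product of indicators coincides with the partial derivative computed above. No further estimates are needed, since positive definiteness of $\bar\LL$ already supplies both existence and uniqueness of $\bar\bbnu^\star(p)$ and the validity of the closed-form solution.
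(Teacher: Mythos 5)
Your proof is correct and takes essentially the same route as the paper's: use the first-order optimality condition for the strictly convex quadratic \eqref{computation tree voltage} to get the closed form $\bar\bbnu^\star(p)=\bar\LL^{-1}(\bar\bb-\bar\pp(p))$, differentiate the linear map $p\mapsto\bar\pp(p)$, and read off $\frac{\partial}{\partial p_{e\rightarrow v}}\bar\pp(p)=\yy_{e\rightarrow v}$ from \eqref{def:p bar voltages}, with the sum collapsing to $\VV^{t-1}$ because $\bar\pp(p)$ vanishes elsewhere. Your additional verification that $\sigma$ is injective on the child edges $\partial\vv\cap\EE^{t}$, so that the indicator $\mathbf{1}_{\sigma(\partial\vv\cap\EE^{t})=e}$ is well posed, is exactly the detail the paper compresses into ``it is easy to check.''
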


\begin{proof}
From the first order optimality conditions, we know that the unique solution of problem \eqref{computation tree voltage} reads
$
	\bar\bbnu^\star(p) 
	= \bar\LL^{-1} (\bar\bb - \bar\pp(p) )
$
so that
$
	\frac{\partial}{\partial p_{e\rightarrow v}} \bar\bbnu^\star(p) 
	= - \bar\LL^{-1} \frac{\partial}{\partial p_{e\rightarrow v}} \bar\pp(p).
$
Using \eqref{def:p bar voltages} it is easy to check that $(\yy_{e\rightarrow v})_\vv=(\frac{\partial}{\partial p_{e\rightarrow v}} \bar\pp(p))_\vv$.
\end{proof}

As a consequence of Lemma \ref{lem:corr tree voltage}, we immediately have the following characterization of the error committed by Algorithm \ref{alg:Min-Sum voltages, quadratic messages} with $\{W^{0}_{e\rightarrow v}=W_{vw}, e=\{v,w\}\}, \{h^{0}_{e\rightarrow v}=0\}$.

\begin{theorem}\label{thm:errorcharacterization voltage}
Consider the setting in Section \ref{sec:comp tree voltage}. Fix $\tilde v\in V$, $t\ge 1$, and let $\TT$ be the computation tree of depth $t$ rooted at $\tilde v$, with root vertex $\tilde\vv\in\VV$. We have
\begin{align*}
	\nu^\star_{\tilde v} - \hat \nu^t_{\tilde v}
	&=
	\sum_{\vv\in\VV^{t-1}} 
	\bar\LL^{-1}_{\tilde\vv\vv}
	\sum_{\ww\in\VV^{t}} 
	\WW_{\vv\ww} \nu^\star_{\sigma(\ww)}.
\end{align*}
\end{theorem}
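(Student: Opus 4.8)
The plan is to follow verbatim the strategy of the flow-problem proof of Theorem~\ref{thm:errorcharacterization}, now carried out over the vertex-rooted computation tree of Section~\ref{sec:comp tree voltage}. The starting point is that $p=0$ recovers the unperturbed algorithm, so by Lemma~\ref{lem:computation tree voltage} we have $\hat\nu^t_{\tilde v}=\hat\nu^t(0)_{\tilde v}=\bar\bbnu^\star(0)_{\tilde\vv}$, while Theorem~\ref{thm: opt choice p voltage} identifies $\nu^\star_{\tilde v}=\hat\nu^t(p^\star)_{\tilde v}=\bar\bbnu^\star(p^\star)_{\tilde\vv}$. Hence the error is exactly the increment $\bar\bbnu^\star(p^\star)_{\tilde\vv}-\bar\bbnu^\star(0)_{\tilde\vv}$ of the tree solution as the perturbation is turned on from $0$ to $p^\star$.

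The second step interpolates linearly. From the proof of Lemma~\ref{lem:corr tree voltage}, the minimizer $\bar\bbnu^\star(p)=\bar\LL^{-1}(\bar\bb-\bar\pp(p))$ is \emph{affine} in $p$, since $\bar\pp(p)$ is linear in $p$ by \eqref{def:p bar voltages}; therefore the directional derivative $\frac{d}{d\theta}\bar\bbnu^\star(\theta p^\star)_{\tilde\vv}$ is constant in $\theta$, and the fundamental theorem of calculus together with the chain rule give
\begin{align*}
	\nu^\star_{\tilde v}-\hat\nu^t_{\tilde v}
	&=\int_0^1 \frac{d}{d\theta}\,\bar\bbnu^\star(\theta p^\star)_{\tilde\vv}\,d\theta
	=\sum_{e\in E,\,v\in\partial e}\frac{\partial}{\partial p_{e\rightarrow v}}\bar\bbnu^\star(p)_{\tilde\vv}\;p^\star_{e\rightarrow v}.
\end{align*}
Substituting the sensitivity from Lemma~\ref{lem:corr tree voltage}, namely $\frac{\partial}{\partial p_{e\rightarrow v}}\bar\bbnu^\star(p)_{\tilde\vv}=-\sum_{\vv\in\VV^{t-1}}\bar\LL^{-1}_{\tilde\vv\vv}(\yy_{e\rightarrow v})_\vv$, and the optimal perturbation $p^\star_{e\rightarrow v}=-W_{vw}\nu^\star_w$ from Theorem~\ref{thm: opt choice p voltage}, reduces the claim to the bookkeeping identity
\begin{align*}
	\sum_{e\in E,\,v\in\partial e}(\yy_{e\rightarrow v})_\vv\,p^\star_{e\rightarrow v}
	=-\sum_{\ww\in\VV^t}\WW_{\vv\ww}\,\nu^\star_{\sigma(\ww)},
\end{align*}
valid for each fixed $\vv\in\VV^{t-1}$, after which the two minus signs cancel and the stated formula emerges.

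The remaining and only delicate point is this last identity. For a fixed $\vv\in\VV^{t-1}$ the indicators in $(\yy_{e\rightarrow v})_\vv=\mathbf{1}_{\sigma(\vv)=v}\mathbf{1}_{\sigma(\partial\vv\cap\EE^t)=e}$ are nonzero only when $v=\sigma(\vv)$ and $e$ is the $\sigma$-image of one of the descending (level-$t$) edges of $\vv$. Since $\vv$ sits at level $t-1$ and was reached non-backtrackingly from its parent, its children are in bijection with the neighbors of $\sigma(\vv)$ in $G$ other than the parent's image, so distinct children $\ww\in\VV^t$ produce distinct edge-images $e=\{\sigma(\vv),\sigma(\ww)\}$; this injectivity is precisely what legitimizes writing $(\yy_{e\rightarrow v})_\vv$ as a plain indicator rather than a multiplicity count. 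Each such term then contributes $p^\star_{e\rightarrow v}=-W_{\sigma(\vv)\sigma(\ww)}\nu^\star_{\sigma(\ww)}=-\WW_{\vv\ww}\nu^\star_{\sigma(\ww)}$, and summing over the children reproduces $-\sum_{\ww\in\VV^t}\WW_{\vv\ww}\nu^\star_{\sigma(\ww)}$, as required. I expect this combinatorial matching of perturbation parameters to descending leaf edges---the vertex/edge dual of the corresponding step in Theorem~\ref{thm:errorcharacterization}---to be the main obstacle; everything else is immediate, because problem~\eqref{computation tree voltage} is an unconstrained, strictly convex quadratic with a closed-form affine minimizer, so no Lagrange-multiplier argument is needed and the $\theta$-integral collapses trivially.
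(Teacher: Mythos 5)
Your proposal is correct and follows essentially the same route as the paper's proof: fix point (Theorem~\ref{thm: opt choice p voltage}) plus sensitivity (Lemma~\ref{lem:corr tree voltage}) combined via the fundamental theorem of calculus, then the bookkeeping identity matching perturbation parameters to the level-$t$ edges. Your two added observations---that $\bar\bbnu^\star(p)$ is affine in $p$ so the $\theta$-integral is trivial, and that the children of $\vv\in\VV^{t-1}$ map injectively to edges of $G$ so $(\yy_{e\rightarrow v})_\vv$ is genuinely an indicator rather than a multiplicity---are exactly the details the paper leaves as ``easy to verify,'' and you verify them correctly.
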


\begin{proof}
From Theorem \ref{thm: opt choice p voltage} and Lemma \ref{lem:corr tree voltage},
$
	\nu^\star_{\tilde v} - \hat \nu^t_{\tilde v}
	= \hat \nu^t(p^\star)_{\tilde v} - \hat \nu^t(0)_{\tilde v}
	= \int_0^1 d\theta\, \frac{d}{d \theta} \bar\bbnu^\star(\theta p^\star)_{\tilde\vv}
	= \int_0^1 d\theta \sum_{e\in E, v\in \partial e}
	\frac{\partial}{\partial p_{e\rightarrow v}} \bar\bbnu^\star(\theta p^\star)_{\tilde\vv}
	\, p^\star_{e\rightarrow v}
	=
	- \sum_{\vv\in\VV^{t-1}} 
	\bar\LL^{-1}_{\tilde\vv\vv}
	\sum_{e\in E, v\in \partial e} (\yy_{e\rightarrow v})_\vv
	\, p^\star_{e\rightarrow v},
$
by the fundamental theorem of calculus and the chain rule of differentiation.
From Theorem \ref{thm: opt choice p voltage} we get
$
	p^\star_{e\rightarrow v}
	:= 
	- W_{vw} \nu^\star_w
$
for $e=\{v,w\}$,
and
$
	\sum_{e\in E, v\in \partial e} 
	(\yy_{e\rightarrow v})_\vv
	\, p^\star_{e\rightarrow v}
	= - \sum_{\ww\in\VV^{t}} 
	\WW_{\vv\ww} \nu^\star_{\sigma(\ww)}.
$
\end{proof}

Lemma \ref{lem:computationdevice} for the flow case holds also in the present case, upon referring the notation $\mathbf{P}_{\vv}$, $\mathbf{E}_{\vv}$, etc. to the Markov chain defined on the computation tree for the voltage case.

\subsubsection{Proofs of Results for $d$-Regular Graphs (Voltage Problem)}
We can now present the outline of the proofs for the results concerting the voltage problem in Lemma \ref{lem:ring} and Lemma \ref{lem:regulargraph} in Section \ref{sec:results}. Here we assume that the reader is already familiar with the proofs for the flow case.\\

\begin{proof}[Proof of Lemma \ref{lem:ring}, voltage problem]
Let $G=(V=\{0,\ldots,n-1\},E,W)$ be a weighted cycle.
Consider the setting in Section \ref{sec:comp tree voltage}. Fix $v\in V$, $t\ge 2$, and let $\TT$ be the computation tree of depth $t$ rooted at $\tilde v$, with root vertex $\tilde\vv$, i.e., $\sigma(\tilde\vv)=\tilde v$. Label the vertices of the computation tree as $\VV=\{0,1,\ldots,2t+2\}$, as in Figure \ref{fig:ring voltage}. Clearly, $\tilde\vv=t+1$, $\VV^{t-1}=\{1,2t+1\}$, and $\VV^t=\{0,2t+2\}$.

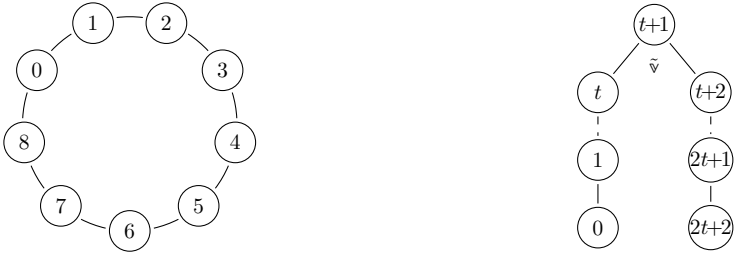
\begin{figure}[h!]
\begin{subfigure}{.45\textwidth}
\centering
\begin{tikzpicture}[scale=0.75,every node/.style={draw,circle,scale=0.75,solid,minimum size=0.72cm,inner sep=0pt}]
\def \n {9}
\def \radius {1.9cm}
\def \margin {13}

\foreach \s in {0,...,8}
{
  \node at ({360/\n * (\n-\s + 5 - 1)-10}:\radius) {$\s$};
  \draw[-, >=latex] ({360/\n * (\s - 1)+\margin-10}:\radius) 
    arc ({360/\n * (\s - 1)+\margin-10}:{360/\n * (\s)-\margin-10}:\radius);
}
\end{tikzpicture}
\end{subfigure}
\begin{subfigure}{.45\textwidth}
\centering
\begin{tikzpicture}
[scale=0.75,every node/.style={draw,circle,scale=0.75,solid,minimum size=0.72cm,inner sep=0pt},
,>=latex,shorten >=1.3pt,shorten <=1.3pt,
level distance=1.2cm, 
level 1/.style={sibling distance=2cm},
level 2/.style={sibling distance=1cm}]

\node (Root1) [label=below:$\tilde\vv$] {$t\!\!+\!\!1$}
child {
	node {$t$} edge from parent [-]
	child {
		node {$1$} edge from parent [-,dashed]
			child { node {$0$} edge from parent [-,solid]}
		}
	}
child {
	node {$t\!\!+\!\!2$} edge from parent [-]
	child {
		node {$2t\!\!+\!\!1$} edge from parent [-,dashed]
			child { node {$2t\!\!+\!\!2$} edge from parent [-,solid]}
		}
	};

\end{tikzpicture}
\end{subfigure}
\caption{Ring $G=(V,E)$ (left) and computation tree $\TT=(\VV,\EE)$ of depth $t$ with root vertex $\tilde\vv= t+1\in\VV$ (right).}
\label{fig:ring voltage}
\end{figure}

\noindent Given this set up for the computation tree, the remaining of the proof follows exactly the steps of the proof in the flow case. Using the analogue of Lemma \ref{lem:computationdevice} for the voltage case, we can compute the quantities $\bar \LL^{-1}_{\tilde\vv\vv}$, for each $\vv\in\VV^{t-1}$, by solving a system of equations like \eqref{system gambler cycle}. If we write $\WW_{s}$ to indicate $\WW_{s,s+1}$,
we find that
$
	\bar \LL^{-1}_{\tilde\vv 1}
	= \frac{1/\WW_{t+1} + \cdots + 1/\WW_{2t+1}}
	{\WW_{0}(1/\WW_{0} +\cdots+1/\WW_{2t+1})},
$
and, by symmetry,
$
	\bar \LL^{-1}_{\tilde\vv, 2t+1}
	= \frac{1/\WW_{0} + \cdots + 1/\WW_{t}}
	{\WW_{2t+1}(1/\WW_{0}+\cdots+1/\WW_{2t+1})}.
$
By Theorem \ref{thm:errorcharacterization voltage} we get
$
	\nu^\star_{\tilde v} - \hat \nu^t_{\tilde v}
	=
	\sum_{\vv\in\VV^{t-1}} 
	\bar\LL^{-1}_{\tilde\vv\vv}
	\sum_{\ww\in\VV^{t}} 
	\WW_{\vv\ww} \nu^\star_{\sigma(\ww)}=
	\frac{1/\WW_{t+1} + \cdots + 1/\WW_{2t+1}}
	{1/\WW_{0} +\cdots+1/\WW_{2t+1}}
	\nu^\star_{\sigma(0)}
	+
	\frac{1/\WW_{0} + \cdots + 1/\WW_{t}}
	{1/\WW_{0}+\cdots+1/\WW_{2t+1}}
	\nu^\star_{\sigma(2t+2)},
$
and the proof follows using $\WW_{s}=W_{\sigma(s),\sigma(s+1)}$ and that $\sigma:\VV \rightarrow V$ reads $\sigma(v)=\rho(s-t-1+\tilde v)$.
\end{proof}

\begin{proof}[Proof of Lemma \ref{lem:regulargraph}, voltage problem]
Let $G=(V,E,W)$ be a connected $d$-regular graph, with $d\ge 3$, where each edge has the same weight $\omega$. Consider the setting in Section \ref{sec:comp tree voltage}. Fix $\tilde w\in V$, $t\ge 3$, and let $\TT$ be the computation tree of depth $t$ rooted at $\tilde w$, with root vertex $\tilde \ww\in \VV$. Fix $\vv \in \VV^{t-1}$. For what follows, it is convenient to label some of the vertices in $\VV$. Let $\vv=0, 1=\mathcal{N}(\vv)\setminus \VV^t, 2=\mathcal{N}(1)\setminus \VV^{t-1},\ldots,k+1=\mathcal{N}(k)\setminus \VV^{t-k}$, until we reach $\tilde\ww=t$. See Figure \ref{fig:regulargraph voltage}. 
\begin{figure}[h!]

\centering

\begin{tikzpicture}
[scale=0.75,every node/.style={draw,circle,scale=0.75,solid,minimum size=0.72cm,inner sep=0pt},
,>=latex,shorten >=1.3pt,shorten <=1.3pt,
level distance=1.2cm, 
level 1/.style={sibling distance=2.9cm},
level 2/.style={sibling distance=1.45cm},
level 3/.style={sibling distance=0.72cm}]

\node[label=left:$\tilde\ww$] (Root1) {$t$}
child {
	node {$1$} edge from parent [-]
	child {
		node[label=left:$\vv$] {$0$} edge from parent [-]
			child { node{\phantom{0}} edge from parent [-]}
			child { node{\phantom{0}} edge from parent [-]}
		}
	child {
		node {\phantom{0}} edge from parent [-]
			child { node{\phantom{0}} edge from parent [-]}
			child { node{\phantom{0}} edge from parent [-]}
		}
	}
child {
	node {\phantom{0}} edge from parent [-]
	child {
		node {\phantom{0}} edge from parent [-]
			child { node{\phantom{0}} edge from parent [-]}
			child { node{\phantom{0}} edge from parent [-]}
		}
	child {
		node {\phantom{0}} edge from parent [-]
			child { node{\phantom{0}} edge from parent [-]}
			child { node{\phantom{0}} edge from parent [-]}
		}
	}
child {
	node {\phantom{0}} edge from parent [-]
	child {
		node {\phantom{0}} edge from parent [-]
			child { node{\phantom{0}} edge from parent [-]}
			child { node{\phantom{0}} edge from parent [-]}
		}
	child {
		node {\phantom{0}} edge from parent [-]
			child { node{\phantom{0}} edge from parent [-]}
			child { node{\phantom{0}} edge from parent [-]}
		}
	};
\end{tikzpicture}

\caption{Computation tree $\TT=(\VV,\EE)$ of depth $t=2$ associated to any given $d$-regular graph, with $d=3$.}
\label{fig:regulargraph voltage}
\end{figure}
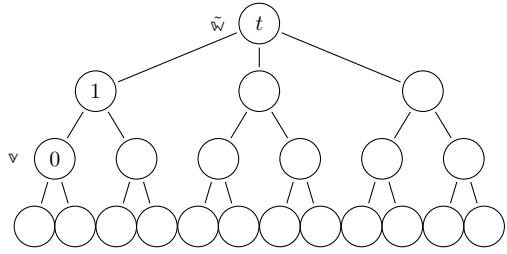
Given this set up for the computation tree, the remaining of the proof follows exactly the steps of the proof in the flow case.
Using the analogue of Lemma \ref{lem:computationdevice} for the voltage case, we can compute $\bar \LL^{-1}_{\tilde\ww\vv}$ by solving a system of equations of the form \eqref{eqn:sys} supported on the a reduced network that is exactly like the one in Figure \ref{fig:reducednetwork}. The only difference is that now $\CC_t = \omega (d-2)(1+h_{t+1})$, while all other effective conductances $\CC_s$, for $s\in\{0,\ldots,t-1\}$, read exactly as for the flow problem.
Rearranging \eqref{regular case general formula} we get
$
	\bar \LL^{-1}_{\tilde\ww \vv}
	= \frac{1}{\omega d (d-1)^{t-1}} \, \frac{q_t}{p_t} \, \frac{1}{\xi_{t-1}},
$
where $\xi_{t-1}:=\frac{1}{(d-1)^{t-1}}\frac{\alpha_{t+1}-p_0q_1\beta_{t}}{q_1\cdots q_{t-1}p_{t}}$ 
is defined as in the flow case by
$
	\xi_s = 
	\frac{p_{s}}{q_{s}p_{s+1}(d-1)} \xi_{s-1}
	- \frac{p_{s-1}p_{s}q_{s+1}}{q_{s-1}q_{s}p_{s+1}(d-1)^2} \xi_{s-2}
$
for $s\in \{2,\ldots,t-1\}$, with $\xi_0 := \frac{p_0q_1}{p_1}$ and $\xi_1 = \frac{1}{d-1}\frac{1-p_0q_1-p_1q_2}{q_1p_2}$. Plugging in the definitions of the transition probabilities, noticing that $p_t$ and $q_t$ differ from the analogous quantities defined in the flow case due to the new definition of $\CC_t$, recalling Proposition \ref{prop:constant d t}, it is immediate to verify that $\delta_s\equiv\xi_s$ for $s\in\{0,\ldots,t-2\}$ and $\frac{d-1}{(d-2)(1+h_{t+1})} b_{d,t}\equiv\xi_{t-1}$, so that
$
	\bar \LL^{-1}_{\tilde\ww \vv}
	= \frac{1}{\omega d (d-1)^{t}} \frac{1}{b_{d,t}}.
$
Theorem \ref{thm:errorcharacterization voltage} yields
\begin{align*}
	\nu^\star_{\tilde w} - \hat \nu^t_{\tilde w}
	&=
	\sum_{\vv\in\VV^{t-1}} 
	\bar\LL^{-1}_{\tilde\ww\vv}
	\sum_{\ww\in\VV^{t}} 
	\WW_{\vv\ww} \nu^\star_{\sigma(\ww)}
	=
	\frac{1}{d (d-1)^{t}} \frac{1}{b_{d,t}}
	\sum_{v\in V} \nu^\star_{v} \,\#^{t+1}_{\tilde w \rightarrow v}.
\end{align*}
where $\#^{t+1}_{\tilde w \rightarrow v}$ is the number of non-backtracking paths in the original undirected graph $G=(V,E)$ that connect node $\tilde w$ to node $v$ in $t+1$ time steps. The proof is easily concluded noticing that the total number of non-backtracking paths after $t+1$ steps is $d(d-1)^{t}$, so that $\vec{\mathbf{P}}_{\tilde w}(Y_{t+1}=v)=\#^{t+1}_{\tilde w \rightarrow v}/(d(d-1)^{t})$.
\end{proof}

\section*{Acknowledgments}
We would like to acknowledge support for this project from the National Science Foundation (NSF grant ECCS-1609484).

\appendix

\section{Inverse of Restricted Laplacians and Hitting Times}\label{sec:Laplacians and random walks}
In this appendix we establish a standalone general connection between the inverse of restricted Laplacian matrices---which are obtained by removing certain rows and columns from Laplacians matrices---and hitting times of random walks on graphs. This connection (Proposition \ref{prop:reducedlapandgreenfunction} below) is used repeatedly in Section \ref{sec:proofs} to prove the error characterization of the min-sum algorithm.

Let $G=(V,E,W)$ be a simple (i.e., no self-loops, and no multiple edges), connected, undirected, weighted graph, where to each edge $\{v,w\}\in E$ is associated a positive weight $W_{vw}=W_{wv}>0$, and $W_{vw}=0$ if $\{v,w\}\not\in E$. Let $D$ be a diagonal matrix with entries $d_v=D_{vv}=\sum_{w\in V} W_{vw}$ for each $v\in V$.
Let $L:=D-W$ be the Laplacian matrix for $G$.
Henceforth, for each $v\in V$, let $\mathbf{P}_{v}$ be the law of a time homogeneous Markov chain $X_{0},X_{1},X_2,\ldots$ on $V$ with transition matrix $P=D^{-1}W$ and initial condition $X_0=v$. Analogously, denote by $\mathbf{E}_{v}$ the expectation with respect to this law. The hitting time to a set $Z\subseteq V$ is defined as
$
	T_Z := \inf\{k\ge0 : X_k \in Z\}.
$
Let $Z\subseteq V$ be fixed, and define $\bar W$ and $\bar D$ as the matrix obtained by removing the rows and columns associated to $Z$ from $W$ and $D$, respectively. Let $\bar V:=V\setminus Z$, $\bar E:=E\setminus \{\{u,v\}\in E: u\in Z \text{ or } v\in Z\}$ and consider the graph $\bar G:=(\bar V,\bar E)$.
The quantity $\bar L:=\bar D-\bar W$ represents a restricted Laplacian---sometimes also called grounded Laplacian. Let $\bar P:=\bar D^{-1}\bar W$ be the transition matrix of the transient part of the killed random walk that is obtained from $X$ by adding cemeteries at the sites in $Z$. Creating cemeteries at the sites in $Z$ means modifying the walk $X$ so that each site in $Z$ becomes a recurrent state, i.e., once the walk is in state $\bar z\in Z$ it will go back to $\bar z$ with probably $1$. This is clearly done by replacing the $\bar z$-th row of $P$ by a row with zeros everywhere but in the $\bar z$-th coordinate, where the entry is equal to $1$.

The relation between the transition matrix $\bar P$ of the killed random walk and the law of the random walk $X$ itself is made explicit in the next proposition.

\begin{proposition}\label{prop:killedrv}
For any $v,w\in \bar V$, $k\ge 0$, we have
$
	\bar P^k_{vw}
	=
	\mathbf{P}_v(X_k=w,T_{Z}>k).
$
\end{proposition}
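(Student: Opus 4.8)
The plan is to establish the identity by induction on $k$, exploiting the single structural fact that $\bar P = \bar D^{-1}\bar W$ is exactly the submatrix of $P = D^{-1}W$ obtained by deleting the rows and columns indexed by $Z$; in particular $\bar P_{uw} = P_{uw}$ for all $u,w\in\bar V$, where $P_{uw}$ is the one-step transition probability of the (unkilled) chain $X$. For the base case $k=0$, the matrix $\bar P^0$ is the identity on $\bar V$, so $\bar P^0_{vw} = \mathbf{1}_{v=w}$; on the probabilistic side, since $v\in\bar V$ we have $X_0 = v\notin Z$, hence $T_Z>0$ holds automatically, and $\mathbf{P}_v(X_0=w,T_Z>0) = \mathbf{P}_v(X_0=w)=\mathbf{1}_{v=w}$, which matches.

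For the inductive step, suppose the claim holds for some $k\ge 0$. Writing the matrix product of $\bar P^{k}$ and $\bar P$ as a sum over the index set $\bar V$, applying the inductive hypothesis to $\bar P^k_{vu}$, and using $\bar P_{uw}=P_{uw}$, I obtain
$$
	\bar P^{k+1}_{vw}
	= \sum_{u\in\bar V} \bar P^k_{vu}\,\bar P_{uw}
	= \sum_{u\in\bar V} \mathbf{P}_v(X_k=u,\,T_Z>k)\,P_{uw}.
$$
It then remains to identify this sum with $\mathbf{P}_v(X_{k+1}=w,\,T_Z>k+1)$.

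The key observation is that, because the target vertex $w$ lies in $\bar V = V\setminus Z$, on the event $\{X_{k+1}=w\}$ we automatically have $X_{k+1}\notin Z$, so the two events $\{X_{k+1}=w,\,T_Z>k+1\}$ and $\{X_{k+1}=w,\,T_Z>k\}$ coincide. Decomposing the latter according to the value of $X_k$, which must itself lie in $\bar V$ on the event $\{T_Z>k\}$, gives $\mathbf{P}_v(X_{k+1}=w,\,T_Z>k) = \sum_{u\in\bar V}\mathbf{P}_v(X_{k+1}=w,\,X_k=u,\,T_Z>k)$; since the event $\{X_k=u,\,T_Z>k\}$ is measurable with respect to $X_0,\dots,X_k$, the Markov property factors each term as $\mathbf{P}_v(X_k=u,\,T_Z>k)\,P_{uw}$, reproducing exactly the sum displayed above and closing the induction. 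The only step requiring genuine care is this equivalence of the two hitting-time events on $\{X_{k+1}=w\}$: it is precisely what guarantees that enforcing $T_Z>k$ inside the sum suffices to enforce $T_Z>k+1$ after the final transition, so that the killed chain faithfully tracks the conditioned chain; the remainder is a routine application of the Markov property and time-homogeneity.
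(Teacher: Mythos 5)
Your proof is correct and takes essentially the same route as the paper's: induction on $k$, with the inductive step resting on a one-step decomposition over the value of $X_k$, the Markov property, and the observation that $w\in\bar V$ makes the constraint $X_{k+1}\notin Z$ automatic (the paper encodes this same fact via $\mathbf{P}_{X_k}(X_1=w,\,X_1\notin Z)=\bar P_{X_k w}$, together with the identity $\bar P_{uw}=P_{uw}$ for $u,w\in\bar V$ that you state explicitly). The only difference is the direction of travel --- you expand the matrix power $\bar P^{k+1}$ and match it to the probability, while the paper starts from $\mathbf{P}_v(X_{k+1}=w,\,T_Z>k+1)$ and conditions on $X_0,\ldots,X_k$ --- which is cosmetic rather than substantive.
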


\begin{proof}
Proof is by induction. Clearly, for any $v,w\in \bar V$, we have
$
	\mathbf{P}_v(X_0=w,T_{Z}>0) = \mathbf{P}_v(X_0=w)
	= \mathbf{1}_{v=w} = \bar P^0_{vw},
$
which proves the statement for $k=0$ ($\mathbf{1}_S$ is the indicator function defined as $\mathbf{1}_S = 1$ if statement $S$ is true, $\mathbf{1}_S = 0$ otherwise).
Assume that the statement holds for any time $i\ge 0$ up to $k>0$. By the properties of conditional expectation, as $\{T_{Z} >k+1\}=\{X_0\not\in Z,\ldots,X_{k+1}\not\in Z\}$, we have
$
	\mathbf{P}_v(X_{k+1}=w,T_{Z}>k+1)=
	\mathbf{E}_v [\mathbf{P}_v(X_{k+1}=w,T_{Z}>k+1 | X_0,\ldots,X_k)]=
	\mathbf{E}_v [\mathbf{1}_{\{X_0\not\in Z,\ldots,X_k\not\in Z\}}\mathbf{P}_v(X_{k+1}=w,X_{k+1}\not\in Z | X_0,\ldots,X_k)]
$
for any $v,w\in \bar V$. By the Markov property, on the event $\{X_k\not\in Z\}$, we have
$
	\mathbf{P}_v(X_{k+1}=w,X_{k+1}\not\in Z | X_0,\ldots,X_k)
	= \mathbf{P}_{X_k}(X_{1}=w,X_{1}\not\in Z)
	= \bar P_{X_kw},
$
so that by the induction hypothesis we have
$
	\mathbf{P}_v(X_{k+1}=w,T_{Z}>k+1)
	= \mathbf{E}_v [\mathbf{1}_{\{T_{Z} >k\}} \bar P_{X_kw}]
	= \sum_{u\in V\setminus Z} 
	\mathbf{P}_v(X_k=u,T_{Z}>k) \bar P_{uw}
	= \bar P^{k+1}_{vw},
$
which proves the statement for $k+1$.
\end{proof}

We can now relate the inverse of the reduced Laplacian $\bar L$ with the hitting times of the original random walk $X$. The following result represents the main computational tool used in Section \ref{sec:proofs} to establish decay of correlation in the computation tree that supports the operations of the min-sum algorithm over time.

\begin{proposition}\label{prop:reducedlapandgreenfunction}
For $v,w\in \bar V$, 
$
	\bar L^{-1}_{vw} 
	\!=\! \bar L^{-1}_{ww}\mathbf{P}_v(T_w<T_{Z}),$
and
$
	\bar L^{-1}_{ww}
	\!=\! \frac{1}{d_w} \mathbf{E}_w\bigg[\sum_{k=0}^{T_{Z}} \mathbf{1}_{X_k=w} \bigg].
$
\end{proposition}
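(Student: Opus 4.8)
The plan is to identify $\bar L^{-1}$ with the Green's function (expected number of visits) of the killed random walk, and then read off both identities from it. First I would factor the restricted Laplacian as $\bar L = \bar D - \bar W = \bar D(I - \bar D^{-1}\bar W) = \bar D(I-\bar P)$. Since $G$ is connected and $Z\neq\varnothing$, the killed walk is absorbed in $Z$ almost surely, so its transient part has spectral radius strictly less than one; hence $I-\bar P$ is invertible with convergent Neumann series $(I-\bar P)^{-1}=\sum_{k\ge 0}\bar P^k$, and
$$\bar L^{-1} = (I-\bar P)^{-1}\bar D^{-1} = \Big(\sum_{k\ge 0}\bar P^k\Big)\bar D^{-1}.$$
As $\bar D^{-1}$ is diagonal, this reads entrywise as $\bar L^{-1}_{vw} = \frac{1}{d_w}\sum_{k\ge 0}(\bar P^k)_{vw}$.

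Next I would invoke Proposition \ref{prop:killedrv} to write $(\bar P^k)_{vw}=\mathbf{P}_v(X_k=w,T_Z>k)$, so that by monotone convergence (all terms are nonnegative)
$$\sum_{k\ge 0}(\bar P^k)_{vw} = \mathbf{E}_v\Big[\sum_{k\ge 0}\mathbf{1}_{X_k=w,\,T_Z>k}\Big].$$
The key bookkeeping step is to observe that, since $w\in\bar V$ (hence $w\notin Z$), for $k\le T_Z-1$ the constraint $T_Z>k$ is automatic on $\{X_k=w\}$, while the term $k=T_Z$ contributes zero because $X_{T_Z}\in Z\neq w$. Thus $\sum_{k\ge 0}\mathbf{1}_{X_k=w,T_Z>k}=\sum_{k=0}^{T_Z}\mathbf{1}_{X_k=w}$, giving the Green's-function identity $\bar L^{-1}_{vw}=\frac{1}{d_w}\mathbf{E}_v[\sum_{k=0}^{T_Z}\mathbf{1}_{X_k=w}]$. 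Specializing to $v=w$ yields the second formula immediately.

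For the first formula I would apply the strong Markov property at the hitting time $T_w$. Since $w\notin Z$, the walk can record a visit to $w$ before $T_Z$ only on the event $\{T_w<T_Z\}$, on which it sits at $w$ at time $T_w$ and contributes no visits at times $k<T_w$; hence
$$\mathbf{E}_v\Big[\sum_{k=0}^{T_Z}\mathbf{1}_{X_k=w}\Big] = \mathbf{E}_v\Big[\mathbf{1}_{T_w<T_Z}\,\mathbf{E}_{X_{T_w}}\big[\textstyle\sum_{k=0}^{T_Z}\mathbf{1}_{X_k=w}\big]\Big] = \mathbf{P}_v(T_w<T_Z)\,\mathbf{E}_w\Big[\sum_{k=0}^{T_Z}\mathbf{1}_{X_k=w}\Big].$$
Dividing by $d_w$ and reusing the Green's-function identity turns this into $\bar L^{-1}_{vw}=\bar L^{-1}_{ww}\,\mathbf{P}_v(T_w<T_Z)$.

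The main obstacle is not a hard estimate but the careful probabilistic bookkeeping: justifying the interchange of expectation and the infinite sum (monotone convergence), matching the infinite sum $\sum_k\mathbf{1}_{X_k=w,T_Z>k}$ with the finite sum $\sum_{k=0}^{T_Z}$ via the observation $w\notin Z$, and invoking the strong Markov property at $T_w$ cleanly on $\{T_w<T_Z\}$. Convergence of the Neumann series in turn rests on genuine transience of the killed walk, which follows from connectedness of $G$ together with $Z\neq\varnothing$.
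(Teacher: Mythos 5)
Your proposal is correct and follows essentially the same route as the paper's proof: the Neumann expansion $\bar L^{-1}=\sum_{k\ge 0}\bar P^k\bar D^{-1}$, Proposition \ref{prop:killedrv} together with monotone convergence to identify the entries with the Green's function $\mathbf{E}_v[\sum_{k=0}^{T_Z}\mathbf{1}_{X_k=w}]$, and the strong Markov property to factor out $\mathbf{P}_v(T_w<T_Z)$. The only deviations are cosmetic: you apply the strong Markov property at $T_w$ restricted to $\{T_w<T_Z\}$ where the paper stops at $S=T_w\wedge T_Z$, and your probabilistic justification that $\rho(\bar P)<1$ (almost-sure absorption in $Z$, so $\bar P^k\to 0$) uses only connectedness of $G$ and thus quietly subsumes the paper's separate block-structure argument for disconnected $\bar G$.
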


\begin{proof}
Let us first assume that $\bar G$ is connected. The matrix $\bar P$ is clearly sub-stochastic. Then $\bar P$ is irreducible (in the sense of Markov chains, i.e., for each $v,w\in \bar V$ there exists $t$ to that $\bar P^k_{vw}\neq 0$) and the spectral radius of $\bar P$ is strictly less than $1$ (see Corollary 6.2.28 in \cite{Horn:1985:MA:5509}, for instance), so that the Neumann series $\sum_{k=0}^\infty \bar P^k$ converges.
The Neumann series expansion for $\bar L^{-1}$ yields
$
	\bar L^{-1}
	= \sum_{k=0}^\infty (I-\bar D^{-1} \bar L)^k \bar D^{-1}
	= \sum_{k=0}^\infty \bar P^k \bar D^{-1},
$
or, $\bar L^{-1}_{vw} = \frac{1}{d_w} \sum_{k=0}^\infty \bar P^k_{vw}$. As $\bar P^k_{vw}=\mathbf{P}_v(X_k=w,T_{Z}>k)$ by Proposition \ref{prop:killedrv}, by the Monotone convergence theorem we can take the summation inside the expectation and get
$
	\sum_{k=0}^\infty \bar P^k_{vw} 
	= \sum_{k=0}^\infty \mathbf{E}_v[\mathbf{1}_{X_k=w}\mathbf{1}_{T_{Z}>k}]
	= \mathbf{E}_v
	[\sum_{k=0}^{T_{Z}-1} \mathbf{1}_{X_k=w}],
$
which equals $\mathbf{E}_v [\sum_{k=0}^{T_{Z}} \mathbf{1}_{X_k=w}]$ as $X_{T_{Z}}\in Z$, and $w\not\in Z$. Recall that if $S$ is a stopping time for the Markov chain $X:=X_0,X_1,X_2,\ldots$, then by the strong Markov property we know that, conditionally on $\{S<\infty\}$ and $\{X_S=w\}$, the chain $X_S,X_{S+1},X_{S+2},\ldots$ has the same law as the Markov chain $Y:=Y_{0},Y_1,Y_2,\ldots$ with transition matrix $P$ and initial condition $Y_0=w$, and $Y$ is independent of $X_0,\ldots,X_S$. The hitting times $T_w$ and $T_{Z}$ are two stopping times for $X$, and so is their minimum $S:=T_w\wedge T_{Z}$. As either $X_S=w$ or $X_S\in Z$, we have
$
	\mathbf{E}_v[\sum_{k=0}^{T_{Z}} \mathbf{1}_{X_k=w}]
	= \mathbf{E}_v[\sum_{k=0}^{T_{Z}} \mathbf{1}_{X_k=w} \vert X_S=w]
	\mathbf{P}_v(X_S=w),
$
where we used that, conditionally on $\{X_S\in Z\}=\{T_w >k_{Z}\}$, clearly $\sum_{k=0}^{T_{Z}} \mathbf{1}_{X_k=w} = 0$. Conditionally on $\{X_S=w\}=\{T_w < T_{Z}\}=\{S=T_w\}$, we have $T_{Z}=S + \inf\{k\ge 0:X_{S+k}\in Z\}$, and the strong Markov property yields (note that the event $\{S<\infty\}$ has probability one from any starting point, as the graph $G$ is connected by assumption so that the Markov chain will almost surely eventually hit either $w$ or a site in $Z$)
$
	\mathbf{E}_v[\sum_{k=0}^{T_{Z}} \mathbf{1}_{X_k=w} \vert X_S=w]
	=
	\mathbf{E}_v[\sum_{k=0}^{\inf\{k\ge 0:X_{S+k}\in Z\}} 
	\mathbf{1}_{X_{S+k}=w} \vert X_S=w]
	= 
	\mathbf{E}_w[\sum_{k=0}^{T_{Z}} \mathbf{1}_{X_k=w}].
$
Putting everything together we have
$
	\bar L^{-1}_{vw}
	= \frac{1}{d_w} \mathbf{E}_w[\sum_{k=0}^{T_{Z}} \mathbf{1}_{X_k=w} ]
	\mathbf{P}_v(T_w<T_{Z}).
$
As $\mathbf{P}_w(T_w<T_{Z})=1$, clearly
$
	\bar L^{-1}_{ww}
	= \frac{1}{d_w} \mathbf{E}_w[\sum_{k=0}^{T_{Z}} \mathbf{1}_{X_k=w} ]
$
so that
$
	\bar L^{-1}_{vw}
	= \bar L^{-1}_{ww}
	\mathbf{P}_v(T_w<T_{Z}).
$
The argument just presented extends easily to the case when $\bar G$ is not connected. In this case the matrix $\bar P$ has a block structure, where each block corresponds to a connected component and to a sub-stochastic submatrix, so the argument above can be applied to each block.
\end{proof}

\section{Technical Results}\label{app:technical}

This appendix contains technical results that are used in the paper.
The following result on quadratic optimization has been used repeatedly in the main text (see Section \ref{sec:Voltage and flow problem} and the proofs of Proposition \ref{prop:Min-Sum updates} and Lemma \ref{lem:corr tree}).

\begin{proposition}\label{prop:quadratic}
Given $R\in \R^{m\times m}$ symmetric positive definite, $r\in \R^m$, $A\in\R^{n\times m}$, $b\in\R^{n}$ in the range of $A$, the unique solution of the optimization problem
\begin{align*}
\begin{aligned}
	\text{minimize }\quad   & \frac{1}{2} x^T R x + r^T x\\
	\text{subject to }\quad & Ax = b
\end{aligned}
\end{align*}
over $x\in\R^m$ reads
$
	x^\star = R^{-1} A^T L^+ b + (R^{-1}A^TL^+ A-I) R^{-1} r
$
with $L:= A R^{-1} A^T$.
\end{proposition}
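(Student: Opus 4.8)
The plan is to characterize the minimizer through the first-order (KKT) conditions and then solve the resulting linear system using the pseudoinverse. Since $R$ is symmetric positive definite, the objective $\frac{1}{2}x^T R x + h^T x$ is strictly convex, and since $b$ lies in the range of $A$ the feasible set $\{x : Ax = b\}$ is a nonempty affine subspace; hence the problem has a unique minimizer $x^\star$. Because the constraints are affine, the Lagrange multiplier theorem guarantees a vector $\nu$ with $\nabla_x \mathcal{L}(x^\star,\nu) = 0$, where $\mathcal{L}(x,\nu) = \frac{1}{2}x^T R x + h^T x + \nu^T(b - Ax)$. Stationarity reads $Rx^\star + h - A^T\nu = 0$, which I invert (using that $R$ is nonsingular) to obtain
\begin{align*}
x^\star = R^{-1}(A^T\nu - h).
\end{align*}

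Next I would impose feasibility. Substituting this expression into $Ax^\star = b$ gives $A R^{-1} A^T \nu - A R^{-1} h = b$, i.e.,
\begin{align*}
L\nu = b + A R^{-1} h,
\qquad L := A R^{-1} A^T.
\end{align*}
The key structural fact I would establish here is that $\operatorname{range}(L) = \operatorname{range}(A)$ and $\ker(L) = \ker(A^T)$; both follow from $R^{-1}$ being positive definite, since $Lx = 0$ forces $x^T L x = \|R^{-1/2}A^T x\|^2 = 0$ and hence $A^T x = 0$. Consequently the right-hand side $b + A R^{-1} h$ lies in $\operatorname{range}(A) = \operatorname{range}(L)$ (using the hypothesis $b\in\operatorname{range}(A)$), so the system is consistent and its solutions are $\nu = L^+(b + A R^{-1} h)$ plus an arbitrary element of $\ker(L) = \ker(A^T)$.

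To finish I would substitute $\nu$ back into $x^\star = R^{-1}(A^T\nu - h)$. The crucial observation is that the ambiguity in $\nu$ is harmless: any component of $\nu$ in $\ker(A^T)$ is annihilated by the factor $A^T$, so $A^T\nu = A^T L^+(b + A R^{-1}h)$ is well-defined regardless of the choice of multiplier. Expanding,
\begin{align*}
x^\star
= R^{-1}A^T L^+ b + R^{-1}A^T L^+ A R^{-1} h - R^{-1}h
= R^{-1}A^T L^+ b + (R^{-1}A^T L^+ A - I)R^{-1}h,
\end{align*}
which is the claimed formula. The main obstacle is the potential rank-deficiency of $A$: when $A$ does not have full row rank, $L$ is singular and the multiplier $\nu$ is not unique, so the argument must carefully justify both the consistency of the system $L\nu = b + AR^{-1}h$ (via the range identity together with the hypothesis on $b$) and the well-definedness of the final expression (via the kernel identity). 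Verifying the two identities $\operatorname{range}(L)=\operatorname{range}(A)$ and $\ker(L)=\ker(A^T)$ is thus the technical heart of the proof.
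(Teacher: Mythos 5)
Your proof is correct and follows essentially the same route as the paper's: write the KKT system, solve the stationarity equation for $x$, substitute into $Ax=b$ to get $L\nu = b + AR^{-1}h$, take $\nu = L^+(b+AR^{-1}h)$, and back-substitute. The only difference is that you carefully justify the steps the paper leaves implicit --- consistency of the multiplier system via $\operatorname{range}(L)=\operatorname{range}(A)$ and well-definedness of $A^T\nu$ via $\ker(L)=\ker(A^T)$ --- which is a welcome tightening rather than a different argument.
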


\begin{proof}
The KKT optimality equations for this problem read
$$
	\left( \begin{array}{cc}
	R & -A^T \\
	A & 0
	\end{array} \right)
	\left( \begin{array}{c}
	x\\
	\nu
	\end{array} \right)
	=
	\left( \begin{array}{c}
	-r\\
	b
	\end{array} \right).
$$
Solving the first equation for $x$ we have $x=R^{-1}(A^T\nu - r)$, which, upon substitution into the second equation $Ax=b$ yields a solution $\nu=L^+(AR^{-1}r+b)$, and the proof follows.
\end{proof}

We now present the proof of Proposition \ref{prop:constant d t} in Section \ref{sec:results}.\\

\begin{proof}[Proof of Proposition \ref{prop:constant d t}]
Fix $d\ge 3$, $t\ge 3$. As $s\rightarrow h_s$ monotonically converges to $0$ exponentially fast, we expect that the function $s\rightarrow\delta_s$, hence the values of both $b_{d,t}$ and $c_{d,t}$, can be controlled by the system that is obtained by replacing $h_s$ with $0$ for any $s\ge 0$. Thus, we define $\hat\delta_0 := \delta_0$, $\hat\delta_1 := \delta_1$, and 
$
	\hat\delta_s := 
	\frac{1}{d-1}(2+\frac{(d-2)^2}{d-1}) \hat\delta_{s-1} - \frac{1}{(d-1)^2}\hat\delta_{s-2},
	s\ge 2.
$
The characteristic equation of the dynamical system $\hat\delta_s$, $s\ge 0$, reads $(d-1)^2\lambda^{2}-(d^2-2d+2)\lambda + 1 = 0$. This equation has two distinct real solutions: $\lambda_+=1$ and $\lambda_-=1/(d-1)^2 < 1$. Imposing the initial conditions for $\hat\delta_0$ and $\hat\delta_1$, we find
$
	\hat\delta_s = c_+ + c_- \lambda_-^s \ge 0
$
with $c_+=\frac{d(d-1)((d-1)^2+1)-1}{d((d-1)^2-1)}>0$ and $c_-=1/d-c_+<0$. 

We begin by analyzing $c_{d,t}$. To this end, define
$
	\hat c_{d,t} := 
	\frac{1}{d-1}(1+\frac{1}{(d-2)(1+h_{t})}) \hat \delta_{t-2}
	- \frac{1}{(d-1)^2} \hat \delta_{t-3}.
$
First, we prove that $c_{d,t}\ge 1$. 
By induction it is easy to show that $(d-1)(\delta_s-\hat\delta_s)\ge\delta_{s-1}-\hat\delta_{s-1}\ge 0$ for any $s\ge 1$, so we have $c_{d,t}\ge \hat c_{d,t}$.
As $\hat\delta_s \ge c_+ + c_{-} \lambda_{-}$ for any $s\ge 1$ and $\hat\delta_s \le c_+$ for any $s\ge 0$, we get $\hat c_{d,t}\ge \frac{1}{d-1}(1+\frac{1}{(d-2)(1+h_{1})} ) (c_+ + c_- \lambda_-) - \frac{1}{(d-1)^2}c_+ \ge 1$.
Second, we prove that $c_{d,t}\le 1+\varepsilon_d$.
Given $\alpha := 1+\frac{1}{(d-1)^2}$, we first show that $(d-1)(\alpha\hat\delta_s-\delta_s) \ge \alpha\hat\delta_{s-1}-\delta_{s-1}\ge 0$ for any $s\ge 1$, so that $c_{d,t}\le \alpha \hat c_{d,t}$. Define $\xi_s:=\alpha\hat\delta_{s}-\delta_{s}$.
We proceed by induction. Note that $(d-1) \xi_1 \ge \xi_0 \ge 0$.
For a given $s\ge 3$, assume that $(d-1) \xi_{r} \ge \xi_{r-1}$ for all $r\le s-1$. 
We will prove that $(d-1)\xi_{s} \ge \xi_{s-1}$. 
We have
$
	(d-1)\xi_s - \xi_{s-1} = 
	\frac{(d-1)\xi_{s-1} - \xi_{s-2}}{d-1}
	+ \frac{(d-2)^2}{d-1} ((1+h_{s+2})\xi_{s-1} - \alpha h_{s+2} \hat\delta_{s-1}).
$
As $\hat\delta_{s-1}\le c_+$ and $(d-1)^{s-1} h_{s+2} \le h_3$, with $c_+ h_3 < 1$, by the induction hypothesis we have $\xi_{s-1}\ge \frac{\xi_1}{(d-1)^{s-2}}= \frac{(\alpha-1)\delta_1}{(d-1)^{s-2}}$ and
$
	(d-1)\xi_s - \xi_{s-1}
	\ge
	\frac{(d-2)^2}{(d-1)^{s}} ((d-1)(\alpha-1)\delta_1 - \alpha) = 0,
$
which proves the induction step.
As $\frac{1}{d} = c_+ + c_{-} \le \hat\delta_s \le c_+$, we get
$
	c_{d,t} \le \alpha\hat c_{d,t} 
	\le \bar\alpha := \alpha (\frac{c_+}{d-1}(1+\frac{1}{d-2}) 
	- \frac{1}{d(d-1)^2})
	).
$
The function $d\ge 3\rightarrow \varepsilon_d := \bar\alpha - 1$ is decreasing, with $\varepsilon_d\rightarrow 0$ as $d\rightarrow \infty$, and $\varepsilon_3<3$.

By induction it is immediate to show that $(d-1)\delta_s\ge\delta_{s-1}\ge 0$ for any $s\ge 1$, so that a direct comparison yields $b_{d,t} \le c_{d,t}$. Finally, we prove that $b_{d,t}\ge \frac{d-2}{d-1}$. Define
$
	\hat b_{d,t} := 
	\frac{1}{(d-1)^2}(1+(d-2)(1+h_{t+1})) \hat\delta_{t-2}
	- \frac{(d-2)(1+h_{t+1})}{(d-1)^3} \hat\delta_{t-3}.
$
As $(d-1)(\delta_s-\hat\delta_s)\ge\delta_{s-1}-\hat\delta_{s-1}\ge 0$ for $s\ge 1$, we have $b_{d,t}\ge \hat b_{d,t}$.
As $\hat\delta_s \ge \hat\delta_3 = c_+ + c_{-} \lambda_{-}^3$ for $s\ge 3$ and $\hat\delta_s \le c_+$ for $s\ge 0$, by the monotonicity of $h_s$ we get 
$
	\hat b_{d,t} 
	\ge 
	\frac{1}{(d-1)^2}(1+(d-2)
	) 
	(c_+ + c_- \lambda_-^3)
	- \frac{(d-2)
	(1+h_{1})
	}{(d-1)^3} c_+
	\ge \frac{d-2}{d-1}.
$
\end{proof}

The next proposition yields a recursive formula to compute, in the case of $d$-regular graphs, the probability distributions $P^{(t)}_v$ and $P^{(t,w)}_v$ (interpreted as row vectors) given by the $v$-th row of the matrices $P^{(t)}$ and $P^{(t,w)}$ defined in \eqref{prob distrib voltage flow}, respectively. For any $v \in V$, let $\mathbf{1}_v\in\R^V$ be defined as $(\mathbf{1}_v)_z := 1$ if $v=z$, $(\mathbf{1}_v)_z := 0$ otherwise.

\begin{proposition}\label{proof:Non-backtracking random walks}
Let $(V, E)$ be a connected $d$-regular graph with $d\ge 2$, and let $B\in\mathbb{R}^{V\times V}$ be its adjacency matrix, that is, the symmetric matrix defined by $B_{vw}:=1$ if $\{v,w\}\in E$, $B_{vw}:=0$ otherwise. For any $v\in V$ we have
\begin{align*}
	P^{(1)}_{v} &= \frac{\mathbf{1}_v^TB}{d},\quad
	P^{(2)}_{v} = \frac{P^{(1)}_{v}B - \mathbf{1}_v^T}{d-1},\quad
	P^{(t)}_v = \frac{P^{(t-1)}_vB - P^{(t-2)}_v}{d-1}\ t\ge 3,
\end{align*}
Given $w\in V$ such that $\{v,w\}\in E$, we have that $P^{(t,w)}_{v}$ satisfies the same recursion as above with initial condition $P^{(1,w)}_{v} = (\mathbf{1}_v^TB - \mathbf{1}_w^T)/(d-1)$.
\end{proposition}

\begin{proof}
We first prove the recursion for $P^{(t,w)}_{v}$. Fix $\{v,w\}\in E$, and let $\widetilde\#^{t}_{z}$ be the number of non-backtracking paths that connect node $v$ to node $z$ in $t$ steps, where the first step is different from $w$.
By definition, $B_{vz}$ equals the number of paths that connect node $v$ to node $z$ in a single step. Clearly, $\widetilde\#^{1}_{z} = B_{vz} - (\mathbf{1}_w)_z$, i.e., in row-vector form, $\widetilde\#^{1} = \mathbf{1}_v^T B - \mathbf{1}_w^T$. It is immediate to verify that $\widetilde\#^{2}= \widetilde\#^{1}B - (d-1) \mathbf{1}_v^T$, as there are $d-1$ paths that can backtrack to $v$ at time $t=2$. For $t\ge 3$ we have $\widetilde\#^{t}= \widetilde\#^{t-1}B - (d-1) \widetilde\#^{t-2}$. In fact, $(\widetilde\#^{t-1}B)_z$ equals the number of paths that go from $v$ to $z$ in $t$ steps such that the first move is different from $w$ and the first $t-1$ steps are non-backtracking. To count only the paths that are non-backtracking for all $t$ steps, we need to subtract from $(\widetilde\#^{t-1}B)_z$ the number of paths that backtrack at the $t$-th step, which are the ones that are at $z$ both at step $t-2$ ant $t$. There are $(d-1) \widetilde\#^{t-2}_z$ of these paths, as vertex $z$ has $d$ neighbors but only $d-1$ moves are allowed at the $(t-1)$-th step due to the non-backtracking nature of the paths up to step $t-1$. The proof follows noticing that the total number of non-backtracking after $t$ steps is $d(d-1)^{t-1}$, so that $\vec{\mathbf{P}}_v(Y_1\neq w,Y_t=z)=\widetilde\#^{t}_z/(d(d-1)^{t-1})$ and $P^{(t,w)}_{vz} = \vec{\mathbf{P}}_v(Y_t=z|Y_1\neq w) = \vec{\mathbf{P}}_v(Y_1\neq w,Y_t=z) / \vec{\mathbf{P}}_v(Y_1\neq w) = \widetilde\#^{t}_z / (d-1)^t$, $\vec{\mathbf{P}}_v(Y_1\neq w)=(d-1)/d$.
The recursion for $P^{(t)}_{v}$ is proved analogously. Let $\#^{t}_{z}$ be the number of non-backtracking paths that connect node $v$ to node $z$ in $t$ steps. Clearly, $\#^{1} = \mathbf{1}_v^TB$, $\#^{2}= \#^{1}B - d \mathbf{1}_v^T$, and $\#^{t}= \#^{t-1}B - (d-1) \#^{t-2}$. The proof follows as $P^{(t)}_{vz} = \vec{\mathbf{P}}_v(Y_t=z) = \#^{t}_z / (d(d-1)^{t-1})$.
\end{proof}

\bibliography{bib}

\begin{thebibliography}{31}
\providecommand{\natexlab}[1]{#1}
\providecommand{\url}[1]{\texttt{#1}}
\expandafter\ifx\csname urlstyle\endcsname\relax
  \providecommand{\doi}[1]{doi: #1}\else
  \providecommand{\doi}{doi: \begingroup \urlstyle{rm}\Url}\fi

\bibitem[Abbe(2018)]{CIT-067}
Emmanuel Abbe.
\newblock Community detection and stochastic block models.
\newblock \emph{Foundations and Trends{\textregistered} in Communications and
  Information Theory}, 14\penalty0 (1-2):\penalty0 1--162, 2018.

\bibitem[Barbier et~al.(2018)Barbier, Krzakala, Macris, Miolane, and
  Zdeborov{\'{a}}]{OptAMP18}
Jean Barbier, Florent Krzakala, Nicolas Macris, L{\'{e}}o Miolane, and Lenka
  Zdeborov{\'{a}}.
\newblock Optimal errors and phase transitions in high-dimensional generalized
  linear models.
\newblock In \emph{{COLT}}, volume~75 of \emph{Proceedings of Machine Learning
  Research}, pages 728--731. {PMLR}, 2018.

\bibitem[Berrou et~al.(1993)Berrou, Glavieux, and Thitimajshima]{BGT93}
C.~Berrou, A.~Glavieux, and P.~Thitimajshima.
\newblock Near shannon limit error-correcting coding and decoding: Turbo-codes.
  1.
\newblock In \emph{Communications, 1993. ICC '93 Geneva. Technical Program,
  Conference Record, IEEE International Conference on}, volume~2, pages
  1064--1070 vol.2, 1993.

\bibitem[Braunstein et~al.(2005)Braunstein, M{\'e}zard, and Zecchina]{BMZ05}
A.~Braunstein, M.~M{\'e}zard, and R.~Zecchina.
\newblock Survey propagation: An algorithm for satisfiability.
\newblock \emph{Random Struct. Algorithms}, 27\penalty0 (2):\penalty0 201--226,
  2005.

\bibitem[Cohen et~al.(2014)Cohen, Kyng, Miller, Pachocki, Peng, Rao, and
  Xu]{Cohen:2014:SSL:2591796.2591833}
Michael~B. Cohen, Rasmus Kyng, Gary~L. Miller, Jakub~W. Pachocki, Richard Peng,
  Anup~B. Rao, and Shen~Chen Xu.
\newblock Solving sdd linear systems in nearly mlog1/2n time.
\newblock In \emph{Proceedings of the 46th Annual ACM Symposium on Theory of
  Computing}, STOC '14, pages 343--352. ACM, 2014.

\bibitem[Donoho et~al.(2009)Donoho, Maleki, and Montanari]{Donoho18914}
David~L. Donoho, Arian Maleki, and Andrea Montanari.
\newblock Message-passing algorithms for compressed sensing.
\newblock \emph{Proceedings of the National Academy of Sciences}, 106\penalty0
  (45):\penalty0 18914--18919, 2009.

\bibitem[Gallager(1963)]{gallager1963}
R.G. Gallager.
\newblock \emph{Low-density Parity-check Codes}.
\newblock M.I.T. Press research monographs. M.I.T. Press, 1963.

\bibitem[Gamarnik et~al.(2012)Gamarnik, Shah, and Wei]{GSW12}
David Gamarnik, Devavrat Shah, and Yehua Wei.
\newblock Belief propagation for min-cost network flow: Convergence and
  correctness.
\newblock \emph{Operations Research}, 60\penalty0 (2):\penalty0 410--428, 2012.

\bibitem[Georgii(2011)]{Geo11}
Hans-Otto Georgii.
\newblock \emph{Gibbs measures and phase transitions}, volume~9 of \emph{de
  Gruyter Studies in Mathematics}.
\newblock Walter de Gruyter \& Co., Berlin, second edition, 2011.

\bibitem[Horn and Johnson(1986)]{Horn:1985:MA:5509}
Roger~A. Horn and Charles~R. Johnson, editors.
\newblock \emph{Matrix Analysis}.
\newblock Cambridge University Press, New York, NY, USA, 1986.

\bibitem[Kelner et~al.(2013)Kelner, Orecchia, Sidford, and Zhu]{KOSZ13}
Jonathan~A. Kelner, Lorenzo Orecchia, Aaron Sidford, and Zeyuan~Allen Zhu.
\newblock A simple, combinatorial algorithm for solving sdd systems in
  nearly-linear time.
\newblock \emph{CoRR}, abs/1301.6628, 2013.

\bibitem[Koutis et~al.(2011)Koutis, Miller, and Peng]{KMP11}
I.~Koutis, G.~L. Miller, and R.~Peng.
\newblock A nearly-m log n time solver for sdd linear systems.
\newblock In \emph{Foundations of Computer Science (FOCS), 2011 IEEE 52nd
  Annual Symposium on}, pages 590--598, 2011.

\bibitem[Kyng and Sachdeva(2016)]{KS16}
Rasmus Kyng and Sushant Sachdeva.
\newblock Approximate {G}aussian elimination for {L}aplacians - fast, sparse,
  and simple.
\newblock \emph{2016 IEEE 57th Annual Symposium on Foundations of Computer
  Science (FOCS)}, pages 573--582, 2016.

\bibitem[Levin et~al.(2009)Levin, Peres, and Wilmer]{LPW09}
David~Asher Levin, Yuval Peres, and Elizabeth~Lee Wilmer.
\newblock \emph{Markov chains and mixing times}.
\newblock Providence, R.I. American Mathematical Society, 2009.

\bibitem[Malioutov et~al.(2006)Malioutov, Johnson, and Willsky]{MJW06}
Dmitry~M. Malioutov, Jason~K. Johnson, and Alan~S. Willsky.
\newblock Walk-sums and belief propagation in gaussian graphical models.
\newblock \emph{J. Mach. Learn. Res.}, 7:\penalty0 2031--2064, 2006.

\bibitem[Mezard et~al.(2002)Mezard, Parisi, and Zecchina]{MPZ02}
M.~Mezard, G.~Parisi, and Riccardo Zecchina.
\newblock Analytic and algorithmic solution of random satisfiability problems.
\newblock \emph{Science}, 297:\penalty0 812 -- 815, 2002.

\bibitem[Moallemi and Roy(2006)]{ConsensusProp}
C.~C. Moallemi and B.~Van Roy.
\newblock Consensus propagation.
\newblock \emph{IEEE Transactions on Information Theory}, 52\penalty0
  (11):\penalty0 4753--4766, Nov 2006.

\bibitem[Moallemi and Roy(2009)]{MVR09}
C.~C. Moallemi and B.~Van Roy.
\newblock Convergence of min-sum message passing for quadratic optimization.
\newblock \emph{IEEE Transactions on Information Theory}, 55\penalty0
  (5):\penalty0 2413--2423, 2009.

\bibitem[Moallemi and Van~Roy(2010)]{MVR10}
Ciamac~C. Moallemi and Benjamin Van~Roy.
\newblock Convergence of min-sum message-passing for convex optimization.
\newblock \emph{Information Theory, IEEE Transactions on}, 56\penalty0
  (4):\penalty0 2041--2050, 2010.

\bibitem[Montanari(2012)]{montanari_2012}
Andrea Montanari.
\newblock \emph{Graphical models concepts in compressed sensing}, pages
  394--438.
\newblock Cambridge University Press, 2012.

\bibitem[Peng and Spielman(2014)]{PS14parallel}
Richard Peng and Daniel~A. Spielman.
\newblock An efficient parallel solver for sdd linear systems.
\newblock In \emph{Proceedings of the 46th Annual ACM Symposium on Theory of
  Computing}, STOC '14, pages 333--342, New York, NY, USA, 2014. ACM.

\bibitem[Peter G.~Doyle(1984)]{10.4169/j.ctt5hh804}
J.~Laurie~Snell Peter G.~Doyle.
\newblock \emph{Random Walks and Electric Networks}, volume~22.
\newblock Mathematical Association of America, 1 edition, 1984.

\bibitem[Richardson and Urbanke(2001)]{910577}
T.~J. Richardson and R.~L. Urbanke.
\newblock The capacity of low-density parity-check codes under message-passing
  decoding.
\newblock \emph{IEEE Transactions on Information Theory}, 47\penalty0
  (2):\penalty0 599--618, 2001.

\bibitem[Rusmevichientong and Roy(2001)]{RvR01}
P.~Rusmevichientong and B.~Van Roy.
\newblock An analysis of belief propagation on the turbo decoding graph with
  gaussian densities.
\newblock \emph{IEEE Transactions on Information Theory}, 47\penalty0
  (2):\penalty0 745--765, 2001.

\bibitem[Spielman(2010)]{spielman10}
Daniel~A Spielman.
\newblock Algorithms, graph theory, and linear equations in laplacian matrices.
\newblock In \emph{Proceedings of the International Congress of
  Mathematicians}, volume~4, pages 2698--2722, 2010.

\bibitem[Spielman(2011)]{S11}
Daniel~A. Spielman.
\newblock \emph{Algorithms, graph theory, and linear equations in Laplacian
  matrices.}, pages 2698--2722.
\newblock Hackensack, NJ: World Scientific; New Delhi: Hindustan Book Agency,
  2011.

\bibitem[Spielman and Teng(2004)]{Spielman:2004}
Daniel~A. Spielman and Shang-Hua Teng.
\newblock Nearly-linear time algorithms for graph partitioning, graph
  sparsification, and solving linear systems.
\newblock In \emph{Proceedings of the Thirty-sixth Annual ACM Symposium on
  Theory of Computing}, STOC '04, pages 81--90. ACM, 2004.

\bibitem[Spielman and Teng(2014)]{ST14}
Daniel~A. Spielman and Shang-Hua Teng.
\newblock Nearly linear time algorithms for preconditioning and solving
  symmetric, diagonally dominant linear systems.
\newblock \emph{SIAM Journal on Matrix Analysis and Applications}, 35\penalty0
  (3):\penalty0 835--885, 2014.

\bibitem[Tatikonda and Jordan(2002)]{TJ02}
Sekhar~C. Tatikonda and Michael~I. Jordan.
\newblock Loopy belief propagation and {G}ibbs measures.
\newblock In \emph{Proc. UAI}, volume~18, pages 493--500, 2002.

\bibitem[Vishnoi(2013)]{V13}
Nisheeth~K. Vishnoi.
\newblock {Lx = b Laplacian Solvers and Their Algorithmic Applications}.
\newblock \emph{Foundations and Trends{\textregistered} in Theoretical Computer
  Science}, 8\penalty0 (1--2):\penalty0 1--141, 2013.

\bibitem[Weiss and Freeman(2001)]{WF01}
Yair Weiss and William~T. Freeman.
\newblock {Correctness of Belief Propagation in Gaussian Graphical Models of
  Arbitrary Topology}.
\newblock \emph{Neural Computation}, 13\penalty0 (10):\penalty0 2173--2200,
  2001.

\end{thebibliography}

\end{document}